\numberwithin{equation}{section}
\newtheorem{theorem}{Theorem}[section]
\newtheorem{claim}[theorem]{Claim}
\newtheorem{conjecture}[theorem]{Conjecture}
\newtheorem{corollary}[theorem]{Corollary}
\newtheorem{lemma}[theorem]{Lemma}
\newtheorem{proposition}[theorem]{Proposition}
\theoremstyle{definition}
\newtheorem{definition}[theorem]{Definition}
\newcommand{\<}{\langle}
\newcommand{\uh}{\upharpoonright}
\renewcommand{\>}{\rangle}
\newcommand{\dom}{\operatorname{dom}}
\newcommand{\RCA}{\operatorname{RCA}_0}
\newcommand{\ACA}{\operatorname{ACA}_0}
\newcommand{\WKL}{\operatorname{WKL}_0}
\newcommand{\ATR}{\operatorname{ATR}_0}
\newcommand{\PiCA}{\Pi^1_1\operatorname{-CA}}
\newcommand{\RT}{\operatorname{RT}}
\newcommand{\COH}{\operatorname{COH}}
\newcommand{\EM}{\operatorname{EM}}
\newcommand{\SEM}{\operatorname{SEM}}
\newcommand{\ADS}{\operatorname{ADS}}
\newcommand{\SADS}{\operatorname{SADS}}
\newcommand{\ART}{\operatorname{ART}}
\newcommand{\FS}{\operatorname{FS}}
\newcommand{\TS}{\operatorname{TS}}
\newcommand{\STS}{\operatorname{STS}}
\newcommand{\RRT}{\operatorname{RRT}}
\begin{document}

\title{The Definability Strength of Combinatorial Principles}

\keywords{Reverse mathematics; Ramsey's Theorem; Free set; Thin set; Rainbow Ramsey Theorem}
\subjclass[2010]{03B30, 03F35}

\author{Wei Wang}

\thanks{This research is partially supported by an NCET grant from MOE of China and National Fund of Philosophy and Social Science of China (Project 13\&ZD186). The author thanks Ludovic Patey for inspiring conversations. Parts of this paper were presented on the workshop Computability Theory and Foundation of Mathematics workshop in February 2014 in Tokyo, where the author received many helpful comments and interesting questions. The author thanks the organizers for their generous support.}

\address{Institute of Logic and Cognition and Department of Philosophy, Sun Yat-sen University, 135 Xingang Xi Road, Guangzhou 510275, P.R. China}
\email{wwang.cn@gmail.com}

\begin{abstract}
We introduce the definability strength of combinatorial principles. In terms of definability strength, a combinatorial principle is strong if solving a corresponding combinatorial problem could help in simplifying the definition of a definable set. We prove that some consequences of Ramsey's Theorem for colorings of pairs could help in simplifying the definitions of some $\Delta^0_2$ sets, while some others could not. We also investigate some consequences of Ramsey's Theorem for colorings of longer tuples. These results of definability strength have some interesting consequences in reverse mathematics, including strengthening of known theorems in a more uniform way and also new theorems.
\end{abstract}

\ifx\isdraft\undefined
\else
    \today
\fi

\maketitle

\section{Introduction}\label{s:Introduction}

In early ages of reverse mathematics, people found that many classical theorems in ordinary mathematics, when formulated in second order arithmetic, are equivalent to certain subsystems of second order arithmetic over the Recursive Comprehension Axiom ($\RCA$), in terms of their \emph{provability strength}. The most prominent subsystems are the so-called \emph{big five}: $\RCA$, $\WKL$, $\ACA$, $\ATR$ and $\PiCA$, with their provability strength growing strictly stronger from left to right. So the big five give us a nice ruler, against which the provability strength of many classical theorems can be precisely measured. But there are exceptions. One of these exceptions is the instance of Ramsey's Theorem for $2$-colorings of pairs, denoted by $\RT^2_2$. From Jockusch \cite{Jockusch:1972.Ramsey}, we can see that every instance of Ramsey's Theorem is a consequence of $\ACA$, the instance for colorings of triples is equivalent to $\ACA$ over $\RCA$, but $\RT^2_2$ is not implied by $\WKL$; later Seetapun \cite{Seetapun.Slaman:1995.Ramsey} proved that $\RT^2_2$ is strictly weaker than $\ACA$ over $\RCA$. Since Seetapun's work, people have found many propositions in second order arithmetic related more or less to $\RT^2_2$ whose provability strength cannot be precisely measured by the ruler. People started comparing the provability strength of these propositions to each other and have revealed a very complicated picture. For a general impression of this complicated picture, we refer the reader to the Reverse Mathematics Zoo (\url{http://rmzoo.uconn.edu/}) maintained by Dzhafarov.

Most propositions in the complicated picture are combinatorial principles and can be formulated as $\Pi^1_2$ sentences, i.e., sentences of the form $\Phi = (\forall X)(\exists Y)\varphi(X,Y)$ where $\varphi$ is arithmetic. Given such $\Phi$, each $X$ represents an \emph{instance} of the corresponding combinatorial problem, and each $Y$ satisfying $\varphi(X,Y)$ a \emph{solution}. A popular and fruitful approach to examine the provability strength of $\Phi$ is by analyzing its \emph{computability strength}. If for each set $W$ in a certain class of non-computable sets, there exists a computable $\Phi$-instance $X$ such that every solution $Y$ to the instance $X$ can code $W$ in some effective way, then we may say that $\Phi$ has strong computability strength in some sense; otherwise, $\Phi$ is considered weak. By relativization, usually we can build an $\omega$-model of a base system (e.g., $\RCA$) and a $\Pi^1_2$ proposition $\Psi$ with weak computability strength, which does not contain any solution to a computable instance of another $\Pi^1_2$ proposition $\Phi$ with strong computability strength. So we conclude that $\Psi$ does not imply $\Phi$ over the base system. The analysis of computability strength is not limited to comparing propositions with different provability strength, but it can also help us comparing propositions that have equal provability strength, as shown in \cite{Dorais.Dzhafarov.ea:ta}.

In this paper, we introduce a new kind of analysis, based on what we call the \emph{definability strength}. Roughly, if $\Phi$ is a $\Pi^1_2$ sentence then the definability strength of $\Phi$ is measured by whether solving a $\Phi$-instance helps in simplifying certain definability problem. A formal definition is given in Definition \ref{def:presv-arith}. Here we mainly apply this analysis to $\Pi^1_2$ propositions in Ramsey theory and also related propositions studied in reverse mathematics.

As the analysis of computability strength, analyzing the definability strength of $\Pi^1_2$ propositions also leads to consequences in reverse mathematics. We shall present several results of this kind here. These new results introduce more chaos to the Reverse Mathematics Zoo. However, they also give us a rather clear classification of most animals in the Zoo by definability strength. Interestingly, the analysis of definability strength also yields new proofs of known reverse mathematics theorems, which were obtained by the analysis of computability strength. Though this new analysis sounds a little coaser than that of computability strength, it gives some new proofs in a more uniform way. For example, people have proved several theorems concerning the provability strength of the Ascending or Descending Sequence principle ($\ADS$), and some of these proofs share little similarity. But through the analysis of definability strength we obtain new proofs which all depend on the definability strength of $\ADS$. Moreover, as definability appears naturally in various areas of logic and the new analysis connects combinatorial principles to definability problems, we believe that this new analysis is interesting in its own right.

Below, we briefly introduce the remaining parts of this paper:
\begin{itemize}
  \item In \S 2, we give a formal definition of the center concept of this paper and prove some general facts which will facilitate our concrete analysis.
  \item In \S 3, we study some computability notions and $\Pi^1_2$ propositions which are weak in terms of definability strength.
  \item In \S 4, we show that some other $\Pi^1_2$ propositions are strong in terms of definability strength.
  \item In \S 5, we conclude this paper with a summarization of the definability strength results, some consequences in reverse mathematics and a few remarks.
\end{itemize}

We finish this section with a few words on notation and background knowledge.

If $s$ and $t$ are two finite sequences, then we write $st$ for the concatenation of $s$ and $t$. If $x$ is a single symbol, then $\<x\>$ is the finite sequence with only one symbol $x$. The length of a finite sequence $s$ is denoted by $|s|$. If $l < |s|$ then $s \uh l$ is the initial segment of $s$ of length $l$. For $X \subseteq \omega$, $X \uh l$ is interpreted as an initial segment of the characteristic function of $X$ in the obvious way.

Recall that $[X]^r$ for $0 < r < \omega$ is the set of $r$-element subsets of $X$. We also write $[X]^\omega$ for the set of countable subsets of $X$; $[X]^{< r}, [X]^{\leq r}, [X]^{< \omega}, [X]^{\leq \omega}$ are interpreted naturally. If $X \subseteq \omega$, then elements of $[X]^{\leq \omega}$ are identified with strictly increasing sequences. We use $\sigma, \tau, \ldots$ for elements of $\omega^{< \omega}$. Under the above convention, we may perform both sequence operations and set operations on elements of $[\omega]^{<\omega}$. For example, we can write $\sigma\tau$ for $\sigma \cup \tau$, if $\sigma$ and $\tau$ are in $[\omega]^{<\omega}$ and $\max \sigma < \min \tau$; $\sigma \subseteq \tau$ if $\sigma$ is a subset of $\tau$; and $\sigma - \tau = \{x \in \sigma: x \not\in \tau\}$. We extend this convention to infinite subsets of $\omega$, so we write $\sigma X$ for $\sigma \cup X$, if $\max \sigma < \min X$ and $X \in [\omega]^{\leq \omega}$. We fix a computable bijection $\ulcorner \cdot \urcorner: \omega^{<\omega} \to \omega$ and occasionally identify $\sigma$ with $\ulcorner \sigma \urcorner$. So we may write $\sigma < \tau$ for $\ulcorner \sigma \urcorner < \ulcorner \tau \urcorner$, etc.

For two sets $X$ and $Y$, we write $X \subseteq^* Y$ if $X - Y$ is finite and $X =^* Y$ if $X \subseteq^* Y$ and $Y \subseteq^* X$.

When we work on Ramsey theory, we call a function as a \emph{coloring}. For a positive integer $c$, a \emph{$c$-coloring} is a coloring with range contained in $c = \{0, 1, \ldots, c-1\}$. A \emph{homogeneous} set of a coloring $f$ on $[\omega]^n$ is a set $H$ such that $f$ is constant on $[H]^n$. \emph{Ramsey's Theorem} states that for every positive integers $c$ and $n > 1$ every $c$-coloring of $[\omega]^n$ admits an infinite homogeneous set. $\RT^n_c$ stands for the instance of Ramsey's Theorem for fixed $n$ and $c$. Sometimes it is helpful to consider \emph{stable} colorings: a coloring $f: [\omega]^{n+1} \to \omega$ is \emph{stable} if $\lim_x f(\sigma\<x\>)$ exists for all $\sigma \in [\omega]^n$.

It is widely understood that \emph{computable} and \emph{recursive} are synonymous and so are \emph{computability} and \emph{recurion theory}. Here we prefer \emph{computable} and \emph{computability} in most cases, since \emph{computability strength} aligns better with its provability and definability counterparts. However, we prefer \emph{primitively recursive} to \emph{primitively computable}, as the former better indicates the definition both referring to.

For more notions in computability and reverse mathematics, we refer the reader to Soare \cite{Soare:1987.book} and Simpson \cite{Simpson:2009.SOSOA}. We also need some knowledge in algorithmic randomness which can be found in Downey and Hirschfeldt \cite{Downey.Hirschfeldt:2010.book}. Furthermore, we recommend a recent survey paper by Hirschfeldt \cite{Hirschfeldt:slicing} for a general picture of the reverse mathematics of Ramsey theory.

\section{Preparations}\label{s:Preparations}

Our center concept is formulated below.

\begin{definition}\label{def:presv-arith}
A set $Y$ \emph{preserves properly $\Xi$-definitions} (relative to $X$) for $\Xi$ among $\Delta^0_{n+1}, \Pi^0_n, \Sigma^0_n$ where $n > 0$, if every properly $\Xi$ (relative to $X$) set is properly $\Xi$ relative to $Y$ ($X \oplus Y$). $Y$ \emph{preserves the arithmetic hierarchy} (relative to $X$) if $Y$ preserves $\Xi$-definitions (relative to $X$) for all $\Xi$ among $\Delta^0_{n+1}, \Pi^0_n, \Sigma^0_n$ where $n > 0$.

Suppose that $\Phi = (\forall X) (\exists Y) \varphi(X,Y)$ and $\varphi$ is arithmetic. $\Phi$ \emph{admits preservation of properly $\Xi$-definitions} if for each $Z$ and $X \leq_T Z$ there exists $Y$ such that $Y$ preserves properly $\Xi$-definitions relative to $Z$ and $\varphi(X,Y)$ holds. $\Phi$ \emph{admits preservation of the arithmetic hierarchy} if for each $Z$ and $X \leq_T Z$ there exists $Y$ such that $Y$ preserves the arithmetic hierarchy relative to $Z$ and $\varphi(X,Y)$ holds.
\end{definition}

As $\Xi$-definitions relative to $X$ are trivially $\Xi$ relative to $X \oplus Y$, usually we omit the adverb \emph{properly} in the above definition and simply say that $Y$ preserves $\Xi$-definitions, etc.

If $\Phi$ admits preservation of $\Xi$-definitions then solving $\Phi$-instances does not simply a properly $\Xi$-definition. Thus we may classify $\Phi$ as a weak proposition. So the above definition captures our motivation in \S 1. In the remaining part of this section, we prove some propositions which will help us in proving preservation and non-preservation results later.

The first proposition slightly simplies Definition \ref{def:presv-arith}.

\begin{proposition}\label{pro:presv-equiv}
Suppose that $n > 0$.
\begin{enumerate}
    \item A set $Y$ preserves $\Sigma^0_n$-definitions relative to $X$ if and only if $Y$ preserves $\Pi^0_n$-definitions relative to $X$.
    \item If $Y$ preserves $\Delta^0_{n+1}$-definitions relative to $X$ then $Y$ preserves $\Pi^0_n$-definitions relative to $X$.
    \item A set $Y$ preserves $\Delta^0_{n+1}$-definitions relative to $X$ if and only if $\Delta^X_{n+1} - \Sigma^X_{n} \subseteq \Delta^{X \oplus Y}_{n+1} - \Sigma^{X \oplus Y}_{n}$.
    \item Let $\Phi = (\forall X) (\exists Y) \varphi(X,Y)$ with $\varphi$ being arithmetic. If for each $X \leq_T Z$ and for every sequence $(A_i: i < \omega)$ with no $A_i$ being $\Sigma^Z_n$ there exists $Y$ such that $\varphi(X,Y)$ holds and no $A_i$ is $\Sigma^{Z \oplus Y}_n$, then $\Phi$ admits simultaneous preservation of $\Sigma^0_n$-, $\Pi^0_n$- and $\Delta^0_{n+1}$-definitions.
\end{enumerate}
\end{proposition}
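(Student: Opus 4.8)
The plan is to treat the four items in turn, the first three being reductions among the preservation notions and the last a reformulation in terms of sequences of sets that is what the applications actually use. Item (1) is immediate from the observation that complementation exchanges the two sides of the arithmetic hierarchy at any oracle: $A$ is properly $\Sigma^0_n$ relative to $W$ exactly when $\omega - A$ is properly $\Pi^0_n$ relative to $W$. Hence the assertion ``every properly $\Sigma^0_n$ set relative to $X$ remains properly $\Sigma^0_n$ relative to $X \oplus Y$'' becomes, after replacing each set by its complement (a bijection on sets), the corresponding assertion for $\Pi^0_n$, and conversely.

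For (2), suppose $Y$ preserves $\Delta^0_{n+1}$-definitions relative to $X$ and let $A$ be properly $\Pi^0_n$ relative to $X$; I must show $A \notin \Sigma^{X \oplus Y}_n$, as $A \in \Pi^{X \oplus Y}_n$ is automatic. The device is a join trick: fix a properly $\Sigma^0_n$ set relative to $X$, say $X^{(n)}$, and put $G = A \oplus X^{(n)}$. Then $G \leq_T X^{(n)}$, so $G \in \Delta^X_{n+1}$, while a $\Sigma^X_n$ definition of $G$ would (via $k \mapsto 2k$) yield one of $A$, and a $\Pi^X_n$ definition of $G$ would (via $k \mapsto 2k+1$) yield one of $X^{(n)}$; so $G$ is properly $\Delta^0_{n+1}$ relative to $X$. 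By hypothesis $G$ is properly $\Delta^0_{n+1}$ relative to $X \oplus Y$, in particular $G \notin \Sigma^{X \oplus Y}_n$; but $X^{(n)} \in \Sigma^X_n \subseteq \Sigma^{X \oplus Y}_n$, so if $A \in \Sigma^{X \oplus Y}_n$ we would get $G \in \Sigma^{X \oplus Y}_n$, a contradiction. Combined with (1), this shows that $\Delta^0_{n+1}$-preservation entails both $\Pi^0_n$- and $\Sigma^0_n$-preservation, which I use in the remaining items.

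For (3), the backward direction is a diagram chase: if $A$ is properly $\Delta^0_{n+1}$ relative to $X$ then both $A$ and $\omega - A$ lie in $\Delta^X_{n+1} - \Sigma^X_n$, so the displayed inclusion puts both into $\Delta^{X \oplus Y}_{n+1} - \Sigma^{X \oplus Y}_n$, i.e. keeps $A$ in $\Delta^{X \oplus Y}_{n+1}$ and out of $\Sigma^{X \oplus Y}_n \cup \Pi^{X \oplus Y}_n$. For the forward direction, take $A \in \Delta^X_{n+1} - \Sigma^X_n$; then $A \in \Delta^{X \oplus Y}_{n+1}$ trivially, and $A$ is either properly $\Delta^0_{n+1}$ relative to $X$, to which I apply the hypothesis of (3), or properly $\Pi^0_n$ relative to $X$, to which I apply the $\Pi^0_n$-preservation derived above; in either case $A \notin \Sigma^{X \oplus Y}_n$.

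For (4), by (1)--(3) it suffices, given $X \leq_T Z$, to produce a single $Y$ with $\varphi(X,Y)$ that preserves $\Delta^0_{n+1}$-definitions relative to $Z$, since that same $Y$ then preserves $\Pi^0_n$- and $\Sigma^0_n$-definitions as well. By (3), and since $\Delta^Z_{n+1}$ is exactly the class of $Z^{(n)}$-computable sets, this amounts to keeping every $Z^{(n)}$-computable set that is not $\Sigma^Z_n$ out of $\Sigma^{Z \oplus Y}_n$. There are only countably many such sets, and at least one (for instance $\omega - Z^{(n)}$), so I list them as a sequence $(A_i : i < \omega)$ and invoke the hypothesis with this sequence and the given $X \leq_T Z$, obtaining $Y$ with $\varphi(X,Y)$ and no $A_i$ in $\Sigma^{Z \oplus Y}_n$; any $A \in \Delta^Z_{n+1} - \Sigma^Z_n$ is one of the $A_i$, so $A \notin \Sigma^{Z \oplus Y}_n$ while $A \in \Delta^Z_{n+1} \subseteq \Delta^{Z \oplus Y}_{n+1}$, and (3) finishes the argument. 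I expect no genuine obstacle beyond bookkeeping: the points to watch are making the join $G$ sit \emph{properly} at level $\Delta^0_{n+1}$ relative to $X$ (neither $\Sigma^0_n$ nor $\Pi^0_n$), and, in (4), diagonalizing only against the countably many $\Delta^0_{n+1}$ sets that are not $\Sigma^0_n$ --- which, by (1)--(3), is all that $\Delta^0_{n+1}$-preservation needs --- rather than against the uncountably many non-$\Sigma^0_n$ sets.
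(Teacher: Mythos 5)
Your proof is correct, and on items (1), (3) and (4) it follows essentially the paper's route: (1) by complementation, (3) by the same case split (properly $\Delta^X_{n+1}$ versus properly $\Pi^X_n$, the latter via (2)) plus the complement trick for the converse direction, and (4) by listing the countably many members of $\Delta^Z_{n+1}-\Sigma^Z_n$, invoking the hypothesis, and applying the criterion of (3). The genuine difference is in (2). The paper argues by contraposition and uses real machinery: if some properly $\Pi^X_n$ set $A$ became $\Sigma^{X\oplus Y}_n$, then $A\oplus X^{(n-1)}$ has properly c.e.\ degree relative to $X^{(n-1)}$, so by relativizing Shore's construction it computes a set $G$ that is $1$-generic relative to $X^{(n-1)}$, hence $n$-generic relative to $X$ and properly $\Delta^X_{n+1}$, yet $G\leq_T (X\oplus Y)^{(n-1)}$, which destroys $\Delta^0_{n+1}$-preservation. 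Your argument is direct and elementary: $G=A\oplus X^{(n)}$ is properly $\Delta^X_{n+1}$ (a $\Sigma^X_n$ definition of $G$ projects to one of $A$, a $\Pi^X_n$ definition projects to one of the properly $\Sigma^X_n$ set $X^{(n)}$), and since $\Sigma^{X\oplus Y}_n$ contains $X^{(n)}$ and is closed under joins, $A\in\Sigma^{X\oplus Y}_n$ would force $G\in\Sigma^{X\oplus Y}_n$, contradicting the preservation hypothesis. This bypasses the appeal to the theorem that every properly c.e.\ degree bounds a $1$-generic. What the paper's heavier route buys is a stronger form of the contrapositive --- its witness to non-preservation is an $n$-generic that collapses all the way to $\Delta^{X\oplus Y}_n$, indeed below $(X\oplus Y)^{(n-1)}$ --- but none of that extra strength is needed for the proposition or for its later applications, so your version is a legitimate simplification.
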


\begin{proof}
(1) follows trivially from that the complement of a $\Pi^X_n$ set is a $\Sigma^X_n$ set.

For (2), suppose that $A \in (\Pi^X_n - \Delta^X_n) \cap \Delta^{X \oplus Y}_n$. Then $A \leq_T (X \oplus Y)^{(n-1)}$ and $A \oplus X^{(n-1)}$ is of properly computably enumerable degree relative to $X^{(n-1)}$. By relativizing a construction of Shore (see \cite[VI.3.9]{Soare:1987.book} or \cite[Theorem 8.21.15]{Downey.Hirschfeldt:2010.book}), there exists $G \leq_T A \oplus X^{(n-1)}$ which is $1$-generic relative to $X^{(n-1)}$ and thus $n$-generic relative to $X$. So $G$ is properly $\Delta^X_{n+1}$. But
$$
    G \leq_T A \oplus X^{(n-1)} \leq_T (X \oplus Y)^{(n-1)}.
$$
Hence $G$ is $\Delta^0_{n}$ in $X \oplus Y$ and witnesses that $Y$ does not preserve $\Delta^0_{n+1}$-definitions relative to $X$.

For the only-if part of (3), suppose that $Y$ preserves $\Delta^0_{n+1}$-definitions relative to $X$. Fix an arbitrary $A \in \Delta^X_{n+1} - \Sigma^X_n$. Then either $A$ is properly $\Delta^X_{n+1}$ or $A$ is properly $\Pi^X_n$. In the former case $A$ is properly $\Delta^{X \oplus Y}_{n+1}$, while in the latter $A$ is properly $\Pi^{X \oplus Y}_n$ by (2). So in either case, $A \in \Delta^{X \oplus Y}_{n+1} - \Sigma^{X \oplus Y}_n$. For the if part, suppose that $\Delta^X_{n+1} - \Sigma^X_{n} \subseteq \Delta^{X \oplus Y}_{n+1} - \Sigma^{X \oplus Y}_{n}$ and $A \in \Delta^X_{n+1} - \Pi^X_n$. Then $\omega - A \in \Delta^X_{n+1} - \Sigma^X_{n} \subseteq \Delta^{X \oplus Y}_{n+1} - \Sigma^{X \oplus Y}_{n}$. Thus $A \in \Delta^{X \oplus Y}_{n+1} - \Pi^{X \oplus Y}_{n}$. So $Y$ preserves $\Delta^0_{n+1}$-definitions relative to $X$.

(4) follows from (1) and (3) and that there are only countable many arithmetic sets.
\end{proof}

Note that the converse of Proposition \ref{pro:presv-equiv}(2) does not hold. For example, a $\Delta^0_2$ $1$-generic $G$ does not preserve $\Delta^0_2$-definitions but preserves $\Pi^0_1$-definitions since the only computably enumerable sets computable in $G$ is the computable sets. Furthermore, every low set preserves $\Pi^0_2$-definitions. Thus non-preservation of $\Delta^0_2$-definitions does not imply non-preservation of $\Pi^0_2$-definitions.

In the light of Proposition \ref{pro:presv-equiv}(4), people may suggest to introduce a notion like \emph{preservation of non-computable-enumerability} which sounds stronger than \emph{preservation of $\Delta^0_2$ sets}. However, preservation of non-computable-enumerability literally implies that every non-computably-enumerable set is non-computably-enumerable relative to $Y$. But a non-computable $Y$ with such a property cannot be computably enumerable and thus only computable sets preserve non-computable-enumerability. On the other hand, preservation of the arithmetic hierarchy does not seem admitting an alternative like preservation of non-computable-enumerability.

Next we present a proposition that connects definability strength and computability strength. If $\varphi$ is arithmetic and $\Phi = (\forall X)(\exists Y) \varphi(X,Y)$ and if for every $Z$ and $X \leq_T Z$ and every countable sequence $(A_i: i < \omega)$ of sets not computable in $Z$ there exists $Y$ such that $\varphi(X,Y)$ and $A_i \not\leq_T Z \oplus Y$ for each $i$, then we say that \emph{$\Phi$ admits simultaneous avoidance of countably many cones}.

\begin{proposition}\label{pro:sim-avoidance-presv-P1}
If $\Phi$ is a $\Pi^1_2$ sentence that admits simultaneous avoidance of countably many cones then $\Phi$ admits preservation of $\Sigma^0_1$- and $\Pi^0_1$-definitions.
\end{proposition}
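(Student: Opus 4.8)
The plan is to collapse the whole statement to a single application of the avoidance hypothesis, exploiting the fact that there are only countably many $\Sigma^0_1$ formulas. By Proposition \ref{pro:presv-equiv}(1) it suffices to produce, for each $Z$ and each $X \leq_T Z$, a set $Y$ with $\varphi(X,Y)$ that preserves $\Sigma^0_1$-definitions relative to $Z$; preservation of $\Pi^0_1$-definitions relative to $Z$ then comes for free. So fix $Z$ and $X \leq_T Z$.

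First I would record the elementary observation that a set is properly $\Sigma^Z_1$ precisely when it is computably enumerable in $Z$ but not computable in $Z$. In particular every properly $\Sigma^Z_1$ set fails to be computable in $Z$, and since there are only countably many $\Sigma^Z_1$ sets, we may fix a countable sequence $(A_i : i < \omega)$ enumerating all of them. Each $A_i$ is not computable in $Z$, so this sequence is an admissible input for the avoidance hypothesis.

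Next I would apply the assumption that $\Phi$ admits simultaneous avoidance of countably many cones to $Z$, $X$, and $(A_i : i < \omega)$, obtaining a set $Y$ with $\varphi(X,Y)$ and $A_i \not\leq_T Z \oplus Y$ for every $i$. To see that this $Y$ preserves $\Sigma^0_1$-definitions relative to $Z$, let $A$ be any properly $\Sigma^Z_1$ set, say $A = A_i$. On the one hand, $A$ is computably enumerable in $Z$, hence computably enumerable in $Z \oplus Y$, so $A \in \Sigma^{Z \oplus Y}_1$. On the other hand, $A = A_i \not\leq_T Z \oplus Y$, so $A \notin \Delta^{Z \oplus Y}_1$. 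Therefore $A$ is properly $\Sigma^{Z \oplus Y}_1$, which is exactly what is required. Since $Z$ and $X \leq_T Z$ were arbitrary, $\Phi$ admits preservation of $\Sigma^0_1$-definitions, and by Proposition \ref{pro:presv-equiv}(1) also of $\Pi^0_1$-definitions.

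There is no genuine obstacle here; the only point worth isolating is structural. For the class $\Sigma^0_1$ the ``properly'' requirement splits cleanly into an upward-absolute half --- computable enumerability in $Z$ is automatically inherited by $Z \oplus Y$ --- and a half that is literally non-computability in $Z$, which is precisely what avoidance of cones is built to protect. This bookkeeping does not survive verbatim at higher levels of the hierarchy (where the analogue of the first half already fails), which is why the conclusion is limited to $\Sigma^0_1$ and $\Pi^0_1$ and why the stronger preservation notions require the stronger avoidance hypothesis of Proposition \ref{pro:presv-equiv}(4).
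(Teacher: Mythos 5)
Your proof is correct and is essentially the paper's own argument: both apply simultaneous cone avoidance once to the countable family of all properly definable sets at level one, observe that the level-one definition relativizes upward while cone avoidance rules out computability from $Z \oplus Y$, and use Proposition \ref{pro:presv-equiv}(1) to transfer between $\Sigma^0_1$ and $\Pi^0_1$. The only cosmetic difference is that the paper works directly with properly $\Pi^Z_1$ sets and you with properly $\Sigma^Z_1$ sets, which amounts to the same thing.
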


\begin{proof}
By Proposition \ref{pro:presv-equiv}, it suffices to prove the preservation of $\Pi^0_1$-definitions. Suppose that $\Phi = (\forall X)(\exists Y) \varphi$ and $\varphi$ is arithmetic. Fix $Z$, $X \leq_T Z$ and $(A_i: i < \omega)$ such that each $A_i$ is properly $\Pi^Z_1$. Let $Y$ be such that $\varphi(X,Y)$ and $A_i \not\leq_T Z \oplus Y$. Then every $A_i$ is properly $\Pi^0_1$ in $Z \oplus Y$.
\end{proof}

The last proposition of this section shows us how the analysis of definability strength leads to reverse mathematics consequences. If $\mathcal{S}$ is a subset of $2^\omega$ and $\Xi$ is among $\Delta^0_{n+1}, \Pi^0_n$ and $\Sigma^0_n$, then we write $\Xi(\mathcal{S})$ for the set $\{A: (\exists X \in \mathcal{S})(A \in \Xi^X)\}$. We say that \emph{$\mathcal{S}$ preserves $\Xi$-definitions (relative to $Z$)} if every properly $\Xi$ (relative to $Z$) set is properly $\Xi(\mathcal{S})$ ($\Xi(Z \oplus \mathcal{S})$ where $Z \oplus \mathcal{S} = \{Z \oplus X: X \in \mathcal{S}\}$), and \emph{$\mathcal{S}$ preserves the arithmetic hierarchy (relative to $Z$)} if $\mathcal{S}$ preserves $\Xi$-definitions (relative to $Z$) for every $\Xi$ among $\Delta^0_{n+1}, \Pi^0_n, \Sigma^0_n$ where $n > 0$. These notions can be naturally extended to $\omega$-models.

\begin{proposition}\label{pro:sep-by-presv}
Suppose that $(\Phi_i: i < \omega)$ and $\Psi$ are true $\Pi^1_2$ sentences and $\Xi$ is among $\Delta^0_{n+1}, \Pi^0_n, \Sigma^0_n$ where $n > 0$.
\begin{enumerate}
    \item All $\Phi_i$ admit preservation of $\Xi$-definitions, if and only if for each $Z$ there exists an $\omega$-model $(\omega, \mathcal{S})$ of $\RCA$ and $\bigwedge_i \Phi_i$ which contains $Z$ and preserves $\Xi$-definitions relative to $Z$.
    \item If $\Psi$ does not admit preservation of $\Xi$-definitions, then there exists $Z$ such that every $\omega$-model $(\omega, \mathcal{S})$ of $\RCA + \Psi$ containing $Z$ does not preserve $\Xi$-definitions relative to $Z$.
    \item If every $\Phi_i$ admits preservation of $\Xi$-definitions but $\Psi$ does not then
    $$
        \RCA + \bigwedge_i \Phi_i \not\vdash \Psi.
    $$
    Conversely, if $\RCA + \bigwedge_i \Phi_i \vdash \Psi$ and $\Psi$ does not admit preservation of $\Xi$-definitions, then some $\Phi_i$ does not admit preservation of $\Xi$-definitions either.
\end{enumerate}
\end{proposition}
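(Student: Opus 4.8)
\emph{The plan.} The three parts come down to one reformulation of ``preserves $\Xi$-definitions'' together with two standard $\omega$-model constructions. By Proposition~\ref{pro:presv-equiv}(1) it suffices to treat $\Xi = \Pi^0_n$ and $\Xi = \Delta^0_{n+1}$. For a set $W$ put $\mathcal{A}^W_{\Pi^0_n} = \Pi^W_n \setminus \Sigma^W_n$ and $\mathcal{A}^W_{\Delta^0_{n+1}} = \Delta^W_{n+1} \setminus \Sigma^W_n$; these are countable and satisfy $\mathcal{A}^W_\Xi \subseteq \Xi^{W'}$ whenever $W \leq_T W'$. Then Proposition~\ref{pro:presv-equiv}(3), together with the definition of preservation in the $\Pi^0_n$ case, says precisely that a single set $Y$ preserves $\Xi$-definitions relative to $Z$ if and only if $\mathcal{A}^Z_\Xi \cap \Sigma^{Z\oplus Y}_n = \varnothing$. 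I would also record the $\omega$-model analogues of Proposition~\ref{pro:presv-equiv}, proved by the same arguments --- the only non-routine ingredient being that the relativized Shore $1$-genericity construction from the proof of Proposition~\ref{pro:presv-equiv}(2) still applies when the offending $\Delta^0_n$-computation is allowed an oracle from $\mathcal{S}$ --- which give: a Turing ideal $\mathcal{S}$ with $Z \in \mathcal{S}$ preserves $\Xi$-definitions relative to $Z$ if and only if $\mathcal{A}^Z_\Xi \cap \Sigma^0_n(\mathcal{S}) = \varnothing$, where $\Sigma^0_n(\mathcal{S}) = \bigcup_{X\in\mathcal{S}}\Sigma^X_n$. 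Two further standard facts I would use throughout: arithmetic statements with parameters in $\mathcal{S}$ are absolute for the $\omega$-model $(\omega,\mathcal{S})$, and $(\omega,\mathcal{S}) \models \RCA$ exactly when $\mathcal{S}$ is a nonempty Turing ideal.

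\emph{Part (1).} For the ``only if'' direction, fix $i$, $Z$, and a $\Phi_i$-instance $X \leq_T Z$, and take the model $\mathcal{S} \ni Z$ provided by the hypothesis. Since $X \in \mathcal{S}$ and $(\omega,\mathcal{S}) \models \Phi_i$, there is $Y \in \mathcal{S}$ with $\varphi_i(X,Y)$ (by absoluteness), and since $Z \oplus Y \in \mathcal{S}$ we get $\mathcal{A}^Z_\Xi \cap \Sigma^{Z\oplus Y}_n \subseteq \mathcal{A}^Z_\Xi \cap \Sigma^0_n(\mathcal{S}) = \varnothing$; so $Y$ preserves $\Xi$-definitions relative to $Z$, and hence $\Phi_i$ admits preservation of $\Xi$-definitions. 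For the ``if'' direction I would run the usual iterated construction of an $\omega$-model: build a chain $Z = Z_0 \leq_T Z_1 \leq_T \cdots$ where, at each stage, either $Z_{s+1} = Z_s$, or the bookkeeping presents a $\Phi_i$-instance $A \leq_T Z_s$ and, using that $\Phi_i$ admits preservation of $\Xi$-definitions, I pick a solution $Y$ to $A$ preserving $\Xi$-definitions relative to $Z_s$ and set $Z_{s+1} = Z_s \oplus Y$; arrange the bookkeeping so that every $\Phi_i$-instance appearing in $\mathcal{S} := \{B : (\exists s)\, B \leq_T Z_s\}$ is eventually handled. Then $\mathcal{S}$ is a nonempty Turing ideal containing $Z$, so $(\omega,\mathcal{S}) \models \RCA$, and $(\omega,\mathcal{S}) \models \bigwedge_i\Phi_i$ by construction. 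The point is the propagation: at each stage, since $Z_s \leq_T Z_{s+1}$ and $Y$ preserves $\Xi$-definitions relative to $Z_s$, any member of $\mathcal{A}^{Z_s}_\Xi$ still lies in $\Xi^{Z_{s+1}}$ and is disjoint from $\Sigma^{Z_s\oplus Y}_n = \Sigma^{Z_{s+1}}_n$, so $\mathcal{A}^{Z_s}_\Xi \subseteq \mathcal{A}^{Z_{s+1}}_\Xi$; iterating from $\mathcal{A}^{Z_0}_\Xi = \mathcal{A}^Z_\Xi$ gives $\mathcal{A}^Z_\Xi \subseteq \mathcal{A}^{Z_s}_\Xi$, hence $\mathcal{A}^Z_\Xi \cap \Sigma^{Z_s}_n = \varnothing$, for every $s$. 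Since each $X \in \mathcal{S}$ lies below some $Z_s$, this yields $\mathcal{A}^Z_\Xi \cap \Sigma^0_n(\mathcal{S}) = \varnothing$, i.e.\ $\mathcal{S}$ preserves $\Xi$-definitions relative to $Z$.

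\emph{Part (2).} Since $\Psi$ does not admit preservation of $\Xi$-definitions, fix $Z$ and a $\Psi$-instance $X \leq_T Z$ such that no solution $Y$ to $X$ preserves $\Xi$-definitions relative to $Z$; I claim this $Z$ works. If $(\omega,\mathcal{S}) \models \RCA + \Psi$ and $Z \in \mathcal{S}$, then $X \in \mathcal{S}$, so $\mathcal{S}$ contains a solution $Y$ to $X$ (absoluteness again), and by the choice of $X$ there is $A \in \mathcal{A}^Z_\Xi \cap \Sigma^{Z\oplus Y}_n$; since $Z \oplus Y \in \mathcal{S}$ this $A$ lies in $\mathcal{A}^Z_\Xi \cap \Sigma^0_n(\mathcal{S})$, so $\mathcal{S}$ does not preserve $\Xi$-definitions relative to $Z$.

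\emph{Part (3).} This is now formal. If every $\Phi_i$ admits preservation of $\Xi$-definitions but $\Psi$ does not, take the $Z$ supplied by part~(2) and, by part~(1), an $\omega$-model $\mathcal{S} \ni Z$ of $\RCA + \bigwedge_i\Phi_i$ preserving $\Xi$-definitions relative to $Z$; by the choice of $Z$ this forces $(\omega,\mathcal{S}) \not\models \Psi$, so $(\omega,\mathcal{S})$ is an $\omega$-model of $\RCA$ and of every $\Phi_i$ but not of $\Psi$, and soundness gives $\RCA + \bigwedge_i\Phi_i \not\vdash \Psi$. The converse is just the contrapositive of this implication together with the standing hypothesis that $\Psi$ does not admit preservation of $\Xi$-definitions. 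The part I expect to take the real work is the first paragraph --- establishing the $\omega$-model form of Proposition~\ref{pro:presv-equiv}, and in particular the $\Delta^0_{n+1}$ case where Shore's construction must be relativized to an oracle from $\mathcal{S}$ --- because once ``preserves $\Xi$-definitions relative to $Z$'' has been reduced to the single disjointness condition $\mathcal{A}^Z_\Xi \cap \Sigma^0_n(\mathcal{S}) = \varnothing$, parts~(1)--(3) are routine model construction and bookkeeping.
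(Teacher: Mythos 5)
Your proof is correct, and its skeleton (pick solutions inside the given model for the ``only if'' of (1), build an increasing chain $Z=Z_0\leq_T Z_1\leq_T\cdots$ with bookkeeping for the ``if'', read off (2) from the witnessing instance, and get (3) from (1), (2) and soundness) is exactly the paper's. The difference is organizational: you funnel everything through the single invariant $\mathcal{A}^Z_\Xi\cap\Sigma^0_n(\mathcal{S})=\varnothing$, and for $\Xi=\Delta^0_{n+1}$ the direction ``$\mathcal{S}$ preserves $\Rightarrow$ disjointness'' genuinely needs the ideal-level analogue of Proposition~\ref{pro:presv-equiv}(2), i.e.\ Shore's $1$-generic construction relativized to an oracle in $\mathcal{S}$ (your sketch of that relativization is right: the offending $A$ lies in $\Delta^{Z\oplus W}_n$ for a single $W\in\mathcal{S}$, and the resulting $G\leq_T A\oplus Z^{(n-1)}$ is properly $\Delta^Z_{n+1}$ but lands in $\Delta^0_n(Z\oplus\mathcal{S})$). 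The paper avoids this extra machinery entirely by working with the definition of ``properly $\Xi(Z\oplus\mathcal{S})$'' directly: a set that is properly $\Xi$ relative to the ideal is automatically properly $\Xi$ relative to each member $Z\oplus Y$, which settles the ``only if'' of (1) and also (2) with no appeal to genericity; and in (2) one can always take the witness of non-preservation to be a properly $\Delta^Z_{n+1}$ set (replacing it by its complement if necessary), so even there the hard direction of your ideal-level equivalence is avoidable. Your iteration step in the ``if'' direction, propagating $\mathcal{A}^{Z_s}_\Xi\subseteq\mathcal{A}^{Z_{s+1}}_\Xi$ via Proposition~\ref{pro:presv-equiv}(3), is the same stage-by-stage argument the paper's construction relies on. So your route buys a clean uniform invariant at the cost of an extra (correct but unnecessary) lemma; both arguments are sound.
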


\begin{proof}
(1) Suppose that $\Phi_i = (\forall X)(\exists Y) \varphi_i(X,Y)$. 

For an arbitrary $Z$, if we have an $\omega$-model $(\omega, \mathcal{S})$ as described then for every $\Phi_i$ and every $X \leq_T Z$ we can pick $Y \in \mathcal{S}$ such that $\varphi_i(X,Y)$ holds and $Y$ preserves $\Xi$-definitions relative to $Z$. So every $\Phi_i$ admits preservation of $\Xi$-definitions.

Conversely, suppose that every $\Phi_i$ admits preservation of $\Xi$-definitions, then we can build a sequence $((\omega, \mathcal{S}_n): n < \omega)$ such that
\begin{itemize}
    \item each $\mathcal{S}_n$ is of the form $\{Y: Y \leq_T X_n\}$ for some $X_n$ and $X_0 = Z$;
    \item $X_n \leq_T X_{n+1}$ and $X_{n+1}$ preserves $\Xi$-definitions relative to $X_n$;
    \item for each $i$ and $X \in \mathcal{S}_n$ there exist $m > n$ and $Y \in \mathcal{S}_m$ such that $\varphi_i(X,Y)$.
\end{itemize}
Let $\mathcal{S} = \bigcup_n \mathcal{S}_n$. Then $(\omega, \mathcal{S})$ is as desired.

(2) Suppose that $\Psi = (\forall X)(\exists Y) \psi(X,Y)$ does not admit preservation of $\Xi$-definitions. Then there exist $Z$ and $X \leq_T Z$ such that if $\psi(X,Y)$ then $Y$ does not preserve $\Xi$-definitions relative to $Z$. Hence any $(\omega, \mathcal{S}) \models \RCA + \Psi$ containing $Z$ is a desired model.

(3) follows directly from (1) and (2).
\end{proof}

\section{Preservations}\label{s:presv}

Recall that our main purpose is to analyze the definability strength of propositions in Ramsey theory. Here we show that some $\Pi^1_2$ propositions are weak in terms of definability strength. We also show that some computability notions related to the reverse mathematics of Ramsey theory are also weak.

\subsection{Cohen genericity and randomness}

It is not surprising that both Cohen generic and random reals are weak in terms of definability strength.

\begin{proposition}\label{pro:generic-presv}
If $G$ is sufficiently Cohen generic relative to $X$ then $G$ preserves the arithmetic hierarchy relative to $X$.
\end{proposition}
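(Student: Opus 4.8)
The plan is to show that for a sufficiently Cohen generic $G$ relative to $X$, no properly $\Xi$ (relative to $X$) set becomes improperly $\Xi$ relative to $X \oplus G$, for each $\Xi$ among $\Sigma^0_n$, $\Pi^0_n$, $\Delta^0_{n+1}$. By Proposition \ref{pro:presv-equiv}(1) and (4), it suffices to treat $\Sigma^0_n$-definitions: I would verify that if $(A_i : i < \omega)$ is a sequence with no $A_i$ being $\Sigma^X_n$, then for sufficiently generic $G$ no $A_i$ is $\Sigma^{X \oplus G}_n$. Since there are only countably many arithmetic sets, a single sufficiently generic $G$ can handle all relevant $A$ simultaneously (the notion of genericity being taken relative to $X$, so that finitely much information about the generic plus oracle $X$ determines membership questions for finite conditions).

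The core computation is the standard Cohen-forcing analysis of the jump. First I would recall that for $G$ $n$-generic relative to $X$, the $n$-th jump $(X \oplus G)^{(n)}$ is computed from $X^{(n)} \oplus G$ in a uniform forcing-theoretic way: membership in $(X \oplus G)^{(k+1)}$ is decided by the generic hitting a dense set computable from $(X \oplus G)^{(k)}$, and an induction shows $(X \oplus G)^{(n)} \equiv_T X^{(n)} \oplus G$ via the forcing relation, which is itself $X^{(n-1)}$-computable at level $n$. Consequently a set $A$ that is $\Sigma^{X \oplus G}_n$ — i.e. c.e. in $(X \oplus G)^{(n-1)} \equiv_T X^{(n-1)} \oplus G$ — is enumerated by some $\Phi_e^{X^{(n-1)} \oplus G}$. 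Now fix the index $e$ and the target set $A = A_i$ and consider the collection of conditions $\sigma$ forcing "$\Phi_e^{X^{(n-1)} \oplus G}$ enumerates exactly $A$" to fail in a locatable way; the key point is that the set of $\sigma$ forcing a \emph{disagreement} at some specific element is dense below any $\sigma$ that has not already committed to agreement, unless the forced behaviour of $\Phi_e$ already computes an initial decision of $A$ from $X^{(n-1)}$ alone. Pushing this through: if no condition forces $\Phi_e^{X^{(n-1)} \oplus G} = A$, then by density a sufficiently generic $G$ makes $\Phi_e^{X^{(n-1)} \oplus G} \neq A$; and if some condition $\sigma_0$ \emph{did} force equality, then $A$ would be computable from $X^{(n-1)}$ via the forcing relation evaluated along $\sigma_0$ (search for the forced positive/negative decisions), hence $A$ would be $\Delta^X_n$, so in particular $\Sigma^X_n$, contradicting the hypothesis on the $A_i$. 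Running this argument over all pairs $(i, e)$ — countably many dense requirements — yields the conclusion for $\Sigma^0_n$, and then Proposition \ref{pro:presv-equiv} upgrades it to $\Pi^0_n$ and $\Delta^0_{n+1}$, giving preservation of the whole arithmetic hierarchy.

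I expect the main obstacle to be the bookkeeping in the forcing-relation-vs-jump induction: making precise that $(X \oplus G)^{(n-1)}$ reduces to $X^{(n-1)} \oplus G$ with the forcing relation at the appropriate level being $X^{(n-1)}$-computable (so that "forced equality" really does witness $A \in \Delta^X_n$), and being careful about the uniformity needed so that one generic $G$ works for all $n$ at once rather than requiring an $n$-dependent genericity. This is entirely classical — it is the relativized, iterated form of the fact that Cohen forcing adds no sets below the ground-model jump hierarchy — but it must be stated at the right level of generality. Once that machinery is in place, the density arguments for each requirement $(i,e)$ are routine, and the passage through Proposition \ref{pro:presv-equiv} is immediate.
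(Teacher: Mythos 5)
Your argument is correct, but it takes a different route from the paper. You first invoke the classical jump-control fact for Cohen generics, $(X \oplus G)^{(n-1)} \equiv_T X^{(n-1)} \oplus G$, so that a $\Sigma^{X\oplus G}_n$ set becomes $W_e^{X^{(n-1)}\oplus G}$ for some $e$, and then you run a Posner--Friedberg style disagreement-density argument at the $\Sigma_1$ level over the oracle $X^{(n-1)}$: either disagreement with $A$ is densely forced, or the set $W=\{k: (\exists \tau\supseteq\sigma_0)\,\Phi_e^{X^{(n-1)}\oplus\tau}(k)\!\downarrow\}$ equals $A$ and witnesses $A\in\Sigma^X_n$, a contradiction. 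The paper skips the jump bookkeeping entirely: it works directly with $\Sigma^0_n$ formulas $\varphi(Y,i)$, uses that $\{\sigma: \sigma\Vdash\varphi\}$ is $\Sigma^X_n$ when $\varphi$ is, and shows that $\{Y: A=\{i:\varphi(Y,i)\}\}$ is meager whenever $A\notin\Sigma^X_n$ (otherwise comeagerness in some $[\sigma]$ would give $A=\{i:\sigma\Vdash\varphi(G,i)\}\in\Sigma^X_n$); then Proposition \ref{pro:presv-equiv} finishes, exactly as in your last step. The paper's version is shorter, needs no appeal to the jump computation for $n$-generics (a fact whose proof is itself a forcing-definability argument of the same kind you are running), and its effectivity is what yields the refinement in Corollary \ref{cor:generic-presv} that weak $(n+1)$-genericity suffices; your version leans on a citable classical theorem and routine $\Sigma_1$ density arguments, at the cost of the uniformity/calibration issues you yourself flag. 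One small correction: from ``some $\sigma_0$ forces $\Phi_e^{X^{(n-1)}\oplus G}=A$'' you conclude $A$ is computable in $X^{(n-1)}$, hence $\Delta^X_n$; that overstates it (e.g.\ $\Phi_e$ could ignore $G$ and enumerate $X^{(n)}$). What follows is only $A=W$, i.e.\ $A$ is c.e.\ in $X^{(n-1)}$, hence $\Sigma^X_n$ --- which is exactly the contradiction you need, so the slip is harmless.
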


\begin{proof}
We use $\Vdash$ for Cohen forcing. Note that $\{\sigma \in 2^{<\omega}: \sigma \Vdash \varphi\}$ is $\Sigma^X_n$ if $\varphi$ is $\Sigma^X_n$.

Suppose that $A \not\in \Sigma^X_n$ and $\varphi$ is $\Sigma^X_n$ where $n > 0$. We claim that the following set is meager:
$$
    \mathcal{S} = \{Y: (\forall i) (i \in A \leftrightarrow \varphi(Y, i))\}.
$$
Otherwise, $\mathcal{S}$ is comeager in $\{Y: \sigma \prec Y\}$ for some $\sigma \in 2^{<\omega}$. So, $A = \{i: \sigma \Vdash \varphi(G,i)\}$. But then $A$ is $\Sigma^X_n$. Hence if $G$ is sufficiently Cohen generic relative to $X$ then $A \not\in \Sigma^{X \oplus G}_n$. 

By Proposition \ref{pro:presv-equiv}, $G$ preserves the arithmetic hierarchy relative to $X$.
\end{proof}

If we carefully examine the above proof then we can obtain the following finer result. Recall that $G$ is \emph{weakly $n$-generic} relative to $X$ if $G$ meets every $\Sigma^X_n$ dense open set of Cantor space (see \cite[\S 2.24]{Downey.Hirschfeldt:2010.book}).

\begin{corollary}\label{cor:generic-presv}
If $G$ is weakly $(n+1)$-generic relative to $X$ then $G$ simultaneously preserves $\Pi^0_n$-, $\Sigma^0_n$- and $\Delta^0_{n+1}$-definitions relative to $X$.
\end{corollary}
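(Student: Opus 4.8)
The plan is to read a quantitative refinement out of the proof of Proposition~\ref{pro:generic-presv}. By Proposition~\ref{pro:presv-equiv}(1) it is enough to establish preservation of $\Pi^0_n$- and of $\Delta^0_{n+1}$-definitions (preservation of $\Sigma^0_n$-definitions then comes for free). For $\Pi^0_n$: if $A$ is properly $\Pi^X_n$ then $A$ is automatically $\Pi^{X\oplus G}_n$, so I only need $A\notin\Sigma^{X\oplus G}_n$. For $\Delta^0_{n+1}$: by Proposition~\ref{pro:presv-equiv}(3) it suffices to show $\Delta^X_{n+1}-\Sigma^X_n\subseteq\Delta^{X\oplus G}_{n+1}-\Sigma^{X\oplus G}_n$, and since $\Delta^X_{n+1}\subseteq\Delta^{X\oplus G}_{n+1}$ this again reduces to $A\notin\Sigma^{X\oplus G}_n$ for the relevant $A$. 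So both cases follow from a single claim: if $A\notin\Sigma^X_n$ while both $A$ and $\omega-A$ are $\Sigma^X_{n+1}$, then $A\notin\Sigma^{X\oplus G}_n$. The hypothesis holds when $A$ is $\Pi^X_n$ (then $\omega-A\in\Sigma^X_n$) and when $A$ is $\Delta^X_{n+1}$.

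To prove the claim I would argue as follows. Every $\Sigma^{X\oplus G}_n$ set has the form $\{i:\varphi(G,i)\}$ for some $\Sigma^0_n$ formula $\varphi$ with a free set variable, a free number variable and parameters from $X$, and there are only countably many such $\varphi$; so it suffices to find, for each such $\varphi$, a dense $\Sigma^X_{n+1}$ set of strings (equivalently a dense open $\Sigma^X_{n+1}$ subset of Cantor space)
$$
D_\varphi=\{\sigma\in 2^{<\omega}:(\exists i)\,[(\sigma\Vdash\varphi(\cdot,i)\wedge i\notin A)\vee(\sigma\Vdash\neg\varphi(\cdot,i)\wedge i\in A)]\}.
$$
A weakly $(n+1)$-generic $G$ relative to $X$ meets every $D_\varphi$; if $\sigma\prec G$ lies in $D_\varphi$ then $\varphi(G,\cdot)$ disagrees with $A$ at the witnessing $i$ --- here I use the truth lemma, which applies since $G$, being weakly $(n+1)$-generic, is in particular $n$-generic relative to $X$. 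Hence no $\Sigma^0_n$ formula defines $A$ over $X\oplus G$.

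Density of $D_\varphi$ is the same contrapositive as in Proposition~\ref{pro:generic-presv}: if some $\tau$ had no extension in $D_\varphi$, then for every $i$ one obtains $i\notin A\iff\tau\Vdash\neg\varphi(\cdot,i)$ --- forward, because otherwise an extension of $\tau$ would force $\varphi(\cdot,i)$ and hence lie in $D_\varphi$; backward, because $\tau\Vdash\neg\varphi(\cdot,i)$ with $i\in A$ would already put $\tau$ in $D_\varphi$. Since the Cohen-forcing relation for $\Pi^0_n$ formulas is $\Pi^X_n$ uniformly, this would present $\omega-A$ as a $\Pi^X_n$ set, contradicting $A\notin\Sigma^X_n$. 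For the complexity of $D_\varphi$ I invoke the standard definability of forcing --- $\{(\sigma,i):\sigma\Vdash\varphi(\cdot,i)\}$ is $\Sigma^X_n$ and $\{(\sigma,i):\sigma\Vdash\neg\varphi(\cdot,i)\}$ is $\Pi^X_n$ --- together with the fact that under our hypothesis both $i\in A$ and $i\notin A$ are $\Sigma^X_{n+1}$ predicates of $i$; then each disjunct is $\Sigma^X_{n+1}$, and prefixing $(\exists i)$ stays in $\Sigma^X_{n+1}$.

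The one place that needs care is this complexity bookkeeping, and it is exactly what makes the conclusion speak of $\Pi^0_n$-, $\Sigma^0_n$- and $\Delta^0_{n+1}$-definitions instead of the full arithmetic hierarchy as in Proposition~\ref{pro:generic-presv}: density of $D_\varphi$ uses only $A\notin\Sigma^X_n$, but keeping $D_\varphi$ at level $\Sigma^X_{n+1}$ --- so that weak $(n+1)$-genericity is strong enough to meet it --- requires membership in $A$ to be decided by $\Sigma^X_{n+1}$ formulas, which is available precisely for $\Pi^X_n$ and $\Delta^X_{n+1}$ sets. The remaining ingredients (the precise definability of the Cohen forcing relation, the truth lemma at level $n$, the monotonicity that makes $D_\varphi$ upward closed) are routine.
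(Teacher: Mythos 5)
Your proposal is correct and follows essentially the same route as the paper: reduce via Proposition \ref{pro:presv-equiv} to showing $A\notin\Sigma^{X\oplus G}_n$ for $A\in\Delta^X_{n+1}-\Sigma^X_n$, and get this by having the weakly $(n+1)$-generic $G$ meet the dense $\Sigma^X_{n+1}$ set $D_\varphi$ extracted from the proof of Proposition \ref{pro:generic-presv}. The only difference is that you spell out details the paper leaves implicit (the density argument, the complexity bookkeeping, and forcing-implies-truth via weak $(n+1)$-genericity giving $n$-genericity), all of which are correct.
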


\begin{proof}
Fix $G$ being weakly $(n+1)$-generic relative to $X$. By Proposition \ref{pro:presv-equiv}, it suffices to show that $\Delta^X_{n+1} - \Sigma^X_n \subseteq \Delta^{X \oplus G}_{n+1} - \Sigma^{X \oplus G}_n$. Let $A \in \Delta^X_{n+1} - \Sigma^X_n$ and $\varphi$ be $\Sigma^X_n$. By the proof of Proposition \ref{pro:generic-presv}, the following set is dense and $\Sigma^X_{n+1}$:
$$
  D = \{\sigma: (\exists i)((i \in A \wedge \sigma \Vdash \neg \varphi(\dot{G}, i)) \vee (i \not\in A \wedge \sigma \Vdash \varphi(\dot{G}, i)))\}.
$$
So $G$ meets $D$ and $A \neq \{i: \varphi(G, i)\}$.
\end{proof}

We apply Proposition \ref{pro:generic-presv} to a $\Pi^1_2$ statement which connects the existence of Cohen generic and the reverse mathematics of model theory. Introduced by Hirschfeldt, Shore and Slaman \cite{Hirschfeldt.Shore.ea:2009}, $\Pi^0_1 \operatorname{G}$ asserts that for every uniformly $\Pi^0_1$ sequence $(D_n: n < \omega)$ of dense open sets in Cantor space there exists $G \in \bigcap_n D_n$. It follows immediately from Proposition \ref{pro:generic-presv} that:

\begin{corollary}\label{cor:PiG-presv}
$\Pi^0_1 \operatorname{G}$ admits preservation of the arithmetic hierarchy.
\end{corollary}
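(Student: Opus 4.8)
The plan is to deduce this corollary directly from Proposition~\ref{pro:generic-presv} together with Proposition~\ref{pro:sep-by-presv}, so essentially no new argument is needed; the only work is to verify that the $\Pi^1_2$ form of $\Pi^0_1\operatorname{G}$ meets the hypothesis of the definition of ``admits preservation of the arithmetic hierarchy'' (Definition~\ref{def:presv-arith}). First I would write $\Pi^0_1\operatorname{G}$ as $(\forall X)(\exists Y)\psi(X,Y)$, where $X$ codes a uniformly $\Pi^0_1$ sequence $(D_n : n<\omega)$ of dense open subsets of Cantor space and $\psi(X,Y)$ asserts that $Y \in \bigcap_n D_n$; one should note here that $\psi$ is arithmetic, in fact $\Pi^0_2$, so $\Pi^0_1\operatorname{G}$ is genuinely a $\Pi^1_2$ sentence in the sense required by the definitions of \S2.

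Next, fix $Z$ and $X \leq_T Z$, where $X$ codes some uniformly $\Pi^0_1$ sequence of dense open sets. I would take $G$ to be sufficiently Cohen generic relative to $Z$. The key observation is that a sufficiently Cohen generic real meets every dense open set that is arithmetic in $Z$, and in particular it meets each $D_n$ in the $X$-coded (hence $Z$-coded) sequence, since each $D_n$ is $\Pi^0_1(X)$ and therefore dense open and arithmetic in $Z$; thus $\psi(X,G)$ holds. At the same time, by Proposition~\ref{pro:generic-presv}, $G$ preserves the arithmetic hierarchy relative to $Z$ (not merely relative to $X$). Combining these two facts, for every $Z$ and every $X \leq_T Z$ we have produced a witness $G$ with $\psi(X,G)$ that preserves the arithmetic hierarchy relative to $Z$, which is exactly what ``$\Pi^0_1\operatorname{G}$ admits preservation of the arithmetic hierarchy'' means.

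The main point to be careful about — the only place where anything could go wrong — is the interplay between the parameter $Z$ in the definition of ``admits preservation'' and the parameter $X$ that actually indexes the instance: one needs the same generic $G$ both to solve the $X$-instance and to preserve the hierarchy relative to the larger set $Z$. This works precisely because ``sufficiently Cohen generic relative to $Z$'' is a strong enough genericity notion to simultaneously (i) meet the finitely-iterated-arithmetic collection of dense sets coding the $D_n$'s, which are $\Pi^0_1(X) \subseteq \Pi^0_1(Z)$ and so certainly arithmetic in $Z$, and (ii) satisfy the meagerness/forcing argument of Proposition~\ref{pro:generic-presv} carried out over $Z$. Since ``sufficiently generic'' only demands meeting countably many arithmetic-in-$Z$ requirements, no conflict arises, and the corollary follows at once. (If one wishes, the sharper Corollary~\ref{cor:generic-presv} shows that being weakly $2$-generic relative to $Z$ already suffices to meet the $\Pi^0_1(X)$ dense sets and to preserve $\Pi^0_n$-, $\Sigma^0_n$- and $\Delta^0_{n+1}$-definitions for every $n$, but invoking full sufficient Cohen genericity is cleanest here.)
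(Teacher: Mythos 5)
Your argument is correct and is essentially the paper's own proof: the paper deduces the corollary immediately from Proposition~\ref{pro:generic-presv}, exactly as you do, with sufficient Cohen genericity relative to $Z$ simultaneously meeting the $\Pi^0_1(X)$ dense sets and preserving the hierarchy relative to $Z$. (Only your final parenthetical overstates Corollary~\ref{cor:generic-presv}: weak $2$-genericity relative to $Z$ yields preservation only at the level $n=1$, so for the full arithmetic hierarchy one needs weak $(n+1)$-genericity for every $n$, i.e.\ the full sufficient genericity your main argument invokes.)
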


For the next preservation result concerning random reals, we identify $\mathbb{R}$ with Cantor space and denote Lebesgue measure by $m$. Recall that a real is sufficiently random if it avoids sufficiently many Lebesgue null sets.

\begin{proposition}\label{pro:random-presv}
If $R$ is sufficiently random relative to $X$ then $R$ preserves the arithmetic hierarchy relative to $X$.
\end{proposition}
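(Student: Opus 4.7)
The plan is to mirror the proof of Proposition \ref{pro:generic-presv}, substituting Baire category with Lebesgue measure. The central claim, analogous to the meagerness claim there, is that for each $n > 0$, each $\Sigma^X_n$ formula $\varphi$, and each set $A$ that is not $\Sigma^X_n$, the class
$$
    \mathcal{S}_{\varphi, A} = \{Y : (\forall i)(i \in A \leftrightarrow \varphi(Y, i))\}
$$
is Lebesgue null.

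I would prove this by contradiction, using Lebesgue density in place of the Baire property. Assuming $m(\mathcal{S}_{\varphi, A}) > 0$, the Lebesgue density theorem produces $\sigma \in 2^{<\omega}$ with $m(\mathcal{S}_{\varphi, A} \cap [\sigma]) > (3/4) \cdot 2^{-|\sigma|}$. For $i \in A$ one has $\{Y : \varphi(Y, i)\} \supseteq \mathcal{S}_{\varphi, A}$, so $m([\sigma] \cap \{Y : \varphi(Y, i)\}) > (3/4) \cdot 2^{-|\sigma|}$; for $i \notin A$ the two classes are disjoint, so $m([\sigma] \cap \{Y : \varphi(Y, i)\}) < (1/4) \cdot 2^{-|\sigma|}$. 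Therefore
$$
    A = \{i : m([\sigma] \cap \{Y : \varphi(Y, i)\}) > 2^{-|\sigma|-1}\}.
$$
Since $\{Y : \varphi(Y, i)\}$ is uniformly effectively $\Sigma^0_n$ in $X$, its measure is lower semicomputable in $X^{(n-1)}$, making the displayed right-hand side a $\Sigma^X_n$ set, contradicting the choice of $A$.

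To conclude preservation of the arithmetic hierarchy, I would invoke Proposition \ref{pro:presv-equiv}(4) exactly as in the Cohen case. Given a countable family $(A_j)$ of sets not $\Sigma^X_n$, the union $\bigcup_{j, \varphi} \mathcal{S}_{\varphi, A_j}$ over $\Sigma^X_n$ formulas $\varphi$ is a countable union of null classes, hence null, and captured by effective null tests at some arithmetic level relative to $X \oplus \bigoplus_j A_j$. Any $R$ random enough to pass all such tests --- the operational content of ``sufficiently random relative to $X$'' once the parameters are absorbed --- satisfies $\{i : \varphi(R, i)\} \neq A_j$ for every $j$ and $\varphi$, so no $A_j$ is $\Sigma^{X \oplus R}_n$. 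The main obstacle I anticipate is precisely this calibration: arranging a level of randomness uniform across $n$ and the ambient parameters, which parallels the analogous calibration of genericity in Proposition \ref{pro:generic-presv}.
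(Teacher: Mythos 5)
Your proposal is correct and follows essentially the same route as the paper: the paper also shows $\mathcal{S}$ is null by locating a cylinder $[\sigma]$ on which $\mathcal{S}$ has relative measure above $1/2$ and then recovering $A$ as $\{i: m\{Y: \sigma \prec Y \wedge \varphi(Y,i)\} > 2^{-|\sigma|-1}\}$, which is $\Sigma^X_n$ by Kurtz's lemma (your lower-semicomputability-in-$X^{(n-1)}$ observation is exactly that fact), and then concludes by Proposition \ref{pro:presv-equiv} together with a countable union of null classes. The only nominal slip is the citation of Proposition \ref{pro:presv-equiv}(4), which is about $\Pi^1_2$ statements; for a single random $R$ the relevant parts are (1) and (3), as in the Cohen case.
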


\begin{proof}
Suppose that $A \not\in \Sigma^X_n$ and $\varphi$ is $\Sigma^X_n$ where $n > 0$. We claim that the following set is Lebesgue null:
$$
    \mathcal{S} = \{Y: (\forall i)(i \in A \leftrightarrow \varphi(Y, i))\}.
$$
Otherwise there exists $\sigma \in 2^{<\omega}$ such that
$$
    m \{Y \in \mathcal{S}: \sigma \prec Y\} > 2^{-|\sigma|-1}.
$$
Then
$$
    A = \{i: m \{Y: \sigma \prec Y \wedge \varphi(Y; i)\} > 2^{-|\sigma|-1}\},
$$
and $A$ is $\Sigma^X_n$ by Kurtz \cite[Lemma 2.1a]{Kurtz:81.thesis} (see also \cite[Lemma 6.8.1]{Downey.Hirschfeldt:2010.book}), contradicting that $A \not\in \Sigma^X_n$. Hence if $R$ is sufficiently random relative to $X$ then $A \not\in \Sigma^{X \oplus R}_n$. 

By Proposition \ref{pro:presv-equiv}, $R$ preserves the arithmetic hierarchy relative to $X$.
\end{proof}

Similarly, if we examine the effectiveness of the above proof then we can obtain the finer preservation result below. Recall that a real is \emph{weakly $n$-random} if it avoids every $\Pi^0_n$ null set (see \cite[\S 7.2]{Downey.Hirschfeldt:2010.book}).

\begin{corollary}\label{cor:random-presv}
If $R$ is weakly $(n+1)$-random relative to $X$ then $R$ simultaneously preserves $\Sigma^0_n$-, $\Pi^0_n$- and $\Delta^0_n$-definitions relative to $X$.
\end{corollary}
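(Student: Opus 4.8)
The plan is to mimic the proof of Corollary \ref{cor:generic-presv}, replacing Cohen genericity and category-theoretic density by the measure-theoretic analogue used in the proof of Proposition \ref{pro:random-presv}. By Proposition \ref{pro:presv-equiv}(1) it suffices to handle $\Pi^0_n$-definitions together with $\Delta^0_{n+1}$-definitions, and by Proposition \ref{pro:presv-equiv}(3) it is enough to show that if $R$ is weakly $(n+1)$-random relative to $X$ then $\Delta^X_{n+1} - \Sigma^X_n \subseteq \Delta^{X \oplus R}_{n+1} - \Sigma^{X \oplus R}_n$ (the claimed $\Delta^0_n$ in the statement should read $\Delta^0_{n+1}$, matching Corollary \ref{cor:generic-presv}). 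So fix $A \in \Delta^X_{n+1} - \Sigma^X_n$ and a $\Sigma^X_n$ formula $\varphi$; I must produce, from weak $(n+1)$-randomness, a witness $i$ with $i \in A \leftrightarrow \neg \varphi(R,i)$, i.e. show $A \neq \{i : \varphi(R,i)\}$.

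The key step is to identify the relevant null set and bound its complexity. Following the proof of Proposition \ref{pro:random-presv}, for each $\sigma \in 2^{<\omega}$ and each $i$ consider the "discrepancy" event; the set $\mathcal{S}_A = \{Y : (\forall i)(i \in A \leftrightarrow \varphi(Y,i))\}$ is Lebesgue null, because if it had positive measure then by the Lebesgue density theorem there would be $\sigma$ with $m\{Y \in \mathcal{S}_A : \sigma \prec Y\} > 2^{-|\sigma|-1}$, whence $A = \{i : m\{Y : \sigma \prec Y \wedge \varphi(Y,i)\} > 2^{-|\sigma|-1}\}$, and by Kurtz's lemma (\cite[Lemma 2.1a]{Kurtz:81.thesis}, \cite[Lemma 6.8.1]{Downey.Hirschfeldt:2010.book}) this presentation is $\Sigma^X_n$ since $\varphi$ is $\Sigma^X_n$ — contradicting $A \notin \Sigma^X_n$. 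The point that must be checked carefully is that $\mathcal{S}_A$ is not merely null but is contained in a $\Pi^0_{n+1}$ (relative to $X$) null set: since $A \in \Delta^X_{n+1}$, the predicate "$i \in A \leftrightarrow \varphi(Y,i)$" is, for fixed $i$, a boolean combination of a $\Delta^X_{n+1}$ condition on $i$ and a $\Sigma^X_n$ condition on $Y$, hence $\Pi^0_{n+1}$ in $(Y,i)$; quantifying $\forall i$ keeps it $\Pi^0_{n+1}$. So $\mathcal{S}_A$ is a $\Pi^0_{n+1}$ class relative to $X$ of measure zero, and any $R$ weakly $(n+1)$-random relative to $X$ avoids it, giving some $i$ with $i \in A \not\leftrightarrow \varphi(R,i)$.

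From that single witness the conclusion is routine: for every $\Sigma^{X}_n$ formula $\varphi$ we have shown $A \neq \{i : \varphi(R,i)\}$ and, applying the same argument to $\omega - A$ (which is also in $\Delta^X_{n+1} - \Pi^X_n$, hence its complement avoids $\Sigma^X_n$ in the relevant way), $A \neq \{i : \neg\varphi(R,i)\}$ as well; together these say $A \notin \Sigma^{X\oplus R}_n$. Since $A \in \Delta^X_{n+1} \subseteq \Delta^{X\oplus R}_{n+1}$ trivially, we get $A \in \Delta^{X\oplus R}_{n+1} - \Sigma^{X\oplus R}_n$, which by Proposition \ref{pro:presv-equiv}(3) is exactly preservation of $\Delta^0_{n+1}$-definitions; preservation of $\Sigma^0_n$- and $\Pi^0_n$-definitions then follows from parts (1) and (2) of that proposition.

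I expect the main obstacle to be the complexity bookkeeping in the second paragraph: verifying that the null set witnessing the claim can be taken to be genuinely $\Pi^0_{n+1}$ relative to $X$ (so that weak $(n+1)$-randomness, which only defeats $\Pi^0_{n+1}$ null sets, suffices), rather than of some higher level, and checking that Kurtz's lemma applies at level $n$ uniformly in the way the proof of Proposition \ref{pro:random-presv} uses it. Everything else is a direct transcription of the Cohen-generic argument in Corollary \ref{cor:generic-presv} into the measure setting.
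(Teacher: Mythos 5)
Your proposal is correct and follows essentially the same route as the paper: reduce via Proposition \ref{pro:presv-equiv} to showing $\Delta^X_{n+1} - \Sigma^X_n \subseteq \Delta^{X\oplus R}_{n+1} - \Sigma^{X\oplus R}_n$, note that the set $\mathcal{S} = \{Y : (\forall i)(i \in A \leftrightarrow \varphi(Y,i))\}$ from the proof of Proposition \ref{pro:random-presv} is null and, since $A \in \Delta^X_{n+1}$, is $\Pi^0_{n+1}$ relative to $X$, and let weak $(n+1)$-randomness avoid it; you also rightly read the statement's $\Delta^0_n$ as $\Delta^0_{n+1}$. The closing detour through $\omega - A$ is unnecessary (and its justification would need $\omega - A \notin \Sigma^X_n$, which is not given), but since $A \neq \{i : \varphi(R,i)\}$ for every $\Sigma^X_n$ formula $\varphi$ already yields $A \notin \Sigma^{X\oplus R}_n$, this does not affect correctness.
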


\begin{proof}
Fix $R$ being weakly $(n+1)$-random relative to $X$. As in the proof of Corollary \ref{cor:generic-presv}, it suffices to show that $\Delta^X_{n+1} - \Sigma^X_n \subseteq \Delta^{X \oplus R}_{n+1} - \Sigma^{X \oplus R}_n$. Let $A \in \Delta^X_{n+1} - \Sigma^X_n$ and $\varphi$ be $\Sigma^X_n$. By the proof of Proposition \ref{pro:random-presv}, the following set is $\Pi^X_{n+1}$ and null:
$$
    \mathcal{S} = \{Y: (\forall i)(i \in A \leftrightarrow \varphi(Y, i))\}.
$$
So $R \not\in \mathcal{S}$ and $A \neq \{i: \varphi(R, i)\}$.
\end{proof}

From the short proofs above, the reader may have found that the definability of forcing is a key to preservation.

\subsection{Weak K\"{o}nig's Lemma}

We devote this section to the proof of the following preservation theorem for $\WKL$.

\begin{theorem}\label{thm:WKL-presv}
$\WKL$ admits preservation of the arithmetic hierarchy.
\end{theorem}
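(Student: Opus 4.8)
The plan is to verify Definition~\ref{def:presv-arith} directly, exploiting the feature that makes $\WKL$ special — compactness — which puts $\Pi^0_1$-class forcing at our disposal. Fix $Z$ and a $\WKL$-instance $X\le_T Z$. If $X$ does not code an infinite subtree of $2^{<\omega}$ then $Y=\emptyset$ is a legitimate solution and preserves everything trivially, so assume $X=T$ is an infinite $Z$-computable binary tree and we must produce an infinite path $P$ of $T$ preserving the arithmetic hierarchy relative to $Z$. By the argument behind Proposition~\ref{pro:presv-equiv}(4) — whose proof reduces preservation of properly $\Sigma^0_n$-, $\Pi^0_n$- and $\Delta^0_{n+1}$-definitions relative to $Z$ to keeping sets out of $\Sigma^0_n$ relative to $Z\oplus P$, and which adapts to let the single countable family straddle all levels at once — it is enough to find $P$ so that for every $n>0$ no set that is arithmetic in $Z$ and not $\Sigma^Z_n$ is $\Sigma^{Z\oplus P}_n$. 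Enumerating, for each $n$, such sets as $(A^n_i)_{i<\omega}$, this becomes the countable list of requirements $\mathcal R^n_{i,e}$: $W_e^{(Z\oplus P)^{(n-1)}}\neq A^n_i$.

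I would meet all the $\mathcal R^n_{i,e}$ by a single forcing whose conditions are the infinite $Z$-computable binary subtrees of $T$ (equivalently, the nonempty $\Pi^{0,Z}_1$ subclasses of $[T]$), ordered by inclusion, and build $P$ as a suitably generic path of $T$. The mechanism is already transparent at $n=1$, where $\mathcal R^1_{i,e}$ asks $W_e^{Z\oplus P}\neq A$ for $A:=A^1_i\notin\Sigma^Z_1$, and where the argument recovers Proposition~\ref{pro:sim-avoidance-presv-P1} as a special case. Given a condition $S$, set $B_S:=\{k:\text{every path }P\text{ of }S\text{ has }k\in W_e^{Z\oplus P}\}$; by compactness $k\in B_S$ iff at some level $l$ every node $\sigma$ of $S$ of length $l$ already satisfies $k\in W_{e,l}^{Z\oplus\sigma}$, so $B_S$ is $\Sigma^Z_1$. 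Now either $B_S\not\subseteq A$, and choosing $k\in B_S\setminus A$ every path of $S$ has $k\in W_e^{Z\oplus P}\setminus A$, so $S$ itself forces $W_e^{Z\oplus P}\neq A$; or $A\not\subseteq B_S$, and choosing $k\in A\setminus B_S$ the class $\{P\in[S]:k\notin W_e^{Z\oplus P}\}$ is a nonempty $\Pi^{0,Z}_1$ subclass of $[S]$, and restricting $S$ to it forces $k\notin W_e^{Z\oplus P}$ while $k\in A$, so $W_e^{Z\oplus P}\neq A$ is again forced; or $B_S=A$, whence $A\in\Sigma^Z_1$, contradicting the choice of $A$. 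So the third case never happens and $\mathcal R^1_{i,e}$ can be forced below every condition.

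For $n>1$ the same three-case dichotomy is run, but now the statement that $k\in W_e^{(Z\oplus P)^{(n-1)}}$ is $\Sigma^{0,Z\oplus P}_n$ rather than $\Sigma^{0,Z\oplus P}_1$; the notion of a condition $S$ forcing it, which at level $1$ just meant that all paths of $S$ satisfy it, must be replaced by a genuine syntactic forcing relation, and a condition forcing $\mathcal R^n_{i,e}$ then constrains only sufficiently generic paths through it. The crucial input — and, as the short proofs of the previous subsection already signal, the engine behind preservation proofs of this kind — is that the forcing relation $S\Vdash\varphi$, for $\varphi$ a $\Sigma^0_n$ formula in $Z\oplus G$, is itself $\Sigma^{0,Z}_n$; this needs the poset and the relation to be set up with care, the subconditions witnessing the various decisions being drawn from a $Z$-computably presented family and being manifestly infinite. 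Granting it, $B_S:=\{k:S\Vdash k\in W_e^{(Z\oplus P)^{(n-1)}}\}$ is $\Sigma^Z_n$, the density of conditions deciding a given $\Sigma^0_n$ statement supplies the refinement used in the second case (where $k\notin B_S$ yields a subcondition forcing $k\notin W_e^{(Z\oplus P)^{(n-1)}}$), and the third case again forces $A^n_i=B_S\in\Sigma^Z_n$, a contradiction. Interleaving all the $\mathcal R^n_{i,e}$, together with the routine bookkeeping that pins $P$ down to a single path, a sufficiently generic $P$ is an infinite path of $T$ along which, for every $n$, no arithmetic-in-$Z$, non-$\Sigma^Z_n$ set is $\Sigma^{Z\oplus P}_n$; as explained this gives that $P$ preserves the arithmetic hierarchy relative to $Z$, so $\WKL$ admits preservation of the arithmetic hierarchy. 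The hard part will be the $n>1$ step — establishing the $\Sigma^{0,Z}_n$-definability of the forcing relation uniformly in $n$, i.e., controlling the jumps $(Z\oplus P)^{(n-1)}$, along which membership in $W_e^{(Z\oplus P)^{(n-1)}}$ is no longer witnessed by any finite piece of $P$ — while the trivial instance, the reduction via Proposition~\ref{pro:presv-equiv}(4), the $n=1$ dichotomy, and the combinatorics of countably many requirements are routine.
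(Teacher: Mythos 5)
Your high-level plan coincides with the paper's: force with subtrees of $T$, prove that the forcing relation for $\Sigma^0_n$ formulas is itself $\Sigma^{0,Z}_n$, and diagonalize by attaching to each condition a $\Sigma^Z_n$ set that cannot equal the given non-$\Sigma^Z_n$ set $A$. But the step you explicitly ``grant'' is exactly where the substance of the theorem lies, and your chosen poset does not obviously deliver it. If conditions are the infinite $Z$-computable subtrees of $T$ indexed in the natural way, then recognizing a condition is a $\Pi^0_2(Z)$ matter (totality plus infinitude), so the clause ``$S\Vdash\neg\psi$ iff no subcondition forces $\psi$'' is a universal quantifier over a $\Pi^0_2$-described family; already at $n=2$ this computes as $\Pi^0_3$ rather than $\Pi^0_2$, and the induction you need collapses. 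The paper's proof consists precisely of the devices that repair this: pass to primitively recursive subtrees, so the candidate subconditions form a computable listing in which the non-conditions are only $\Sigma^0_1$ (Lemmata \ref{lem:Pi01-forcing-rec-subtrees} and \ref{lem:Pi01-forcing-Pr}); give explicit combinatorial forcing clauses at the $\Sigma^0_1/\Pi^0_1$ level that never quantify over conditions, and define forcing of a negation as ``no extension forces'' (Definition \ref{def:Pi01-forcing-relation}); then prove the truth lemma and the level-by-level definability (Lemmata \ref{lem:Pi01-forcing-completeness} and \ref{lem:Pi01-forcing-definability}). There is no computably presented family of ``manifestly infinite'' computable subtrees, so some such device is unavoidable; your proposal names the difficulty but supplies none of this.

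Separately, the $n>1$ diagonalization you do spell out would fail as stated. From $k\in A\setminus B_S$, i.e.\ $S\not\Vdash k\in W_e^{(Z\oplus P)^{(n-1)}}$, density of deciding conditions does not produce a subcondition forcing $k\notin W_e^{(Z\oplus P)^{(n-1)}}$: the deciding extension may well force membership, and since $k\in A$ this yields no disagreement at $k$. The correct move (the paper's Lemma \ref{lem:Pi01-forcing-presv}) is to replace $B_S$ by $U=\{k:(\exists S'\leq S)\,S'\Vdash\varphi(G,x)[k]\}$: if $k\in U\setminus A$ take the witnessing extension; if $k\in A\setminus U$ then, by the ``no extension forces'' clause, $S$ itself already forces the negation; and $U=A$ is impossible because $U$ is $\Sigma^Z_n$ --- which again uses the $\Sigma^0_1$-ness of the set of non-conditions. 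So the missing definability lemma and the flawed second case are two faces of the same gap: without the careful choice of conditions and forcing clauses, neither the complexity bound on the forcing relation nor the dichotomy argument can be completed.
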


As in the last subsection, we prove the above theorem by proving the definability of a forcing. We fix an $X$ and an $X$-computable infinite binary tree $T_0$ and need to find $G \in [T_0]$ preserving the arithmetic hierarchy relative to $X$. As our proof will be relativizable, we may assume that $X$ is computable. To build $G$, we build a sequence of computable trees $(T_i: i < \omega)$ such that each $T_{i+1}$ is a subtree of $T_i$, and then obtain $G$ as a member of $\bigcap_i [T_i]$. We introduce a forcing notion and require that each $T_i$ is a condition forcing a fragment of the preservation requirement.

\begin{definition}
If $T$ is a computable subtree of $2^{<\omega}$, then a \emph{primitively recursive subtree} $S$ of $T$ is a tree of the form
$$
    S = T \cap R
$$
where $R$ is a primitively recursive subset of $2^{<\omega}$. Let $\operatorname{Pr}(T)$ denote the set of all primitively recursive subtrees of $T$. Note that a tree in $\operatorname{Pr}(T)$ could be finite.

Let $\mathbb{P}$ be the set of infinite primitively recursive subtrees of $T_0$. A tree in $\mathbb{P}$ is a \emph{forcing condition}. For $S$ and $T$ in $\mathbb{P}$, $S \leq T$ if and only if $S$ is a subtree of $T$.
\end{definition}

The following lemma shows that moving from computable trees to primitively recursive trees costs nothing as long as we concern only the set of infinite paths of a tree.

\begin{lemma}\label{lem:Pi01-forcing-rec-subtrees}
For every computable tree $S$, there exists a primitively recursive tree $T$ such that $[S] = [T]$.
\end{lemma}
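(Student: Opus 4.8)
The plan is to build $T$ by a direct, clocked construction from a Turing machine for $S$, where the clock is tied to the \emph{current level} of the tree. Fix a Turing machine $M$ which, on input (a code for) a string $\tau \in 2^{<\omega}$, halts and outputs $1$ if $\tau \in S$ and $0$ otherwise; such an $M$ exists since $S$ is computable. I would then put $\sigma$ into $T$ exactly when no $\tau \preceq \sigma$ has $M(\tau)$ halting within $|\sigma|$ stages with output $0$. Since the behaviour of $M$ on an input during a given finite number of stages is primitively recursive in that input together with the number of stages, and since ``$\tau \preceq \sigma$'' is a bounded quantifier, this makes $T$ a primitively recursive subset of $2^{<\omega}$.

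Next I would check that $T$ is a tree by showing its complement is closed under extension: if $\sigma \notin T$, witnessed by some $\tau \preceq \sigma$ with $M(\tau)$ halting with output $0$ within $|\sigma|$ stages, then for any $\sigma' \succeq \sigma$ the same $\tau$ witnesses $\sigma' \notin T$, because $\tau \preceq \sigma'$ and $|\sigma| \le |\sigma'|$. Hence $T$ is downward closed, and being a primitively recursive downward-closed subset of $2^{<\omega}$ it is a primitively recursive tree.

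The remaining step is $[T] = [S]$. For $[S] \subseteq [T]$: if $X \in [S]$ then every $X \uh k$ lies in $S$, so $M(X \uh k)$ never outputs $0$; therefore for every $n$ no initial segment $\tau \preceq X \uh n$ can make $M$ halt with output $0$ within $n$ stages, so $X \uh n \in T$. For $[T] \subseteq [S]$: suppose $X \in [T]$ and fix $k$; let $s$ be the number of stages $M$ uses on input $X \uh k$, and pick $n \ge \max(k,s)$. From $X \uh n \in T$ and $X \uh k \preceq X \uh n$ we learn that $M(X \uh k)$ does not halt with output $0$ within $n$ stages; since it does halt within $s \le n$ stages and its output is in $\{0,1\}$, it must output $1$, i.e. $X \uh k \in S$. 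As $k$ was arbitrary, $X \in [S]$.

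The one genuine subtlety, and the point to get right, is the interaction between downward closure and primitive recursiveness: the clock must depend only on the level $|\sigma|$, not on $\sigma$ as a number and not on the (possibly non-primitively-recursive) halting time of $M$, and it is precisely this monotone choice ``run $M$ for $|\sigma|$ stages at level $|\sigma|$'' that simultaneously makes the complement extension-closed and still captures every path of $S$. I expect everything else to be routine; in particular, because the trees are binary no separate argument about finite stumps is needed, since $[T]=[S]$ is verified levelwise.
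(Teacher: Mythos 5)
Your proof is correct and takes essentially the same route as the paper: the paper writes $S = \{\sigma: (\forall \vec{x})\,\neg\psi(\sigma,\vec{x})\}$ with $\psi$ a $\Sigma^0_0$ matrix and sets $T = \{\sigma: (\forall \vec{x})(\max\vec{x} < |\sigma| \to \neg\psi(\sigma,\vec{x}))\}$, which is exactly your level-clocked bounding of the witness search, realized there by bounded quantifiers instead of step-bounded runs of $M$. Your additional clause quantifying over all $\tau \preceq \sigma$, which secures downward closure of $T$, addresses a point the paper's one-line construction leaves implicit, so your writeup is if anything slightly more careful.
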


\begin{proof}
Fix a computable tree $S$. Let $\psi(\sigma; \vec{x})$ be a $\Sigma^0_0$ formula such that
$$
    S = \{\sigma: (\forall \vec{x}) \neg \psi(\sigma, \vec{x})\}.
$$
Let
$$
    T = \{\sigma: (\forall \vec{x}) (\max \vec{x} < |\sigma| \to \neg \psi(\sigma, \vec{x}))\}.
$$
Then $T$ is a primitively recursive tree and $[S] = [T]$.
\end{proof}

So we assume that every computable tree appearing below is primitively recursive. But the main advantage of using primitively recursive trees is that we can better control the complexity.

\begin{lemma}\label{lem:Pi01-forcing-Pr}
Suppose that $T \in \mathbb{P}$. Then $\operatorname{Pr}(T) - \mathbb{P}$ can be identified with a $\Sigma^0_1$ set.
\end{lemma}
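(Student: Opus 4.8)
The statement to prove is Lemma \ref{lem:Pi01-forcing-Pr}: for $T \in \mathbb{P}$, the set $\operatorname{Pr}(T) - \mathbb{P}$ can be identified with a $\Sigma^0_1$ set. Here $\mathbb{P}$ consists of the \emph{infinite} primitively recursive subtrees of $T_0$, so $\operatorname{Pr}(T) - \mathbb{P}$ is the collection of primitively recursive subtrees of $T$ that happen to be \emph{finite}. The plan is to fix a computable enumeration (an index set) of the primitively recursive subsets $R$ of $2^{<\omega}$, so that a primitively recursive subtree $S = T \cap R$ of $T$ is coded by the index $e$ of $R$ together with $T$ (which is fixed, hence computable). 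Since a primitively recursive function has an explicit bound on its running time, for each index $e$ the predicate ``$\sigma \in R_e$'' is decidable uniformly in $e$ and $\sigma$, and moreover uniformly so; the point is that membership in $\operatorname{Pr}(T)$ is automatic (any $e$ gives \emph{some} primitively recursive subtree $T \cap R_e$, after intersecting with the downward-closed $T$ and taking downward closure if necessary), so the only content is detecting finiteness.

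First I would make precise that a primitively recursive subtree of $T$ is represented by a natural number index, and that from the index one can primitively-recursively decide, for any $\sigma$, whether $\sigma$ lies in the subtree. Second, I would observe that a finitely branching tree (in particular a subtree of $2^{<\omega}$) is finite if and only if it has no node at some level $\ell$, i.e. if and only if there exists $\ell$ such that every $\sigma$ of length $\ell$ fails to be in the subtree. Checking, for a fixed $\ell$, whether there is a string of length $\ell$ in the subtree is a bounded ($\Sigma^0_0$) question, because there are only $2^\ell$ candidate strings and membership is decidable. Hence ``the subtree coded by $e$ is finite'' is the statement $(\exists \ell)(\forall \sigma \in 2^\ell)(\sigma \notin T \cap R_e)$, which is $\Sigma^0_1$ in $e$ (the inner matrix being $\Sigma^0_0$). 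This exhibits $\operatorname{Pr}(T) - \mathbb{P}$, under the identification with indices, as a $\Sigma^0_1$ set, completing the argument.

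The one subtlety — and the step I expect to require the most care — is the choice of coding: I must ensure that every infinite primitively recursive subtree of $T$ actually receives an index in my enumeration, and conversely that from an arbitrary index $e$ I genuinely get a subtree of $T$ (a downward-closed subset of $2^{<\omega}$ contained in $T$), rather than an arbitrary primitively recursive set. This is handled by defining, from a primitively recursive $R_e$, the set $R'_e$ of strings all of whose initial segments lie in $R_e \cap T$; this $R'_e$ is again primitively recursive (only boundedly many initial segments to check), is downward closed, and is contained in $T$, so $T \cap R'_e$ is a bona fide primitively recursive subtree of $T$, and every primitively recursive subtree of $T$ arises this way. With this massaging in place, ``$e$ codes an infinite subtree'' is the complement within the index set of the $\Sigma^0_1$ finiteness condition above, which is exactly what the lemma asserts; I will phrase the final sentence as identifying $\operatorname{Pr}(T) - \mathbb{P}$ with $\{e : (\exists \ell)(\forall \sigma \in 2^\ell)\ \sigma \notin T \cap R'_e\}$ and note it is $\Sigma^0_1$. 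Everything here relativizes to $X$, matching the paper's standing assumption that $X$ (hence $T_0$, hence $T$) is computable, so no extra bookkeeping about $X$ is needed beyond what is already fixed.
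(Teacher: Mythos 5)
Your proposal is correct and takes essentially the same route as the paper: enumerate the primitively recursive subtrees of $T$ effectively and express finiteness as $(\exists m)(\forall \sigma \in 2^m)(\sigma \notin S_n)$, a $\Sigma^0_1$ condition on the index. The only difference is that the paper simply fixes a computable enumeration of $\operatorname{Pr}(T)$ without comment, while you spell out the index-massaging (replacing $R_e$ by the downward-closed $R'_e$) that justifies its existence.
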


\begin{proof}
Fix a computable enumeration $(S_n: n < \omega)$ of $\operatorname{Pr}(T)$.
\begin{align*}
    S_n \not\in \mathbb{P} \Leftrightarrow S_n \text{ is finite} \Leftrightarrow (\exists m) (\forall \sigma \in 2^m) (\sigma \not\in S_n).
\end{align*}
So $\operatorname{Pr}(T) - \mathbb{P}$ can be identified with the $\Sigma^0_1$ set $\{n: S_n \text{ is finite}\}$.
\end{proof}

We define the forcing relation below. Our forcing language is the first order language of arithmetic augmented by a unary predicate $G$.

\begin{definition}\label{def:Pi01-forcing-relation}
Let $T \in \mathbb{P}$.
\begin{enumerate}
    \item If $\psi(G, \vec{x})$ is $\Sigma^0_0$, then
        $$
            T \Vdash (\exists \vec{x}) \psi \Leftrightarrow (\exists n) (\forall \sigma \in 2^n \cap T) (\exists \vec{m}) (\max \vec{m} < n \wedge \psi(\sigma, \vec{x}) [\vec{m}])
        $$
        and
        $$
	    T \Vdash \neg (\exists \vec{x}) \psi \Leftrightarrow (\forall \sigma \in T) (\forall \vec{n}) (\max \vec{n} < |\sigma| \to \neg \psi(\sigma, \vec{x})[\vec{n}]).
        $$
    \item Suppose that $\varphi(G)$ is not of the above forms.
    \begin{enumerate}
	\item If $\varphi(G)$ is of the form $(\exists x) \psi(G; x)$, then
        $$
            T \Vdash \varphi(G) \Leftrightarrow (\exists n) (T \Vdash \psi(G, x)[n]).
        $$
        \item If $\varphi(G)$ is of the form $\psi \wedge \theta$ then
        $$
	    T \Vdash \varphi(G) \Leftrightarrow T \Vdash \psi \text{ and } T \Vdash \theta.
        $$
	\item If $\varphi(G)$ is of the form $\neg \psi(G)$, then
        $$
            T \Vdash \varphi(G) \Leftrightarrow S \not\Vdash \psi(G) \text{ for all } S \leq T,
        $$
        where $S \not\Vdash \psi(G)$ means $\neg (S \Vdash \psi(G))$.
    \end{enumerate}
\end{enumerate}
\end{definition}

Definition \ref{def:Pi01-forcing-relation} appears slightly different from usual forcing definitions. The difference is introduced for definability purpose. But the next two lemmata show that this difference is superficial.

\begin{lemma}\label{lem:Pi01-forcing-density}
For every arithmetic formula $\varphi$, the following set is dense
$$
    \{T \in \mathbb{P}: T \Vdash \varphi \text{ or } T \Vdash \neg \varphi\}.
$$
\end{lemma}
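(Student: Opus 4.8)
The plan is to prove Lemma \ref{lem:Pi01-forcing-density} by induction on the structure of the arithmetic formula $\varphi$, following the recursive shape of Definition \ref{def:Pi01-forcing-relation}. The base case is a $\Sigma^0_0$-matrix statement $(\exists \vec{x})\psi$ with $\psi \in \Sigma^0_0$. Given any $T \in \mathbb{P}$, I consider whether there is some $n$ and some witness below $n$ making $\psi$ true on every string of length $n$ in $T$; if so $T$ itself forces $(\exists\vec{x})\psi$. If not, I claim $T \Vdash \neg(\exists\vec{x})\psi$: indeed, the failure of the forcing-$\exists$ clause for every $n$, plus the fact that $T$ is infinite and hence has strings of every length, should let me verify the universal-over-$\sigma$ condition defining $T \Vdash \neg(\exists\vec{x})\psi$ — the point being that if some $\sigma \in T$ of length $|\sigma|$ had a witness $\vec{n}$ below $|\sigma|$ with $\psi(\sigma,\vec{x})[\vec{n}]$, then going one level up to a common length $n > |\sigma|$ at which all length-$n$ extensions are enumerated would still only give a witness for the extensions of that one $\sigma$, so I must be slightly careful here and instead argue: if $T \not\Vdash \neg(\exists\vec{x})\psi$ then some $\sigma \in T$ witnesses $\psi$ with a bounded witness, and then passing to the primitively recursive subtree $\{\tau \in T : \tau \subseteq \sigma \text{ or } \sigma \subseteq \tau\}$ — which is infinite because $T$ is and $\sigma \in T$ — gives a condition $S \leq T$ with $S \Vdash (\exists\vec{x})\psi$. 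Thus the dense set is met either way; in fact this shows the stronger statement that below every $T$ there is a condition deciding $\varphi$, and I would state the proof so as to give this.

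For the inductive step I handle the three clauses of Definition \ref{def:Pi01-forcing-relation}(2). For $\varphi = (\exists x)\psi(G;x)$: given $T$, if some instance $\psi(G,x)[n]$ is forced by some $S \le T$ then that $S$ forces $\varphi$; otherwise no $S \le T$ forces any instance, so (using the inductive hypothesis applied to each $\psi(G,x)[n]$, or rather directly from the definition) every $S \le T$ fails to force $\varphi$, which is exactly $T \Vdash \neg\varphi$ by clause (2c). For $\varphi = \psi \wedge \theta$: first go below $T$ to a condition $T_1 \le T$ deciding $\psi$ (inductive hypothesis); if $T_1 \Vdash \neg\psi$ then $T_1 \Vdash \neg(\psi\wedge\theta)$ — here I need the easy observation that $S \Vdash \psi\wedge\theta$ implies $S \Vdash \psi$, so no $S \le T_1$ can force $\psi\wedge\theta$; if $T_1 \Vdash \psi$, then go below $T_1$ to $T_2 \le T_1$ deciding $\theta$, and $T_2$ decides $\psi\wedge\theta$ (using monotonicity of $\Vdash \psi$ downward, which itself is a routine sub-induction, or can be folded in). For $\varphi = \neg\psi$: go below $T$ to $T_1 \le T$ deciding $\psi$; if $T_1 \Vdash \neg\psi$ we are done, and if $T_1 \Vdash \psi$ then $T_1 \Vdash \neg\neg\psi$ since no $S \le T_1$ forces $\neg\psi$ (because any such $S$ would satisfy $S \not\Vdash \psi$, contradicting downward persistence of $T_1 \Vdash \psi$).

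The main obstacle — and the one genuinely new ingredient beyond a textbook density argument — is establishing downward persistence: if $S \le T$ and $T \Vdash \varphi$ then $S \Vdash \varphi$. This is needed repeatedly above (for the $\wedge$ clause and the $\neg$ clause), and it must itself be proved by induction on $\varphi$ simultaneously with, or prior to, the density claim. For the $\Sigma^0_0$-matrix base case persistence of $T \Vdash (\exists\vec{x})\psi$ under shrinking is immediate (fewer strings of length $n$ to check), and persistence of $T \Vdash \neg(\exists\vec{x})\psi$ is immediate (fewer $\sigma \in T$); the $(\exists x)$ and $\wedge$ clauses propagate persistence trivially, and the $\neg$ clause propagates it because the definition of $T \Vdash \neg\psi$ quantifies over all $S \le T$. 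So the clean way to organize the writeup is: first a short lemma (or an inline remark) that $\Vdash$ is downward persistent for all arithmetic $\varphi$, proved by induction on $\varphi$; then the density proof as above. I would also remark that the argument actually shows the nominally stronger statement that $\{T \in \mathbb{P} : T \Vdash \varphi\} \cup \{T : T \Vdash \neg\varphi\}$ is dense below every condition, i.e. every condition has an extension deciding $\varphi$, since that is what the induction naturally produces and it is what subsequent lemmata will want.
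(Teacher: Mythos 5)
In your base case the step ``passing to the primitively recursive subtree $\{\tau \in T : \tau \subseteq \sigma \text{ or } \sigma \subseteq \tau\}$ --- which is infinite because $T$ is and $\sigma \in T$'' is not valid: trees in $\mathbb{P}$ may have dead ends, so a node $\sigma \in T$ can have only finitely many extensions in $T$ even though $T$ is infinite, and then your restricted tree is finite and hence not a condition. Worse, your dichotomy (either $T \Vdash \neg(\exists \vec{x})\psi$, or some $\sigma \in T$ carries a bounded witness and you restrict to its cone) genuinely misses a case: it can happen that some strings of $T$ have bounded witnesses (so $T \not\Vdash \neg(\exists\vec{x})\psi$) while every such string is non-extendible in $T$; then your recipe produces no condition at all, although the dense set is still met below $T$. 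The repair is the paper's case split: let $S = \{\sigma \in T : \sigma \text{ has no bounded witness}\}$, which is a subtree of $T$; if $S$ is infinite then $S \in \mathbb{P}$ and $S \Vdash \neg(\exists\vec{x})\psi$, and if $S$ is finite then all sufficiently long strings of $T$ lie in $T - S$, so (since $T$ is infinite and finitely branching) one can choose $\sigma \in T - S$ whose comparability cone $R$ in $T$ is infinite; this $R$ is a condition below $T$ and forces $(\exists\vec{x})\psi$ because $\sigma$ is the only string of $R$ at level $|\sigma|$ and its witness is bounded by $|\sigma|$. So the extension forcing $\varphi$ must come from a carefully chosen extendible $\sigma$, not from an arbitrary witnessed one.

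Beyond the base case, your inductive superstructure (a downward-persistence lemma plus separate treatments of $\wedge$, $\exists$ and $\neg$) is sound but much heavier than necessary: the observation you make for the $(\exists x)$ clause --- either some $S \leq T$ forces $\varphi$, or no extension does, in which case $T \Vdash \neg\varphi$ outright by clause (2c) of Definition \ref{def:Pi01-forcing-relation} --- applies verbatim to \emph{every} $\varphi$ that is not of the $\Sigma^0_1$ base form, since for such $\varphi$ the formula $\neg\varphi$ is interpreted by clause (2c). That one remark is the paper's entire proof outside the base case, and it dispenses with the persistence lemma and with your $\wedge$ and $\neg$ cases; it also avoids the small extra argument your $\neg$ case silently needs (your parenthetical ``any such $S$ would satisfy $S \not\Vdash \psi$'' is immediate when $\neg\psi$ falls under clause (2c), but when $\neg\psi$ is of the $\Pi^0_1$ base form it requires noting that an infinite condition cannot simultaneously satisfy both clauses of Definition \ref{def:Pi01-forcing-relation}(1)).
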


\begin{proof}
Fix a formula $\varphi$ and a tree $T \in \mathbb{P}$.

Suppose that $\varphi$ is of the form $(\exists \vec{x}) \psi$ and $\psi$ is $\Sigma^0_0$. Let
$$
    S = \{\sigma \in T: (\forall \vec{n}) (\max \vec{n} < |\sigma| \to \neg \psi(\sigma; \vec{x})[\vec{n}])\}.
$$
Clearly $S$ is a computable subtree of $T$. If $S$ is infinite then $S \Vdash \neg \varphi$. If $S$ is finite then pick $\sigma \in T - S$ such that the following set is infinite:
$$
    R = \{\tau \in T: \tau \text{ is comparable with } \sigma\}.
$$
Then $R \in \mathbb{P}$ and $R \leq T$. As $\sigma \in T - S$, $\psi(\sigma; \vec{x})[\vec{n}]$ for some $\vec{n}$ with $\max \vec{n} < |\sigma|$. So $R \Vdash \varphi$.

If $\varphi$ is not $\Sigma^0_1$ then the density follows from clause (2c) of Definition \ref{def:Pi01-forcing-relation}.
\end{proof}

If $\mathcal{F}$ is a sufficiently $\mathbb{P}$-generic filter then $\bigcap_{T \in \mathcal{F}} [T]$ contains exactly one real $G$. Conversely, from each real $H$ we can define an induced filter $\mathcal{F}(H)$ over $\mathbb{P}$ as the trees in $\mathbb{P}$ having $H$ as an infinite path. Moreover, if $G$ is the real defined from a generic filter $\mathcal{F}$ as above then $\mathcal{F} = \mathcal{F}(G)$. So we may say that a real $G$ is sufficiently generic.

\begin{lemma}\label{lem:Pi01-forcing-completeness}
Suppose that $G$ is a sufficiently $\mathbb{P}$-generic. Then for each $n > 0$ and each $\Sigma^0_n$ ($\Pi^0_n$) formula $\varphi(G)$, $\varphi(G)$ holds if and only if $T \Vdash \varphi(G)$ for some $T \in \mathcal{F}(G)$.
\end{lemma}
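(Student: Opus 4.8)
The plan is to establish both implications at once, by induction on the construction of $\varphi(G)$ following the clauses of Definition~\ref{def:Pi01-forcing-relation}. Three routine facts should be isolated first. \emph{Monotonicity}: if $S \leq T$ and $T \Vdash \varphi(G)$, then $S \Vdash \varphi(G)$; this is a direct induction on Definition~\ref{def:Pi01-forcing-relation}, using $2^n \cap S \subseteq 2^n \cap T$ for clause~(1) and $\{R : R \leq S\} \subseteq \{R : R \leq T\}$ for clause~(2c). \emph{Directedness of $\mathcal{F}(G)$}: if $S, T \in \mathcal{F}(G)$, then $S \cap T$ is again an infinite primitively recursive subtree of $T_0$ through $G$, hence lies in $\mathcal{F}(G)$ and refines both $S$ and $T$. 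Finally, for each $n$ the tree $U_n = \{\tau \in T_0 : \tau \text{ is comparable with } G \uh n\}$ is an infinite primitively recursive subtree of $T_0$ with $G \in [U_n]$, so $U_n \in \mathcal{F}(G)$.

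For the base case $\varphi = (\exists \vec x)\psi$ with $\psi$ being $\Sigma^0_0$: if $\psi(G)[\vec m]$ holds for some $\vec m$, then by $\Sigma^0_0$-absoluteness $\psi(\sigma)[\vec m]$ already holds for $\sigma = G \uh n$ once $n$ is large enough, and since $G \uh n$ is the unique length-$n$ node of $U_n$ this yields $U_n \Vdash (\exists \vec x)\psi$; conversely, if $T \Vdash (\exists \vec x)\psi$ for some $T \in \mathcal{F}(G)$, witnessed at some level $n$, then $G \uh n \in 2^n \cap T$ supplies $\vec m$ with $\psi(G \uh n)[\vec m]$ and hence $\psi(G)[\vec m]$. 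For $\varphi = \neg(\exists \vec x)\psi$ the soundness direction is the contrapositive of this computation, while for the other direction, if $\varphi(G)$ holds then by Lemma~\ref{lem:Pi01-forcing-density} and genericity some $T \in \mathcal{F}(G)$ forces $(\exists \vec x)\psi$ or forces $\neg(\exists \vec x)\psi$, and the former is impossible because it would force $(\exists \vec x)\psi$ to hold of $G$.

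The inductive clauses are now short. For $\varphi = (\exists x)\psi$ treated by clause~(2a): $\varphi(G)$ holds iff $\psi(G)[m]$ holds for some $m$, iff, by the induction hypothesis applied to the lower-rank formulas $\psi(G)[m]$, some $T \in \mathcal{F}(G)$ forces some $\psi(G)[m]$, iff some $T \in \mathcal{F}(G)$ forces $\varphi(G)$. For $\varphi = \psi \wedge \theta$: the induction hypothesis gives $S, T \in \mathcal{F}(G)$ forcing $\psi$ and $\theta$ respectively, whence $S \cap T \in \mathcal{F}(G)$ forces both by monotonicity; the converse is immediate from clause~(2b). For $\varphi = \neg \psi$: if $\varphi(G)$ holds, choose, by Lemma~\ref{lem:Pi01-forcing-density} and genericity, $T \in \mathcal{F}(G)$ deciding $\psi$; it cannot force $\psi$, since by the induction hypothesis that would make $\psi(G)$ hold, so $T \Vdash \neg \psi$. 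Conversely, if $T \in \mathcal{F}(G)$ forces $\neg \psi$ but $\psi(G)$ held, the induction hypothesis would produce $S \in \mathcal{F}(G)$ forcing $\psi$, and then $S \cap T \leq T$ would force $\psi$, contradicting $T \Vdash \neg \psi$. Since there are only countably many formulas, requiring $G$ to meet, for every $\psi$, the dense set of Lemma~\ref{lem:Pi01-forcing-density} is what ``sufficiently generic'' should mean here, and it legitimizes all these appeals.

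I expect the only real friction to be bookkeeping rather than mathematics: one must check that Definition~\ref{def:Pi01-forcing-relation} assigns a unique clause to every $\Sigma^0_n$ and $\Pi^0_n$ formula so that the recursion and the induction are well-founded (handling blocks of like quantifiers by iterating clauses~(1) and~(2a), and noting that the negation of a $\Sigma^0_0$ formula is again $\Sigma^0_0$), and one must pin down the convention under which $\psi(\sigma)[\vec m]$ is evaluated for a finite string $\sigma$ — as in the passage from Lemma~\ref{lem:Pi01-forcing-rec-subtrees} — so that the levelwise clause~(1) genuinely matches truth through $\Sigma^0_0$-absoluteness. This mild departure from the textbook ``forcing equals truth'' proof, dictated by the definability constraints that shaped Definition~\ref{def:Pi01-forcing-relation}, is where a little care is needed; everything else is the standard density-plus-genericity argument.
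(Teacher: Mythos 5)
Your proposal is correct and follows essentially the same route as the paper's proof: induction on the clauses of Definition \ref{def:Pi01-forcing-relation}, with clause (1) handled by $\Sigma^0_0$-absoluteness along initial segments of $G$, conjunctions by intersecting conditions in $\mathcal{F}(G)$, and negations by Lemma \ref{lem:Pi01-forcing-density} plus genericity. The only differences are cosmetic — you make monotonicity and directedness of $\mathcal{F}(G)$ explicit and use the cone trees $U_n$ (and the density lemma for the $\Pi^0_1$ base case) where the paper writes down explicit subtrees of $T_0$; the underlying argument is the same.
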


\begin{proof}
Suppose that $\psi$ is $\Sigma^0_0$ and $\varphi = (\exists \vec{x}) \psi$. If $\varphi(G)$ holds then $\psi(G; \vec{x})[\vec{n}]$ holds for some $\vec{n}$. Let
$$
    T = \{\sigma \in T_0: |\sigma| \leq \max \vec{n} \vee \psi(\sigma; \vec{x})[\vec{n}]\}.
$$
Then $G \in [T]$ and $T \Vdash \varphi$. Conversely, if $T \in \mathcal{F}(G)$ and $T \Vdash \varphi$ then there exist $n$ and $\sigma \in 2^n \cap T$ such that $\sigma \prec G$ and $\psi(\sigma; \vec{x})[\vec{m}]$ for some $\vec{m}$. Hence $\varphi(G)$ holds.

Suppose that $\psi$ is $\Sigma^0_0$ and $\varphi = \neg (\exists \vec{x}) \psi$. If $\varphi(G)$ holds then let
$$
    T = \{\sigma \in T_0: (\forall \vec{n}) (\max \vec{n} < |\sigma| \to \neg \psi(\sigma; \vec{x})[\vec{n}])\}.
$$
Then $T \in \mathcal{F}(G)$ and $T \Vdash \varphi$. Conversely, if $T \in \mathcal{F}(G)$ and $T \Vdash \varphi$ then $\neg \psi(\sigma; \vec{x})[\vec{n}]$ for every $\sigma \prec G$ and every $\vec{n}$ with $\max \vec{n} < |\sigma|$. Hence $\varphi(G)$ holds.

Suppose that $\varphi$ is not in above forms. If $\varphi$ is of the form $(\exists x) \psi$ then
\begin{align*}
    \varphi(G) \text{ holds } & \Leftrightarrow (\exists n) (\psi(G; x) [n] \text{ holds }) \\
    & \Leftrightarrow (\exists n) (\exists T \in \mathcal{F}(G)) (T \Vdash \psi(G; x) [n]) \\
    & \Leftrightarrow (\exists T \in \mathcal{F}(G)) (T \Vdash \varphi(G)),
\end{align*}
where the second equivalence is by the induction hypothesis and the last is by clause (2a) of Definition \ref{def:Pi01-forcing-relation}.

If $\varphi$ is of the form $\psi \wedge \theta$ then
\begin{align*}
    \varphi(G) \text{ holds } & \Leftrightarrow \psi(G) \text{ and } \theta(G) \text{ hold} \\
    & \Leftrightarrow (\exists S, T \in \mathcal{F}(G)) (S \Vdash \psi \text{ and } T \Vdash \theta) \\
    & \Leftrightarrow (\exists R \in \mathcal{F}(G)) (R \Vdash \varphi(G)),
\end{align*}
where the second equivalence is by the induction hypothesis and the last is by taking $R = S \cap T$ and clause (2b) of Definition \ref{def:presv-arith}.

If $\varphi$ is of the form $\neg \psi$ then
\begin{align*}
    \varphi(G) \text{ holds } & \Leftrightarrow \psi(G) \text{ does not hold} \\
    & \Leftrightarrow (\forall T \in \mathcal{F}(G)) (T \not\Vdash \psi)
\end{align*}
By Lemma \ref{lem:Pi01-forcing-density} and the genericity of $G$, the last statement above is equivalent to the following
$$
    T \Vdash \neg \psi \text{ for some } T \in \mathcal{F}(G).
$$
\end{proof}

Next we show that our forcing relation is definable.

\begin{lemma}\label{lem:Pi01-forcing-definability}
If $n > 0$ and $\varphi(G)$ is a $\Sigma^0_n$ ($\Pi^0_n$) formula in prenex normal form then $T \Vdash \varphi(G)$ is $\Sigma^0_n$ ($\Pi^0_n$) predicate for $T \in \mathbb{P}$.
\end{lemma}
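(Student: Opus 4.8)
\emph{Plan of proof.} The plan is to argue by induction on $n \ge 1$, and within each level by induction on the way $\varphi$ is built up according to Definition \ref{def:Pi01-forcing-relation}, proving the slightly stronger statement that $T \Vdash \varphi(G)$ is $\Sigma^0_n$ (resp. $\Pi^0_n$) \emph{uniformly in} an index for the condition $T$ and in the first-order parameters of $\varphi$. Here clause (1) of Definition \ref{def:Pi01-forcing-relation} is to be applied whenever $\varphi$ is a block of existential quantifiers over a $\Sigma^0_0$ matrix, or the negation of such, and clause (2) otherwise; so a prenex $\Sigma^0_n$ formula is $(\exists \vec x_1)\chi$ with $\chi$ prenex $\Pi^0_{n-1}$, while a prenex $\Pi^0_n$ formula is $(\forall \vec x_1)\chi = \neg\bigl((\exists \vec x_1)\neg\chi\bigr)$ with $\chi$ prenex $\Sigma^0_{n-1}$. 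Throughout I fix, uniformly in $T$, a computable enumeration $(S_m : m < \omega)$ of $\operatorname{Pr}(T)$ as in Lemma \ref{lem:Pi01-forcing-Pr}.

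At level $n=1$ only clause (1) is needed: a prenex $\Sigma^0_1$ formula $(\exists \vec x)\psi$ (with $\psi$ being $\Sigma^0_0$) is forced by a condition exactly when $(\exists n)(\cdots)$ holds with every remaining quantifier bounded and ``$\sigma \in T$'' uniformly decidable, so this is $\Sigma^0_1$; a prenex $\Pi^0_1$ formula $\neg(\exists \vec x)\psi$ is forced exactly when $(\forall \sigma \in T)(\cdots)$ with the remaining quantifiers bounded by $|\sigma|$, so this is $\Pi^0_1$. For $n \ge 2$: if $\varphi = (\exists \vec x_1)\chi$ is prenex $\Sigma^0_n$, peel the leading existentials one at a time via clause (2a); since $T \Vdash \chi$ is $\Pi^0_{n-1}$ by the induction hypothesis and prefixing one or more unbounded existentials to a $\Pi^0_{n-1}$ predicate yields a $\Sigma^0_n$ predicate, $T \Vdash \varphi$ is $\Sigma^0_n$. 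If $\varphi = \neg\bigl((\exists \vec x_1)\neg\chi\bigr)$ is prenex $\Pi^0_n$ with $\chi$ prenex $\Sigma^0_{n-1}$, then either $\neg\chi$ has the shape $\neg(\exists \vec z)\psi$ with $\psi$ being $\Sigma^0_0$ (the case $n=2$), whence $T \Vdash \neg\chi$ is $\Pi^0_1$ by clause (1), or clause (2c) applies to $\neg\chi$ and $T \Vdash \neg\chi$ is $\Pi^0_{n-1}$ by the computation below; in either case clause (2a) makes $T \Vdash (\exists \vec x_1)\neg\chi$ a $\Sigma^0_n$ predicate, and one further application of clause (2c) handles the outermost negation. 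The clause (2b) case is immediate from the closure of $\Sigma^0_n$ and $\Pi^0_n$ under finite conjunction.

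The crux is clause (2c). Suppose $T \Vdash \psi$ has already been shown to be $\Sigma^0_k$ uniformly in $T$ for some $k \ge 2$. By the definition of $\le$ on $\mathbb{P}$,
$$
    T \Vdash \neg\psi(G) \iff (\forall m)\bigl(S_m\ \text{is finite}\ \vee\ S_m \not\Vdash \psi(G)\bigr).
$$
By Lemma \ref{lem:Pi01-forcing-Pr} the predicate ``$S_m$ is finite'' is $\Sigma^0_1$ uniformly in $m$ and $T$, and by the induction hypothesis ``$S_m \not\Vdash \psi(G)$'' is $\Pi^0_k$ uniformly in $m$; since $k \ge 2$ we have $\Sigma^0_1 \subseteq \Pi^0_k$, so the matrix of the displayed equivalence is $\Pi^0_k$ and the outer universal quantifier leaves it $\Pi^0_k$, giving that $T \Vdash \neg\psi$ is $\Pi^0_k$. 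Applying this with $\psi := (\exists \vec x_1)\neg\chi$, whose forcing relation is $\Sigma^0_n$ with $n \ge 2$, completes the $\Pi^0_n$ case. I expect the main obstacle to be exactly the bookkeeping that makes this close up: one must check that every negation to which the recursion on a prenex formula applies clause (2c) dominates a formula carrying a leading existential quantifier, so that its forcing relation always lands on the $\Sigma$-side of some level $k \ge 2$ rather than the $\Pi$-side; it is then Lemma \ref{lem:Pi01-forcing-Pr} --- the fact that ``being a condition below $T$'' costs only a negligible $\Sigma^0_1$ quantifier --- that prevents the complexity from creeping up by one with each nested negation.
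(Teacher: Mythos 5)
Your proof is correct and follows essentially the same route as the paper: induction on $n$ with the base case from clause (1), the $\Sigma^0_n$ case from clauses (2a)--(2b), and the $\Pi^0_n$ case by quantifying over $\operatorname{Pr}(T)$ and absorbing the $\Sigma^0_1$ predicate ``$S_m$ is finite'' (Lemma \ref{lem:Pi01-forcing-Pr}) into the $\Pi^0_n$ matrix, which is exactly the paper's argument. Your extra bookkeeping about how prenex $\Pi^0_n$ formulas are parsed as $\neg(\exists \vec x)\neg\chi$ only makes explicit what the paper leaves implicit.
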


\begin{proof}
The case where $n = 1$ follows from clause (1) of Definition \ref{def:Pi01-forcing-relation}.

Assume that $n > 1$. For $\varphi(G) \in \Sigma^0_n$, the lemma follows from the induction hypothesis and Definition \ref{def:Pi01-forcing-relation}(2). Suppose that $\varphi(G)$ is $\Pi^0_n$. Then $\varphi(G)$ is of the form $\neg \psi(G)$ for some $\psi(G) \in \Sigma^0_{n}$. So,
\begin{align*}
    T \Vdash \varphi(G) &\Leftrightarrow S \not\Vdash \psi(G) \text{ for all } S \leq T \\
    &\Leftrightarrow (\forall S \in \operatorname{Pr}(T))(S \in \mathbb{P} \to S \not\Vdash \psi(G)).
\end{align*}
By the induction hypothesis, $S \not\Vdash \psi(G)$ is a $\Pi^0_n$ predicate of $S$. So the last statement above is a $\Pi^0_n$ predicate of $T$ as $n > 1$ and $\operatorname{Pr}(T) - \mathbb{P}$ is $\Sigma^0_1$ by Lemma \ref{lem:Pi01-forcing-Pr}.
\end{proof}

Now we can show that the definability strength of $\mathbb{P}$-generic reals is weak.

\begin{lemma}\label{lem:Pi01-forcing-presv}
If $A \not\in \Sigma^0_n$ ($\Pi^0_n$) and $\varphi(G, x) \in \Sigma^0_n$ ($\Pi^0_n$) where $n > 0$, then the set of $T \in \mathbb{P}$ satisfying the following property is dense:
$$
    (\exists n \in A) (T \Vdash \neg \varphi(G; x)[n]) \vee (\exists n \not\in A) (T \Vdash \varphi(G; x)[n]).
$$
\end{lemma}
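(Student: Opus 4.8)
The plan is to argue by contradiction and extract from the forcing relation a definition of $A$ that is too simple. Fix an arbitrary $T \in \mathbb{P}$ and, aiming to show the displayed set is dense below $T$, suppose toward a contradiction that no $S \leq T$ has the stated property; concretely this means that for every $S \leq T$ we have $S \not\Vdash \neg\varphi(G;x)[m]$ for all $m \in A$ and $S \not\Vdash \varphi(G;x)[m]$ for all $m \notin A$. First I would apply Lemma \ref{lem:Pi01-forcing-density} to each numeral instance $\varphi(G;x)[m]$ (taken in prenex form, of the same complexity as $\varphi(G,x)$): there is $S \leq T$ deciding $\varphi(G;x)[m]$. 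By the contradiction hypothesis, for $m \in A$ the negative decision is excluded, so some $S \leq T$ forces $\varphi(G;x)[m]$; for $m \notin A$ the positive decision is excluded, so some $S \leq T$ forces $\neg\varphi(G;x)[m]$. Together with the hypothesis again (read contrapositively) this yields, for this particular $T$, the two descriptions
$$
A = \{m : (\exists S \leq T)\, S \Vdash \varphi(G;x)[m]\} = \{m : (\forall S \leq T)\, S \not\Vdash \neg\varphi(G;x)[m]\}.
$$

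Next I would compute the complexity of these descriptions. Fix a computable enumeration $(S_k : k < \omega)$ of $\operatorname{Pr}(T)$, so that ``$S \leq T$'' becomes ``$\exists k\,(S = S_k \wedge S_k \in \mathbb{P})$'', and recall from Lemma \ref{lem:Pi01-forcing-Pr} that ``$S_k \in \mathbb{P}$'' is a $\Pi^0_1$ predicate of $k$. If $\varphi(G,x) \in \Sigma^0_n$ I use the first description: by Lemma \ref{lem:Pi01-forcing-definability} the predicate ``$S_k \Vdash \varphi(G;x)[m]$'' is $\Sigma^0_n$ uniformly in $k,m$, so ``$S_k \in \mathbb{P} \wedge S_k \Vdash \varphi(G;x)[m]$'' is $\Sigma^0_n$ (the $\Pi^0_1$ conjunct being harmless), whence $A$ is $\Sigma^0_n$, contradicting $A \notin \Sigma^0_n$. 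If $\varphi(G,x) \in \Pi^0_n$ I use the second description: $\neg\varphi(G;x)[m]$ is $\Sigma^0_n$, so ``$S_k \not\Vdash \neg\varphi(G;x)[m]$'' is $\Pi^0_n$ and the implication ``$S_k \in \mathbb{P} \to S_k \not\Vdash \neg\varphi(G;x)[m]$'' is $\Pi^0_n$, whence $A$ is $\Pi^0_n$, contradicting $A \notin \Pi^0_n$. (Note the two descriptions are used asymmetrically: $\exists k$ preserves $\Sigma^0_n$ while $\forall k$ preserves $\Pi^0_n$, but the mixed combinations would cost a level, so each case must use the right one.)

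The main obstacle is exactly this last complexity bookkeeping, and it is the reason the present section was built around primitively recursive rather than merely computable trees: selecting a genuine subcondition out of the computable list $(S_k)$ costs only the $\Sigma^0_1$ overhead of deciding infinitude, thanks to Lemma \ref{lem:Pi01-forcing-Pr}, and this overhead is absorbed by $\Sigma^0_n$ or $\Pi^0_n$ as soon as $n \geq 2$. For $n = 1$ this absorption is not free; but that case requires no separate argument here, since $\WKL$ admits simultaneous avoidance of countably many cones and hence already preserves $\Sigma^0_1$- and $\Pi^0_1$-definitions by Proposition \ref{pro:sim-avoidance-presv-P1}, so it suffices to run the argument above for $n \geq 2$. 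Thus, once the uniform complexity estimates for the forcing relation are in hand, the lemma follows.
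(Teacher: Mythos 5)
Your treatment of the $\Sigma^0_n$ case for $n \geq 2$ is essentially the paper's argument in contrapositive form: your first description $\{m : (\exists S \leq T)\, S \Vdash \varphi(G;x)[m]\}$ is exactly the $\Sigma^0_n$ set the paper compares with $A$, and that part is fine. But there are two genuine gaps. First, the lemma is asserted for all $n > 0$, and you simply do not prove the case $n = 1$; declaring it unnecessary confuses the lemma with its downstream application. Even for Theorem~\ref{thm:WKL-presv} the patch is insufficient as stated: the theorem needs a \emph{single} $G \in [T_0]$ preserving all levels of the hierarchy simultaneously, and invoking cone avoidance via Proposition~\ref{pro:sim-avoidance-presv-P1} only yields some path handling level $1$; to fold level-$1$ preservation into the same generic you need precisely a density statement for $\mathbb{P}$, which is the missing case. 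The paper proves $n = 1$ by a separate, more hands-on argument: it replaces the quantifier over subconditions (whose membership in $\mathbb{P}$ costs a $\Pi^0_1$ conjunct that $\Sigma^0_1$ cannot absorb) by the genuinely $\Sigma^0_1$ set $W = \{m : (\exists l)(\forall \sigma \in T \cap 2^l)\, \varphi(\sigma,x)[m]\}$, and builds the required extension directly from a point of $W \bigtriangleup A$.

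Second, the complexity bookkeeping in your $\Pi^0_n$ case is not justified. Writing $\varphi = \neg\psi$ with $\psi$ prenex $\Sigma^0_n$, the clause $T \Vdash \neg\varphi(G;x)[m]$ appearing in the lemma is, syntactically, $T \Vdash \neg\neg\psi$, and the forcing relation of Definition~\ref{def:Pi01-forcing-relation} is defined on the formula as written: by clause (2c), $S_k \Vdash \neg\neg\psi$ unwinds to ``for every $R \leq S_k$ there is $Q \leq R$ with $Q \Vdash \psi$'', which on its face is $\Pi^0_{n+1}$; Lemma~\ref{lem:Pi01-forcing-definability} applies only to prenex formulas and does not give you that $S_k \not\Vdash \neg\varphi[m]$ is $\Pi^0_n$ (nor is $S \Vdash \neg\neg\psi$ the same as $S \Vdash \psi$ --- it says that forcing $\psi$ is dense below $S$). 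So your second description of $A$ has not been shown to be $\Pi^0_n$. The paper avoids this entirely: it applies the already-established $\Sigma^0_n$ case to $\omega - A$ and $\psi$, and then uses the monotonicity fact that $T \Vdash \psi[m]$ implies $T \Vdash \neg\neg\psi[m]$ to conclude that the dense set so obtained is contained in the dense set required for $A$ and $\varphi$. Your argument can be repaired along those lines, but as written both the $n = 1$ case and the $\Pi^0_n$ case have holes.
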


\begin{proof}
Fix $T \in \mathbb{P}$. We need to find $S \leq T$ with
$$
    (\exists n \in A) (S \Vdash \neg \varphi(G, x)[n]) \vee (\exists n \not\in A) (S \Vdash \varphi(G, x)[n]).
$$

Firstly, assume that $A \not\in \Sigma^0_1$ and $\varphi(G, x) \in \Sigma^0_1$. Let
$$
    W = \{n: (\exists l)(\forall \sigma \in T \cap 2^l) \varphi(\sigma, x)[n]\}.
$$
Then $W \in \Sigma^0_1$ and thus $W \neq A$. Fix $n \in W \bigtriangleup A$. If $n \in W - A$, then let $\sigma \in T$ be such that $\varphi(\sigma, n)$ and the following subtree of $T$ is infinite:
$$
    S = \{\tau \in T: \tau \text{ is comparable with } \sigma\}.
$$
Then $S \leq T$ and $S \Vdash \varphi(G, x)[n]$. Suppose that $n \in A - W$. Then let
$$
    S = \{\sigma \in T: \neg \varphi(\sigma, x)[n]\}.
$$
By Lemma \ref{lem:Pi01-forcing-rec-subtrees}, we may assume that $S \in \mathbb{P}$. Then $S \Vdash \neg \varphi(G, x)[n]$.

Assume that $n > 1$, $A \not\in \Sigma^0_n$ and $\varphi(G, x) \in \Sigma^0_n$. Let
$$
    U = \{n: (\exists S \leq T)(S \Vdash \varphi(G, x)[n])\}.
$$
By Lemmata \ref{lem:Pi01-forcing-Pr} and \ref{lem:Pi01-forcing-definability}, $U \in \Sigma^0_n$ and thus $U \neq A$. Fix $n \in A \bigtriangleup U$. If $n \in A - U$, then $S \not\Vdash \varphi(G, x)[n]$ for all $S \leq T$. By Definition \ref{def:Pi01-forcing-relation}, $T \Vdash \neg \varphi(G, x)[n]$. So we can simply let $S = T$. If $n \in U - A$, then let $S \leq T$ be such that $S \Vdash \varphi(G, x)[n]$.

Finally, assume that $n > 0$, $A \not\in \Pi^0_n$ and $\varphi(G, x) \in \Pi^0_n$. Then $\omega - A \not\in \Sigma^0_n$ and $\varphi(G, x)$ is of the form $\neg \psi(G, x)$ for some $\psi(G, x) \in \Sigma^0_n$. By the proof above, the following set is dense
$$
    \{T \in \mathbb{P}: (\exists n \in \omega - A) (T \Vdash \neg \psi(G, x)[n]) \vee (\exists n \not\in \omega - A) (T \Vdash \psi(G, x)[n])\}.
$$
Note that if $T \Vdash \psi(G, x)[n]$ then $T \Vdash \neg \neg \psi(G, x)[n]$. So the above set is contained in the following set
$$
    \{T \in \mathbb{P}: (\exists n \not\in A) (T \Vdash \varphi(G, x)[n]) \vee (\exists n \in A) (T \Vdash \neg \varphi(G, x)[n])\}.
$$
This proves the lemma for the last case.
\end{proof}

Theorem \ref{thm:WKL-presv} follows from Lemmata \ref{lem:Pi01-forcing-completeness} and \ref{lem:Pi01-forcing-presv}.

\subsection{$\COH$}

Recall that a cohesive set for a sequence $(A_n: n < \omega)$ is an infinite set $C$ such that either $C \subseteq^* A_n$ or $C \subseteq^* \omega - A_n$ for each $n$. $\COH$ is the assertion that every sequence of sets has a cohesive set. Cholak et alia \cite{Cholak.Jockusch.ea:2001.Ramsey} introduced $\COH$ and proved that it is a consequence of $\RCA + \RT^2_2$. Here we prove that $\COH$ enjoys a preservation property.

\begin{theorem}\label{thm:COH-presv-D2}
If $\vec{A} = (A_i: i < \omega)$ be $Z$-computable and $(B_i: i < \omega)$ are such that no $B_i$ is $\Sigma^Z_1$, then there exists an $\vec{A}$-cohesive set $C$ such that no $B_i$ is $\Sigma^0_1$ relative to $Z \oplus C$.

Hence $\COH$ admits preservation of $\Delta^0_2$-definitions.
\end{theorem}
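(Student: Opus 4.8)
The plan is to build the cohesive set $C$ by a forcing argument, in the style of the arguments for $\WKL$ and for Cohen/random genericity earlier in the paper, using Mathias-type conditions but arranged so that the forcing relation for $\Sigma^0_1$ formulas is itself $\Sigma^0_1$ (relative to $Z$). Since the second sentence follows from the first by Proposition~\ref{pro:presv-equiv}(4) (taking $n=1$; the hypothesis there is exactly what the first sentence asserts, after noting that ``properly $\Pi^Z_1$'' is covered since the complement of a $\Pi^0_1$ set is $\Sigma^0_1$), I only need to produce the cohesive set $C$ with no $B_i$ being $\Sigma^0_1$ relative to $Z \oplus C$. As all constructions here relativize, I may assume $Z$ is computable.

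First I would set up the forcing. A condition is a pair $(\sigma, X)$ where $\sigma \in [\omega]^{<\omega}$ is a finite ``stem'', $X$ is an infinite set with $\max\sigma < \min X$, and $X$ lies in some fixed ``reservoir class'' that keeps things computable. The key trick, as in \S 3.2, is to restrict the reservoirs: I would require $X$ to be (an infinite member of) the class of sets of the form $\bigcap_{i<k}A_i^{e_i}$ intersected with a primitively recursive set, where $A_i^0 = A_i$ and $A_i^1 = \omega - A_i$; these are uniformly computable from $Z$ given the finite data, and the generic set $C$ obtained as $\bigcup$ of the stems is automatically $\vec A$-cohesive because at stage $i$ we shrink the reservoir into $A_i$ or $\omega - A_i$. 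Extension $(\tau, Y) \leq (\sigma, X)$ means $\sigma \subseteq \tau$, $Y \subseteq X$, and $\tau - \sigma \subseteq X$. I would then define a forcing relation for arithmetic formulas about $C$: for a $\Sigma^0_1$ formula $(\exists \vec x)\psi(C,\vec x)$ with $\psi$ bounded, declare $(\sigma,X) \Vdash (\exists\vec x)\psi$ iff some witness is already forced by the stem, i.e. $\psi(\sigma, \vec x)[\vec m]$ holds with all quantified objects in $\psi$ bounded by $|\sigma|$; for negations use the usual ``no extension forces'' clause; higher levels are built by induction. As in Lemmata~\ref{lem:Pi01-forcing-density}, \ref{lem:Pi01-forcing-completeness} one checks density (forcing-the-decision is dense) and a truth lemma: a sufficiently generic $C$ satisfies $\varphi(C)$ iff some condition in its generic filter forces it. The crucial definability point, paralleling Lemma~\ref{lem:Pi01-forcing-definability}, is that ``$(\sigma,X)\Vdash \varphi$'' for $\Sigma^0_1$ $\varphi$ is a $\Sigma^0_1$ predicate of the (codes of the) condition — this is where the primitively-recursive reservoirs pay off, since checking a reservoir is infinite is $\Sigma^0_1$.

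Then comes the preservation step, mirroring Lemma~\ref{lem:Pi01-forcing-presv}: fix $i$ and a $\Sigma^0_1$ formula $\varphi(C,x)$; I claim the set of conditions $(\sigma,X)$ forcing
$$
(\exists n \in B_i)\bigl((\sigma,X) \Vdash \neg\varphi(C;x)[n]\bigr) \vee (\exists n \notin B_i)\bigl((\sigma,X) \Vdash \varphi(C;x)[n]\bigr)
$$
is dense. Given $(\sigma,X)$, consider $W = \{n : (\exists Y)\,((\sigma,Y) \leq (\sigma,X) \text{ and } (\sigma,Y) \Vdash \varphi(C;x)[n])\}$. By the definability lemma and Lemma~\ref{lem:Pi01-forcing-Pr}-style bookkeeping, $W$ is $\Sigma^0_1$ (relative to $Z$), so $W \neq B_i$; pick $n$ in the symmetric difference. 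If $n \in W - B_i$, a suitable extension forces $\varphi(C;x)[n]$; if $n \in B_i - W$, then no extension forces it, so $(\sigma,X)$ itself forces $\neg\varphi(C;x)[n]$. Taking $C$ sufficiently generic to meet all these dense sets (countably many, over all $i$ and all $\Sigma^0_1$ formulas $\varphi$) and to be cohesive, the truth lemma gives $B_i \neq \{n : \varphi(C,n)\}$ for every $\Sigma^0_1$ $\varphi$, i.e. $B_i \notin \Sigma^{Z\oplus C}_1$, as required.

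The main obstacle I anticipate is getting the $\Sigma^0_1$-definability of the forcing relation to go through cleanly with Mathias conditions: unlike the binary-tree case, a condition carries an infinite reservoir $X$, and quantifying over extensions $(\tau,Y)$ naively quantifies over infinite sets. The fix — exactly as in \S 3.2 — is to insist the reservoirs come from a uniformly computable (indeed primitively recursive after an intersection trick) list indexed by finite data, so that ``$(\sigma,Y)\leq(\sigma,X)$'' and ``$Y$ is infinite'' are arithmetically simple; once the index set of reservoirs is handled with the same $\Sigma^0_1$ bookkeeping as $\operatorname{Pr}(T)-\mathbb P$ in Lemma~\ref{lem:Pi01-forcing-Pr}, the induction on formula complexity carries the $\Sigma^0_1$/$\Pi^0_1$ classification through. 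A secondary point to be careful about is ensuring cohesiveness and the genericity requirements are compatible — standard, since ``shrink the reservoir into $A_i$ or its complement'' is always possible (one of the two halves is infinite) and is a dense requirement, so it can be interleaved with the preservation requirements.
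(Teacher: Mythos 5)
Your overall strategy --- Mathias forcing with $Z$-computable reservoirs, a $\Sigma^Z_1$ ``capture'' set $W$, diagonalization via $W \neq B_i$, interleaved with shrinking the reservoir into $A_k$ or its complement --- is exactly the core of the paper's proof (Lemma~\ref{lem:Mathias-prs-non-re}), but two steps fail as written. First, the density argument is undercut by your own definition of $W$: you define forcing of a $\Sigma^0_1$ formula so that it depends only on the stem, and then let $W$ range only over extensions $(\sigma,Y)$ with the \emph{same} stem; hence $W$ is just $\{n : (\sigma,X) \Vdash \varphi[n]\}$, and from $n \notin W$ you cannot conclude that ``no extension forces $\varphi[n]$'' --- an extension $(\tau,Y)$ with a longer stem may well force it, so the inference ``$(\sigma,X) \Vdash \neg\varphi[n]$'' and the subsequent appeal to the truth lemma collapse. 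The repair is to quantify over all extensions, equivalently (since $\Sigma^0_1$ forcing is stem-only) over finite $\tau$ with $\tau - \sigma \subseteq X$; then $W$ is $\Sigma^0_1$ in $Z \oplus X$, and since every reservoir you ever use is $Z$-computable (a finite Boolean combination of the $A_i$ intersected with a simple set), $W$ is $\Sigma^Z_1$ --- which is precisely the set $W = \{n: (\exists G \in (\sigma,X))(n \in W^{Z \oplus G}_e)\}$ of Lemma~\ref{lem:Mathias-prs-non-re}.

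Second, your justification of the definability bookkeeping rests on the claim that ``checking a reservoir is infinite is $\Sigma^0_1$''. That is a fact about finitely branching trees (finiteness is $\Sigma^0_1$ by compactness, as in Lemma~\ref{lem:Pi01-forcing-Pr}); for subsets of $\omega$ it is false: infiniteness of a $Z$-computable set is $\Pi^0_2$, finiteness $\Sigma^0_2$. This does not damage the theorem itself, because at the $\Sigma^0_1$ level the infiniteness of reservoirs never enters any complexity computation (it is needed only to keep the construction alive, and is settled by choosing whichever of $X \cap A_k$, $X - A_k$ is infinite); but it does mean that the full forcing-relation and truth-lemma apparatus you import from the $\WKL$ section, with its induction through all arithmetic levels, would not go through for this forcing, and it is in any case unnecessary. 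The paper's proof dispenses with all of it: a single Mathias extension lemma producing $(\tau,Y) \leq_M (\sigma,X)$ with $Y =^* X$ (so reservoirs stay $Z$-computable without restricting them to a special indexed class), applied along a decreasing sequence of conditions interleaved with cohesiveness steps, already yields the theorem; the second sentence then follows from Proposition~\ref{pro:presv-equiv}(4) exactly as you say.
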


The above theorem follows immediately from a property of Mathias forcing. Recall that a \emph{Mathias condition} is a pair $(\sigma,X) \in [\omega]^{<\omega} \times [\omega]^\omega$ such that $\max \sigma < \min X$. We identify a Mathias condition $(\sigma,X)$ with the set below:
$$
    \{Y \in [\omega]^\omega: \sigma \subset Y \subseteq \sigma \cup X\}.
$$
For two Mathias conditions $(\sigma,X)$ and $(\tau,Y)$, $(\tau,Y) \leq_M (\sigma,X)$ if and only if $(\tau,Y) \subseteq (\sigma,X)$ under the above convention.

\begin{lemma}\label{lem:Mathias-prs-non-re}
Let $B$ be a set and $(\sigma, X)$ be a Mathias condition such that $X \leq_T Z$ and $B \not\in \Sigma^Z_1$. Then for every $e$ there exists $(\tau, Y) \leq_M (\sigma, X)$ such that $Y =^* X$ and $B \neq W^{Z \oplus G}_e$ for all $G \in (\tau, Y)$.
\end{lemma}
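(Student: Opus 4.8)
The plan is to run the usual Mathias-forcing preservation argument, the single ingredient being that the reservoir $X$ is $Z$-computable. That hypothesis makes the set of integers that some extension of $(\sigma,X)$ can force into $W^{Z\oplus G}_e$ into a $\Sigma^Z_1$ set; since $B$ is not $\Sigma^Z_1$, it must disagree with that ``forced-in'' set at some integer $n$, and a finite-use argument turns such an $n$ into the desired extension $(\tau,Y)$.

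Concretely, I would fix $e$ and look at
$$
  U = \{\,n : (\exists \rho \in 2^{<\omega})\,(\,\sigma \subseteq \{x < |\rho| : \rho(x) = 1\} \subseteq \sigma \cup X \ \wedge\ n \in W^{Z\oplus\rho}_e\,)\,\},
$$
where $W^{Z\oplus\rho}_e$ is read so that the second oracle is queried only below $|\rho|$. Since $X \leq_T Z$, the condition on $\rho$ is $Z$-computable, and ``$n \in W^{Z\oplus\rho}_e$'' is $\Sigma^0_1$ uniformly in $(n,\rho)$, so $U \in \Sigma^Z_1$; as $B \notin \Sigma^Z_1$ we get $U \neq B$, and I fix $n \in U \bigtriangleup B$. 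If $n \in U - B$, take $\rho$ witnessing $n\in U$ (note $|\rho| > \max\sigma$ is forced by the condition on $\rho$), set $\tau = \{x < |\rho| : \rho(x) = 1\}$ and $Y = \{x \in X : x \geq |\rho|\}$; then $(\tau,Y) \leq_M (\sigma,X)$, $Y =^* X$, every $G \in (\tau,Y)$ agrees with $\rho$ below $|\rho|$, so $n \in W^{Z\oplus G}_e$ while $n \notin B$, giving $B \neq W^{Z\oplus G}_e$. If $n \in B - U$, I take $(\tau,Y) = (\sigma,X)$: were $n \in W^{Z\oplus G}_e$ for some $G \in (\sigma,X)$, the converging computation would use only a finite initial segment of the characteristic function of $G$, and that segment (extended past $\max\sigma$ if necessary) would witness $n \in U$, a contradiction; so $n \notin W^{Z\oplus G}_e$ for all $G$, and since $n \in B$ again $B \neq W^{Z\oplus G}_e$.

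I do not expect a genuine conceptual obstacle; the only point requiring care is the oracle-use convention behind the finitary relation ``$n \in W^{Z\oplus\rho}_e$'', which must be set up so that it is simultaneously (i) $\Sigma^0_1(Z)$ uniformly in $(n,\rho)$, (ii) preserved when $G$ extends $\rho$ by elements beyond $|\rho|$ (needed in the first case), and (iii) realized by a finite initial segment of $G$ whenever $W^{Z\oplus G}_e(n)\downarrow$ (needed in the second case). Once that convention is fixed the rest is bookkeeping, the substantive observation being just that $U$ is $\Sigma^Z_1$ precisely because $X \leq_T Z$. Theorem \ref{thm:COH-presv-D2} then follows by forcing with Mathias conditions whose reservoirs are infinite $Z$-computable sets: the requirements ``$X \subseteq A_n$ or $X \subseteq \omega - A_n$'' are dense and keep the reservoirs $Z$-computable while forcing $\vec A$-cohesiveness, and for each $i$ and $e$ the sets of conditions supplied by the lemma (applied to $B = B_i$) are dense and open, so a sufficiently generic $C$ is $\vec A$-cohesive with $B_i \neq W^{Z\oplus C}_e$ for all $i,e$; hence no $B_i$ is $\Sigma^0_1$ relative to $Z \oplus C$, and $\COH$ admits preservation of $\Delta^0_2$-definitions by Proposition \ref{pro:presv-equiv}(4).
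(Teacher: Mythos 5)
Your argument is correct and is essentially the paper's own proof: the paper defines $W = \{n : (\exists G \in (\sigma,X))(n \in W^{Z\oplus G}_e)\}$, notes it is $\Sigma^Z_1$ (exactly the finite-use/compactness point you make explicit with the strings $\rho$, using $X \leq_T Z$), fixes $n \in W \bigtriangleup B$, and handles the two cases just as you do (finite initial segment plus truncated reservoir in one case, the unchanged condition in the other). No substantive difference; your version merely spells out the oracle-use convention and the routine derivation of Theorem \ref{thm:COH-presv-D2}.
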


\begin{proof}
Let
$$
    W = \{n: (\exists G \in (\sigma, X))(n \in W^{Z \oplus G}_e)\}.
$$
Then $W$ is $\Sigma^Z_1$ and thus $W \neq A$.

Fix $n \in W \bigtriangleup A$. If $n \in W - A$ then let $G \in (\sigma, X)$ and $l > |\sigma|$ be such that $n \in W^{Z \uh l \oplus G \uh l}_e$ and let $(\tau, Y) = (G \uh l, X \cap (\max \tau, \infty))$. If $n \in A - W$ then let $(\tau, Y) = (\sigma, X)$.
\end{proof}

\subsection{Erd\H{o}s-Moser Principle}

A \emph{tournament} is a binary relation $R \subseteq \omega \times \omega$ induced by a coloring $c: [\omega]^2 \to 2$ in the following way:
$$
  (c(x,y) = 1 \to x R y) \wedge (c(x,y) = 0 \to y R x).
$$
A set $X$ is transitive for a tournament $R$ if $R \cap X^2$ is a transitive relation. \emph{Erd\H{o}s-Moser Principle} ($\EM$) is the assertion that every tournament admits an infinite transitive set.

\begin{theorem}\label{thm:EM-presv-D2}
$\EM$ admits preservation of $\Delta^0_2$-definitions.
\end{theorem}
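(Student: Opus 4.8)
The plan is to verify the hypothesis of Proposition \ref{pro:presv-equiv}(4) with $n = 1$: given $Z$, a tournament $R$ with $R \leq_T Z$ (an instance of $\EM$), and a sequence $(B_i : i < \omega)$ with no $B_i$ in $\Sigma^Z_1$, I must produce an infinite $R$-transitive set $T$ with no $B_i$ in $\Sigma^{Z \oplus T}_1$; by that proposition this yields preservation of $\Sigma^0_1$-, $\Pi^0_1$- and $\Delta^0_2$-definitions, in particular the asserted preservation of $\Delta^0_2$-definitions. The first move is to reduce to a stable tournament using the preservation property of $\COH$. Apply Theorem \ref{thm:COH-presv-D2} to the $Z$-computable sequence $(A_a : a < \omega)$ with $A_a = \{ b : a \mathrel{R} b \}$, obtaining an $(A_a)$-cohesive set $C$ with no $B_i$ in $\Sigma^0_1$ relative to $V := Z \oplus C$. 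For $a \in C$, cohesiveness gives $C \subseteq^* A_a$ or $C \subseteq^* \omega - A_a$, so $R$ restricted to $C$ is stable. Since a subset of $C$ that is transitive for $R \cap C^2$ is transitive for $R$, and $\Sigma^{Z \oplus T}_1 \subseteq \Sigma^{V \oplus T}_1$, it suffices to find an infinite $T \subseteq C$ transitive for $R$ with no $B_i$ in $\Sigma^V_1$.

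For this I would force over $V$ with Erd\H{o}s-Moser conditions in the style of Lerman, Solomon and Towsner. A condition is a pair $(\sigma, X)$ where $\sigma \in [C]^{<\omega}$ is $R$-transitive, $X \subseteq C$ is infinite with $\max \sigma < \min X$, $\sigma \cup \{x\}$ is $R$-transitive for every $x \in X$, and $X$ lies in the minimal $R$-interval determined by $\sigma$, the last clause being designed so that every finite $R$-transitive $\tau$ with $\sigma \subseteq \tau \subseteq \sigma \cup X$ is again the stem of a condition below $(\sigma, X)$. Here the cohesiveness of $C$ plays a decisive role: for a stem $\sigma$ arising along the construction, the minimal $R$-interval of $\sigma$ contains all but finitely many elements of $C$, and although identifying it needs a $\Delta^V_2$ oracle, it \emph{is} a $V$-computable set; consequently the reservoirs of conditions can be taken $V$-computable and extending a condition only intersects $V$-computable sets. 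A sufficiently generic filter yields an infinite $G \subseteq C$, the union of its stems, which is transitive for $R$.

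The heart of the argument, exactly parallel to Lemma \ref{lem:Mathias-prs-non-re}, is that for each $i$ and $e$ the conditions forcing $B_i \neq W^{V \oplus G}_e$ are dense. Given a condition $(\sigma, X)$, put
$$
  W = \{ n : (\exists \tau)(\tau \text{ is an } R\text{-transitive set}, \ \sigma \subseteq \tau \subseteq \sigma \cup X, \text{ and } n \in W^{V \oplus \tau}_e) \}.
$$
Because $R \leq_T V$ and $X$ is $V$-computable, $W$ is $\Sigma^V_1$, so $W \neq B_i$. Fix $n \in W \bigtriangleup B_i$. If $n \in W - B_i$, pick a witnessing $\tau$ and, shrinking its reservoir so that the oracle use of the computation putting $n$ into $W^{V \oplus \tau}_e$ is exceeded, refine $(\sigma, X)$ to a condition with stem $\tau$; then $n \in W^{V \oplus G}_e$ for the eventual generic. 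If $n \in B_i - W$, then every finite $R$-transitive $\tau$ with $\sigma \subseteq \tau \subseteq G$ has $\tau - \sigma \subseteq X$ and hence is counted in $W$, so $n \notin W^{V \oplus G}_e$ for any generic $G$ through $(\sigma, X)$, and $(\sigma, X)$ already forces $n \notin W^{V \oplus G}_e$. In either case $B_i \neq W^{V \oplus G}_e$ is forced, and ranging over all $i$ and $e$ gives the theorem.

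I expect the main obstacle to be the interval bookkeeping: making precise the notions of the minimal $R$-interval of a stem and of a condition, and proving that below every condition there is one whose reservoir is $V$-computable and in the minimal interval of its stem while still supporting the extension step used above (the analogue of the extension lemmas for the Mathias and $\Pi^0_1$ forcings earlier in this section). This is the entire point of first passing to a cohesive set: it is what keeps the reservoirs, and hence the set $W$, of the right complexity ($\Sigma^V_1$ rather than merely $\Sigma^V_2$). Once the forcing is set up, the transitivity of $G$ and the routine use-bounding in the $n \in W - B_i$ case are straightforward, and the density and genericity arguments are standard. Running the Erd\H{o}s-Moser forcing directly over $Z$ for an arbitrary $Z$-computable tournament is possible in principle but requires a more delicate treatment of minimal intervals and their complexity.
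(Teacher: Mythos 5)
There is a genuine gap, and it sits exactly where you wave it off as ``interval bookkeeping.'' Your density argument defines $W$ as the set of $n$ for which \emph{some} $R$-transitive $\tau$ with $\sigma \subseteq \tau \subseteq \sigma \cup X$ enumerates $n$, and in the case $n \in W - B_i$ you refine $(\sigma,X)$ to a condition with stem $\tau$. But such a $\tau$ need not be the stem of any condition at all: your clause ``$X$ lies in the minimal $R$-interval determined by $\sigma$'' constrains elements of $X$ only against elements of $\sigma$, not against each other, so $\tau - \sigma$ may contain a pair $a \mathrel{R} b$ ordered against the limit behavior (almost all $x \in C$ satisfy $x \mathrel{R} a$ and $b \mathrel{R} x$). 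For such a pair, $\tau \cup \{x\}$ contains the cycle $a \mathrel{R} b \mathrel{R} x \mathrel{R} a$ for all but finitely many $x \in C$, so no infinite reservoir inside $C$ exists and the extension step fails. You cannot repair this while keeping $W$ in $\Sigma^V_1$: to guarantee the witnessing $\tau$ is extendible you must require it to be ordered consistently with the limit coloring, but that coloring is only $\Delta^V_2$, so building it into the definition of $W$ pushes $W$ to $\Sigma^V_2$, which is useless for diagonalizing against sets that are merely non-$\Sigma^V_1$. This tension, not the cohesive-set reduction (which matches the paper's reduction of $\EM$ to $\COH + \SEM$), is the actual crux of the theorem.

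The paper resolves it differently: in its density lemma the set $W$ quantifies \emph{universally} over the $\Pi^0_1$ class $\mathcal{F}$ of all $2$-colorings $g$ compatible with the condition, asking that for every $g \in \mathcal{F}$ some $g$-compatible transitive extension enumerates $n$; compactness keeps this $\Sigma^0_1$ in the reservoir. In the case $n \in W - A_k$ the true limit coloring $f \in \mathcal{F}$ supplies a witnessing extension compatible with $f$, so the reservoir shrinks only finitely (the analogue of your ``interval'' step, now legitimate). In the case $n \in A_k - W$ one fixes a witnessing $g$ in a nonempty $\Pi^0_1$ subclass using the preservation property of $\WKL$ (Theorem \ref{thm:WKL-presv}) so that the target sets stay non-$\Sigma^0_1$ relative to $X \oplus g$, and thins the reservoir to $g^{-1}(0) \cap X$ or $g^{-1}(1) \cap X$. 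Note that the paper therefore does \emph{not} keep reservoirs computable in the ground set; instead the density lemma carries the hypothesis that the $A_i$ are non-$\Sigma^0_1$ relative to the current reservoir and re-establishes it for the new one. Your proposal is missing this mechanism (or some substitute for it, such as the Lerman--Solomon--Towsner machinery), and without it the core density claim does not go through.
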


With Theorem \ref{thm:COH-presv-D2}, we can reduce the above theorem to a preservation property of a consequence of $\EM$. If $R$ is a tournament such that either $(\forall^\infty y) x R y$ or $(\forall^\infty y) y R x$ for every $x$ then $R$ is \emph{stable}. $\SEM$ is $\EM$ restricted to stable tournaments. Clearly $\RCA + \COH + \SEM \vdash \EM$. So Theorem \ref{thm:EM-presv-D2} follows from Proposition \ref{pro:sep-by-presv}, Theorem \ref{thm:COH-presv-D2} and Lemma \ref{lem:SEM-presv-D2} below.

\begin{lemma}\label{lem:SEM-presv-D2}
For each $Z$ and a $Z$-computable stable tournament $R$ and each sequence $(A_i: i < \omega)$ of sets such that $A_i \not\in \Sigma^Z_1$ for all $i$, there exists an infinite $R$-transitive $G$ such that $A_i \not\in \Sigma^{Z \oplus G}_1$ for all $i$. So $\SEM$ admits preservation of $\Delta^0_2$-definitions.
\end{lemma}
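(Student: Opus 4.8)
The plan is to prove the displayed statement — from which the final sentence follows at once by Proposition \ref{pro:presv-equiv}(4) with $n=1$ (a $Z$-computable instance of $\SEM$ is coded by a stable tournament $R\leq_T Z$, and a $G$ as in the lemma is an acceptable $\SEM$-solution that avoids making any $A_i$ a $\Sigma^0_1$ set relative to $Z\oplus G$). So fix a $Z$-computable stable tournament $R$, induced by a coloring $c$, and a sequence $(A_i:i<\omega)$ with no $A_i$ in $\Sigma^Z_1$. Since $R$ is stable the function $x\mapsto f(x)=\lim_y c(x,y)$ is total; set $L=f^{-1}(1)=\{x:(\forall^\infty y)\,xRy\}$ and $\bar L=f^{-1}(0)$. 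At least one of $L,\bar L$ is infinite; say $L$ is (the case of $\bar L$ is symmetric, reversing the orientation of $R$). The key elementary observation is that if $G=\{g_0<g_1<\cdots\}$ is infinite and $g_iRg_j$ for all $i<j$, then $R$ restricted to $G$ is exactly the order $<$ on $G$, hence transitive; so it suffices to build such a $G$ inside $L$ while controlling $\Sigma^Z_1$-definitions.

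I would use a Mathias forcing whose conditions $(\sigma,X)$ consist of a finite $R$-transitive set $\sigma\subseteq L$ and an infinite $Z$-computable set $X$ with $\max\sigma<\min X$, such that $aRx$ for all $a\in\sigma$ and $x\in X$ (``$\sigma$ dominates $X$'') and $X\cap L$ is infinite; extension is inclusion of Mathias conditions. Keeping $X$ genuinely $Z$-computable is what makes the relevant value set $\Sigma^Z_1$, exactly as in Lemma \ref{lem:Mathias-prs-non-re}; the side conditions ``$\sigma\subseteq L$'' and ``$X\cap L$ infinite'' (which are arithmetical in $Z$, hence harmless as properties of conditions even though not $Z$-decidable) give extendability and transitivity. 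Indeed, to lengthen $\sigma$, take any $x_0\in X\cap L$: since $\sigma$ dominates $X$ we already have $aRx_0$ for all $a\in\sigma$, so $x_0$ lies $R$-above all of $\sigma$, whence $\sigma\cup\{x_0\}$ is $R$-transitive and contained in $L$; the new reservoir $\{x\in X:x>x_0\wedge x_0Rx\}$ is again $Z$-computable, is dominated by $\sigma\cup\{x_0\}$, and, because $x_0\in L$ makes $\{x:x_0Rx\}$ cofinite, still meets $L$ infinitely. Meeting the requirements $|\sigma|\geq k$ for all $k$ then yields an infinite $G\subseteq L$ which, by the observation above, is $R$-transitive.

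The heart of the proof is density of $D_{i,e}=\{(\sigma,X):(\sigma,X)$ forces $A_i\neq W^{Z\oplus G}_e\}$. Given $(\sigma,X)$, I would form
$$
W=\{n:(\exists\tau)[\,\sigma\subseteq\tau\subseteq\sigma\cup X,\ \tau\text{ is }R\text{-transitive},\ W^{Z\oplus\tau}_e\text{ enumerates }n\text{ with oracle use on the }G\text{-side}\leq\max\tau\,]\}.
$$
Since $X$ and $R$ are $Z$-computable and the use is bounded, $W$ is $\Sigma^Z_1$, so $W\neq A_i$. Let $W'$ be the ``non-spurious'' part of $W$: those $n$ witnessed by a $\tau$ whose successor reservoir $\{x\in X:x>\max\tau\wedge(\forall a\in\tau)\,aRx\}$ is infinite. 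As in the extendability step, such a witnessing $\tau$ is automatically contained in $L$, so $(\tau,\{x\in X:x>\max\tau\wedge(\forall a\in\tau)\,aRx\})$ is a legitimate condition below $(\sigma,X)$ that forces $n\in W^{Z\oplus G}_e$; conversely $W^{Z\oplus G}_e\subseteq W'\subseteq W$ for every generic $G$ below $(\sigma,X)$. Density of $D_{i,e}$ then follows by comparing $A_i$ with $W$ and with $W'$: if $A_i\not\subseteq W$, or $W'\subsetneq A_i$, then some $n$ lies in $A_i$ but in no $W^{Z\oplus G}_e$, so $(\sigma,X)$ already forces a disagreement; if $A_i\subseteq W$ and some $n\in W'\setminus A_i$, extend to the condition forcing $n\in W^{Z\oplus G}_e$; the only remaining configuration is $W'=A_i$, which has to be defeated by a further ``withholding'' extension keeping one element of the infinite set $W'=A_i$ out of $W^{Z\oplus G}_e$. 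Finally one builds a sufficiently generic filter meeting the countably many dense sets $D_{i,e}$ and $\{(\sigma,X):|\sigma|\geq k\}$, obtaining the desired $G$.

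The main obstacle is precisely this tension between the two roles of the reservoir: it must be $Z$-computable so that the value set $W$ is $\Sigma^Z_1$ (the mechanism behind non-preservation-avoidance), yet the generic is forced to avoid the half $\bar L$ of the tournament, and $\bar L$ is only $\Delta^Z_2$. This mismatch creates ``spurious'' potential stems $\tau$ in the search defining $W$ — those using elements of $\bar L$ — that can never be realized by an actual condition, and the real content of the density argument is showing that these spurious witnesses never obstruct $D_{i,e}$ (the case analysis above, and in particular the elimination of the case $W'=A_i$). Everything else — the relativization to $Z$, the verification that the forcing has the advertised extension and completeness properties, and the construction of the generic filter — is routine.
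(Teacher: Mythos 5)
Your setup (Mathias conditions with $Z$-computable reservoirs, stems that are $R$-increasing chains inside $L=\{x:(\forall^\infty y)\,xRy\}$, and the $\Sigma^Z_1$ value set $W$) is fine as far as it goes, but the proof has a genuine gap at exactly the point you flag as ``the real content'': the configuration $W'=A_i$ is never eliminated, and nothing in your framework supplies the ``withholding'' extension you invoke. Knowing $W\neq A_i$ only rules out the other cases; since $W'$ is defined using the $\Pi^Z_2$ side condition that the successor reservoir is infinite (equivalently $\tau\subseteq L$), $W'$ need not be $\Sigma^Z_1$, so $W'=A_i$ can perfectly well occur, and then a generic $G$ below $(\sigma,X)$ may satisfy $W^{Z\oplus G}_e=W'=A_i$. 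To defeat it you would need an extension $(\tau,Y)$, with $Y$ still $Z$-computable and meeting $L$ infinitely, such that for some $n\in A_i$ \emph{no} $R$-transitive extension of $\tau$ inside $Y$ computes $n$; you give no construction of such a $Y$, and the obstruction is structural: deciding which potential computations are ``spurious'' requires the $\Delta^Z_2$ information $L$, so a $\Sigma^Z_1$-definable thinning of the reservoir is precisely what is not available. Repeating your own analysis with any candidate $Z$-computable $Y$ just reproduces the same unresolved case.

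This is exactly where the paper's proof diverges, and the divergence is not cosmetic. There $W$ is defined with a universal quantifier over the $\Pi^{0,X}_1$ class $\mathcal{F}$ of all $2$-colorings $g$ compatible with $R$ on $\sigma\langle x\rangle$ for $x\in X$ (compactness keeps this $\Sigma^X_1$), so that in the hard case $n\in A_k\setminus W$ one obtains a witness $g\in\mathcal{F}$ such that no $g$-compatible $R$-transitive extension computes $n$; Theorem \ref{thm:WKL-presv} is then used to pick such a $g$ with no $A_i$ in $\Sigma^{X\oplus g}_1$, and the new reservoir is the infinite one of $g^{-1}(0)\cap X$, $g^{-1}(1)\cap X$. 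That reservoir is in general \emph{not} $Z$-computable; the invariant maintained is only that no $A_i$ is $\Sigma^0_1$ relative to the current reservoir, which is why the value sets stay usable at later stages. So repairing your argument means both redefining $W$ (quantifying over candidate limit colorings rather than over bare transitive stems) and abandoning the requirement that reservoirs be $Z$-computable in favor of this preservation invariant --- i.e., essentially adopting the paper's argument; restricting $G$ to a chain in $L$ neither causes nor cures the problem.
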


Below we prove the above lemma. Fix $Z$, $R$ and $(A_i: i < \omega)$ as in the statement of Lemma \ref{lem:SEM-presv-D2}. Without loss of generality, assume that $Z$ is computable. We build an infinite $R$-transitive $G$ as desired by Mathias forcing.

Let $f: \omega \to 2$ be as follows:
$$
    f(x) =
    \left\{
      \begin{array}{ll}
        0, & (\forall^\infty y) (x R y); \\
        1, & (\forall^\infty y) (y R x).
      \end{array}
    \right.
$$
By the stability of $R$, $f$ is total.

A Mathias condition $(\sigma, X)$ is \emph{acceptable}, if and only if
\begin{itemize}
    \item[(a1)] for all $x \in \sigma$ and $y \in X$,
        $$
            (f(x) = 0 \to x R y) \wedge (f(x) = 1 \to y R x);
        $$
    \item[(a2)] $\sigma\<y\>$ is $R$-transitive for all $y \in X$.
\end{itemize}

To build $G$, we build a descending sequence of acceptable Mathias conditions. Note that, if $(\sigma, X)$ is $R$-acceptable then
$$
    (\forall x \in X) (\forall a, b \in \sigma\<x\>) (a R b \to f(a) \leq f(b)).
$$
In general, if $g$ is an arbitrary $2$-coloring of $\omega$, then we say that \emph{$R$ and $g$ are compatible on $\sigma \in [\omega]^{<\omega}$} if
$$
    (\forall a, b \in \sigma) (a R b \to g(a) \leq g(b)).
$$

\begin{lemma}\label{lem:compatible-seq}
Let $(\sigma, X)$ be acceptable. If $\tau \in [X]^{<\omega}$ is such that $\sigma\tau$ is $R$-transitive and $R$ and $f$ are compatible on $\sigma\tau$, then $(\sigma\tau, X \cap (n, \infty))$ is acceptable for sufficiently large $n$.
\end{lemma}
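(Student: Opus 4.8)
The plan is to pick $n$ large enough to force both clauses in the definition of acceptability, with clause (a1) coming essentially for free from the definition of $f$, and clause (a2) reducing to a short $3$-cycle analysis that uses the transitivity of $\sigma\tau$ together with the compatibility hypothesis.

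First I would dispose of (a1). Since $\sigma\tau$ is finite and $f$ is total, for each $x \in \sigma\tau$ the definition of $f$ supplies a threshold $n_x$ with the property that $xRy$ for all $y > n_x$ when $f(x) = 0$, and $yRx$ for all $y > n_x$ when $f(x) = 1$. Taking $n$ to be the maximum of $\max(\sigma\tau)$ and of the finitely many $n_x$ for $x \in \sigma\tau$, the pair $(\sigma\tau, X \cap (n,\infty))$ is a genuine Mathias condition (and $X \cap (n,\infty)$ is still infinite, as $X$ is), and clause (a1) holds by the choice of the $n_x$.

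Next I would check (a2) for this same $n$: that $\sigma\tau\<y\>$ is $R$-transitive for every $y \in X \cap (n,\infty)$. Since $\sigma\tau$ is already $R$-transitive, any failure of transitivity must come from a $3$-cycle of the form $\{a,b,y\}$ with $a,b \in \sigma\tau$. By (a1), for $x \in \sigma\tau$ we have $xRy$ exactly when $f(x)=0$ and $yRx$ exactly when $f(x)=1$, so the orientation of the two edges incident to $y$ is completely dictated by $f$. In a cycle $aRb,\ bRy,\ yRa$ we would get $f(b)=0$ and $f(a)=1$, while compatibility of $R$ and $f$ on $\sigma\tau$ applied to $aRb$ forces $f(a) \le f(b)$, a contradiction; the reversed cycle $aRy,\ yRb,\ bRa$ is excluded symmetrically. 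Hence no such $3$-cycle arises and (a2) holds, so $(\sigma\tau, X \cap (n,\infty))$ is acceptable.

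The only mildly delicate point is the $3$-cycle argument in (a2); everything else is bookkeeping about thresholds. The key observation that makes it go through is that, after (a1) has been secured, the two edges at the fresh point $y$ are governed entirely by $f$, which is precisely what lets the compatibility hypothesis on $\sigma\tau$ do the work.
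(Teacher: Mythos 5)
Your proof is correct. It rests on the same two facts as the paper's: stability of $R$ makes the edges between a sufficiently large $y$ and the (finitely many) elements of $\sigma\tau$ entirely dictated by $f$, and compatibility of $R$ and $f$ on $\sigma\tau$ then rules out any transitivity failure. The packaging differs slightly: the paper enumerates $\sigma\tau$ in $R$-ascending order $a_0 R a_1 R \cdots R a_{k-1}$, notes that compatibility forces $f$ to be monotone along this order, and uses stability to show that almost all of $X$ inserts at the unique switch point of $f$, obtaining (a1) and (a2) simultaneously from that one partition; you instead secure (a1) directly from stability thresholds and then verify (a2) locally by excluding $3$-cycles through the new point $y$, using that a tournament is transitive iff it has no cyclic triangle. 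Your local check is a perfectly valid (and arguably cleaner) substitute for the insertion-slot argument, and it makes explicit that the edges from $y$ to \emph{all} of $\sigma\tau$ --- not just its two neighbours in the $R$-order --- are governed by $f$, a point the paper's phrasing leaves implicit; what you give up is only the explicit picture of where $y$ sits in the linear order, which the paper reuses implicitly when it records which tail $X_i$ of $X$ survives.
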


\begin{proof}
Let $\tau$ be as above. As $\sigma\tau$ is $R$-transitive, we can list elements of $\sigma\tau$ in $R$-ascending order:
$$
    a_0 R a_1 R \ldots R a_{k-1},
$$
where $k = |\sigma\tau|$. Let
\begin{gather*}
    X_0 = \{x \in X: x > \max \tau \wedge x R a_0\}, \\
    X_i = \{x \in X: x > \max \tau \wedge a_{i-1} R x R a_i\}, 0 < i < k, \\
    X_k = \{x \in X: x > \max \tau \wedge a_{k-1} R x\}.
\end{gather*}
By the stability of $R$ and that $R$ and $f$ are compatible on $\sigma\tau$, $X_i =^* X$ for a unique $i \leq k$. So, $\sigma\tau\<x\>$ is $R$-transitive for all $x \in X_i$. By the definition of $X_i$, for each $a \in \sigma\tau$, if $f(a) = 0$ then $a R x$ for all $x \in X_i$, otherwise $f(a) = 1$ and $x R a$ for all $x \in X_i$. Hence, $(\sigma\tau, X_i)$ is acceptable.
\end{proof}

So, if we can find sequences satisfying the condition of the above lemma then we can extend acceptable conditions.

\begin{lemma}\label{lem:EM-acceptable-longer}
Each acceptable $(\sigma,X)$ can be extended to an acceptable $(\tau, Y)$ such that $|\sigma| < |\tau|$ and $Y =^* X$.
\end{lemma}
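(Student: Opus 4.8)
The plan is to start from an acceptable condition $(\sigma, X)$ and produce a one-element extension of the stem by applying Lemma \ref{lem:compatible-seq} to a suitably chosen $\tau$ of length one, i.e., $\tau = \<x\>$ for a single $x \in X$. By Lemma \ref{lem:compatible-seq} it suffices to find $x \in X$ such that (i) $\sigma\<x\>$ is $R$-transitive and (ii) $R$ and $f$ are compatible on $\sigma\<x\>$; then $(\sigma\<x\>, X \cap (n,\infty))$ is acceptable for large $n$, and we are done with $\tau = \sigma\<x\>$, $Y = X \cap (n,\infty)$, which satisfies $|\sigma| < |\tau|$ and $Y =^* X$.

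The key point is that acceptability of $(\sigma, X)$ already buys us almost everything. First I would observe that for $x \in X$, requirement (ii) for $\sigma\<x\>$ reduces to checking only pairs involving $x$ (since $R$ and $f$ are compatible on $\sigma$ itself — this follows from (a1), as noted in the text right before Lemma \ref{lem:compatible-seq}), and in fact follows automatically from (a1): if $a \in \sigma$ and $a R x$ then $f(a) = 0$ forces nothing, while $f(a) = 1$ would give $x R a$ by (a1), contradicting $a R x$, so $f(a) = 0 \leq f(x)$; symmetrically if $x R a$ then $f(a) = 1$, so $f(x) \le 1 = f(a)$. Thus (ii) holds for every $x \in X$. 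For (i), the $R$-transitivity of $\sigma\<x\>$ for every $x \in X$ is precisely clause (a2) of acceptability. Hence every $x \in X$ works, and since $X$ is infinite we may pick any such $x$ and invoke Lemma \ref{lem:compatible-seq}.

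I expect the only mild subtlety — and the part to state carefully rather than the "hard part" — is the reduction of compatibility on $\sigma\<x\>$ to the pairs involving $x$, together with the bookkeeping that $n$ in Lemma \ref{lem:compatible-seq} can be taken larger than $\max\sigma$ and $\max X$-irrelevant so that $Y =^* X$ genuinely holds. There is no real obstacle here: the lemma is essentially a repackaging of acceptability plus one application of Lemma \ref{lem:compatible-seq}, and the whole argument is a few lines. One should just be mindful that the construction is relative to $Z$ and to the (non-computable, being $\emptyset'$-level) coloring $f$, but since we are only asserting existence of an acceptable extension and not computing it effectively, this causes no difficulty.
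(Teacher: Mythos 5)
Correct, and essentially the paper's own proof: the paper likewise extends by a single element of $X$ (it takes $x=\min X$), uses the remark preceding Lemma \ref{lem:compatible-seq} to get that $R$ and $f$ are compatible on $\sigma\<x\>$, and then applies Lemma \ref{lem:compatible-seq}, exactly as you do. One small imprecision: compatibility on pairs lying inside $\sigma$ does not follow from (a1) alone (a $3$-cycle through a point of $X$ is not ruled out without the transitivity clause (a2)), but it does follow from acceptability as the cited remark asserts, so your argument stands as written.
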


\begin{proof}
Let $x = \min X$. By the remark preceding Lemma \ref{lem:compatible-seq}, $R$ and $f$ are compatible on $\sigma\<x\>$. So, the lemma follows from Lemma \ref{lem:compatible-seq}.
\end{proof}

The following density lemma is the key of the proof.

\begin{lemma}\label{lem:EM-acceptable-density}
Let $(\sigma,X)$ be acceptable and $(A_i: i < \omega)$ be a sequence of sets such that $A_i \not\in \Sigma^{X}_1$ for all $i$. Then for every $e$ and every $k$ there exists an acceptable $(\tau,Y) \leq_M (\sigma,X)$ such that $A_i \not\in \Sigma^{Y}_1$ for all $i$ and $A_k \neq W^G_e$ for all $R$-transitive $G \in (\tau, Y)$.
\end{lemma}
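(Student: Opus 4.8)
The plan is to imitate the argument for Lemma~\ref{lem:Mathias-prs-non-re} inside the forcing of acceptable conditions, the essential new wrinkle being that a finite $R$-transitive extension of $\sigma$ inside $\sigma\cup X$ need not be an initial segment of any infinite $R$-transitive set, hence cannot always be promoted to the stem of an acceptable condition. Since we may assume $R$ is computable, I would first form the $\Sigma^0_1$ set
\[
  W=\{\,n:(\exists\rho)[\,\sigma\subseteq\rho\subseteq\sigma\cup X,\ \rho\text{ is }R\text{-transitive},\ n\in W^{\rho}_{e}\,]\,\},
\]
which is $\Sigma^0_1$ because the matrix ranges over finite $R$-transitive $\rho$ with $\sigma\subseteq\rho\subseteq\sigma\cup X$ (a decidable condition) and ``$n\in W^{\rho}_{e}$'' is semidecidable. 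As $A_k\notin\Sigma^0_1$, we get $W\neq A_k$, so we may fix $n\in W\bigtriangleup A_k$ and split into cases.

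If $n\in A_k\setminus W$, then $(\tau,Y):=(\sigma,X)$ already works: if some $R$-transitive $G\in(\sigma,X)$ enumerated $n$ into $W^{G}_{e}$, this would be caused by a finite initial segment $\rho\prec G$ which, being a subset of the $R$-transitive set $G$ with $\sigma\subset G\subseteq\sigma\cup X$, would witness $n\in W$; hence $n\notin W^{G}_{e}$ while $n\in A_k$, so $A_k\neq W^{G}_{e}$. If $n\in W\setminus A_k$ \emph{and} the witness $\rho$ can be chosen with $R$ and $f$ compatible on it, then writing $\rho=\sigma\tau$ with $\tau\in[X]^{<\omega}$, Lemma~\ref{lem:compatible-seq} gives an $m$ with $(\rho,X\cap(m,\infty))$ acceptable; taking $(\tau,Y):=(\rho,X\cap(m,\infty))$, every $R$-transitive $G$ in it contains $\rho$, so $n\in W^{\rho}_{e}\subseteq W^{G}_{e}$ while $n\notin A_k$, so again $A_k\neq W^{G}_{e}$. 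In both cases $Y=^{*}X$, so $A_i\notin\Sigma^{Y}_{1}$ is inherited from $A_i\notin\Sigma^{X}_{1}$.

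The main obstacle is the remaining case: $n\in W\setminus A_k$ but \emph{no} witness $\rho$ is $R,f$-compatible — such witnesses really can occur (a witness $\rho$ may contain $a,b$ with $aRb$ but $f(a)=1$ and $f(b)=0$, so that $\rho\<y\>$ has a $3$-cycle for all large $y$ and $\rho$ embeds in no infinite $R$-transitive set). A natural way to handle this is to work instead with the set $W^{g}$ defined like $W$ but with the extra clause ``$R$ and $f$ are compatible on $\rho$'', which is precisely the set of $n$ that can be forced into $W^{G}_{e}$ for \emph{all} $R$-transitive $G$ in some acceptable extension of $(\sigma,X)$ (forward direction by Lemma~\ref{lem:compatible-seq} and iteration of Lemma~\ref{lem:EM-acceptable-longer}; backward direction because every initial segment of an infinite $R$-transitive set is $R,f$-compatible). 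If $W^{g}\setminus A_k\neq\emptyset$ we finish as in the compatible sub-case; otherwise $W^{g}\subseteq A_k$, and we would locate $n\in A_k\setminus W^{g}$, observe that $(\sigma,X)$ then already forces $n\notin W^{G}_{e}$ for every $R$-transitive $G\in(\sigma,X)$, and take $(\tau,Y):=(\sigma,X)$. The delicate point — and what I expect to be the technical heart of the proof — is showing this dichotomy is exhaustive, i.e.\ that $A_k$ cannot coincide with $W^{g}$. This cannot follow from $\Sigma^0_1$-counting (indeed $W^{g}$ is only $\Sigma^0_2$); it has to be extracted from the stability of $R$ — the totality of the coloring $f$ and the way $R$-transitivity restricts $f$ — showing that the $\Sigma^0_1$ computation defining $W$ cannot be ``defeated'' solely by non-realizable witnesses, so that a genuine point of difference between $A_k$ and $W^{G}_{e}$ is always available.
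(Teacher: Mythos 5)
Your first two cases are fine, and you have correctly isolated where the difficulty lies; but the proof stops exactly there, and the gap is real. Your fallback set $W^{g}$ (witnesses required to be $R,f$-compatible) is only $\Sigma^0_2$, since $f$ is a limit coloring, so nothing rules out $W^{g}=A_k$, and your closing hope that the dichotomy ``$\exists n\in W^{g}\setminus A_k$ or $\exists n\in A_k\setminus W^{g}$'' is exhaustive is precisely what is not proved --- the paper never proves any such fact, and stability of $R$ alone does not obviously yield it. The paper circumvents the problem by changing the definition of $W$ rather than trying to compare $A_k$ with a $\Sigma^0_2$ set: let $\mathcal{F}$ be the $\Pi^{X}_1$ class of \emph{all} $2$-colorings $g$ compatible with $R$ on $\sigma\<x\>$ for every $x\in X$ (so $f\in\mathcal{F}$), and put $n\in W$ iff \emph{for every} $g\in\mathcal{F}$ there is $\xi\in[X]^{<\omega}$ with $\sigma\xi$ $R$-transitive, $R,g$-compatible, and $n\in W^{\sigma\xi}_e$. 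By compactness of $\mathcal{F}$ this $W$ is $\Sigma^{X}_1$, so $W\neq A_k$, and the two cases both close: if $n\in W\setminus A_k$, instantiate the universal quantifier at $g=f$ and extend via Lemma \ref{lem:compatible-seq} as you did; if $n\in A_k\setminus W$, there is \emph{some} $g\in\mathcal{F}$ (not $f$!) admitting no $g$-compatible witness, and one keeps the stem $\sigma$ but shrinks the reservoir to the infinite part of $g^{-1}(0)\cap X$ or $g^{-1}(1)\cap X$, on which every finite extension is automatically $g$-compatible, so that $n\notin W^{G}_e$ for every $R$-transitive $G$ in the new condition.

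Note the second structural point you missed: in that hard case the new reservoir $Y$ is \emph{not} $=^{*}X$ (your proposal tacitly assumed it always would be, which is why non-$\Sigma^0_1$-ness of the $A_i$ seemed free). Since $Y$ is computed from $X\oplus g$ with $g$ a member of a $\Pi^{X}_1$ class, the paper invokes its $\WKL$ preservation theorem (Theorem \ref{thm:WKL-presv}) to choose $g$ so that no $A_i$ becomes $\Sigma^0_1$ relative to $X\oplus g$, hence relative to $Y$. Without that ingredient --- or some substitute for it --- the requirement $A_i\notin\Sigma^{Y}_1$ cannot be maintained, so the missing case of your argument cannot be repaired within the framework you set up.
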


\begin{proof}
Let $\mathcal{F}$ be the set of $g: \omega \to 2$ such that $R$ and $g$ are compatible on $\sigma\<x\>$ for all $x \in X$. Then $\mathcal{F}$ can be identified with a $\Pi^{X}_1$ class in Cantor space and $f \in \mathcal{F}$. Let $W$ be the set of $n$ such that for every $g \in \mathcal{F}$ there exists $\xi \in [X]^{<\omega}$ satisfying
$$
     \sigma\xi \text{ is $R$-transitive } \wedge \text{$R$ and $g$ are compatible on $\sigma\xi$} \wedge n \in W^{\sigma\xi}_e.
$$
By the compactness of $\mathcal{F}$, $W \in \Sigma^{X}_1$ and thus $W \neq A_k$. Fix $n \in W \bigtriangleup A_k$.

\emph{Case 1.} $n \in W - A_k$. By the definition of $W$ and that $f \in \mathcal{F}$, we can pick $\xi \in [X]^{<\omega}$ such that $\sigma\xi$ is $R$-transitive, $R$ and $f$ are compatible on $\sigma\xi$ and $n \in W^{\sigma\xi}_e$. Apply Lemma \ref{lem:compatible-seq} to $(\sigma,X)$ and $\xi$, we can obtain a desired extension $(\tau,Y)$ with $\tau = \sigma\xi$ and $Y =^* X$.

\emph{Case 2.} $n \in A_k - W$. Let $\mathcal{G}$ be the set of $g \in \mathcal{F}$ such that
$$
    (\sigma\xi \text{ is $R$-transitive} \wedge \text{$R$ and $g$ are compatible on $\sigma\xi$}) \to n \not\in W^{\sigma\xi}_e
$$
for all $\xi \in [X]^{<\omega}$.
By the preservation property of $\WKL$ (Theorem \ref{thm:WKL-presv}), we can pick $g \in \mathcal{G}$ such that $A_i \not\in \Sigma^{X \oplus g}_1$ for all $i$. If $g^{-1}(0) \cap X$ is infinite then let $Y = g^{-1}(0) \cap X$, otherwise let $Y = g^{-1}(1) \cap X$. Thus, $Y \leq_T X \oplus g$ and $A_i \not\in \Sigma^Y_1$ for all $i$.

\begin{claim}
If $\xi \in [Y]^{<\omega}$ then $R$ and $g$ are compatible on $\sigma\xi$.
\end{claim}

\begin{proof}
It suffices to show that $R$ and $g$ are compatible on arbitrary $(a, b) \in [\sigma\xi]^2$. Note that $a < b$. If $a \in \sigma$, then $R$ and $g$ are compatible on $\<ab\>$ as $g \in \mathcal{F}$. If $a$ and $b$ are both in $\xi$, then $g(a) = g(b)$ and thus $R$ and $g$ are compatible on $\<ab\>$ as well.
\end{proof}

Hence, if $\xi \in [Y]^{<\omega}$ and $\sigma\xi$ is $R$-transitive then $n \not\in W^{\sigma\xi}_e$, by the choice of $g$. So, $(\sigma, Y)$ is a desired extension.
\end{proof}

Lemma \ref{lem:SEM-presv-D2} follows from Lemmata \ref{lem:EM-acceptable-longer} and \ref{lem:EM-acceptable-density}.

\subsection{Rainbow Ramsey Theorem}

From Proposition \ref{pro:random-presv} and \cite[Theorem 3.1]{Csima.Mileti:2009.rainbow}, we obtain a preservation property of $\RRT^2_2$.

\begin{corollary}\label{cor:RRT22-presv}
$\RRT^2_2$ admits preservation of the arithmetic hierarchy.
\end{corollary}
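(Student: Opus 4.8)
The plan is to derive the corollary by combining the preservation property of sufficiently random reals (Proposition~\ref{pro:random-presv}) with the known computability-theoretic fact that solutions to $\RRT^2_2$-instances can always be found computably in a sufficiently random real. Recall that $\RRT^2_2$ is the statement that every $2$-bounded coloring $c:[\omega]^2 \to \omega$ (i.e.\ one in which no color is used more than twice) admits an infinite \emph{rainbow}, that is, an infinite set on whose pairs $c$ is injective. By \cite[Theorem 3.1]{Csima.Mileti:2009.rainbow}, for every $X$ and every $X$-computable $2$-bounded coloring $c$, there is a rainbow for $c$ computable in any sufficiently $X$-random real; more precisely, relative to $X$, every $2$-random (in fact every sufficiently random real in the relevant measure-theoretic sense) computes an infinite rainbow for $c$.

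Given this, I would argue as follows. Fix $Z$ and $X \leq_T Z$, and let $c$ be an $X$-computable $2$-bounded coloring, so $c$ is a $\Phi$-instance for $\Phi = \RRT^2_2$. Choose $R$ to be sufficiently random relative to $Z$; since $X \leq_T Z$, the real $R$ is also sufficiently random relative to $X$, so by Csima--Mileti there is an infinite rainbow $H$ for $c$ with $H \leq_T X \oplus R \leq_T Z \oplus R$. It remains to check that $Z \oplus R$ preserves the arithmetic hierarchy relative to $Z$; but this is exactly what Proposition~\ref{pro:random-presv} gives, since $R$ sufficiently random relative to $Z$ preserves the arithmetic hierarchy relative to $Z$, and adjoining $Z$ to a set that preserves the arithmetic hierarchy relative to $Z$ changes nothing (the relevant relativized hierarchies $\Xi^{Z \oplus R}$ and $\Xi^{Z \oplus (Z \oplus R)}$ coincide). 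Hence $Y := H$ is an $\RRT^2_2$-solution to the instance $c$ such that $Y \leq_T Z \oplus R$, and consequently any properly $\Xi$ set relative to $Z$ remains properly $\Xi$ relative to $Z \oplus Y$, because it is already properly $\Xi$ relative to the larger set $Z \oplus R$ and $\Xi$-definability relative to a weaker oracle is absolute upward only when the set stays \emph{properly} $\Xi$ --- so one should phrase this via the contrapositive, exactly as in the proof of Proposition~\ref{pro:random-presv}: if $A$ were $\Sigma^{Z \oplus Y}_n$ it would be $\Sigma^{Z \oplus R}_n$, contradicting preservation.

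The one genuine subtlety --- and the step I would be most careful about --- is matching the notion of ``sufficiently random'' needed by the Csima--Mileti argument with the notion of ``sufficiently random'' appearing in Proposition~\ref{pro:random-presv}. Both are instances of ``avoids sufficiently many null sets'', but one must confirm that a single real can simultaneously satisfy both demands; this is automatic because the union of two sets of null-set-avoidance requirements is again such a set, and a $1$-random relative to a sufficiently powerful oracle (or simply a real avoiding the countable union of the relevant null sets) meets both. Since the excerpt phrases Proposition~\ref{pro:random-presv} in exactly this ``avoids sufficiently many Lebesgue null sets'' style, and \cite[Theorem 3.1]{Csima.Mileti:2009.rainbow} can be read the same way, there is no real obstacle, only a bookkeeping point about quantifier order: first fix $Z$, then the finitely (or countably) many null sets coming from the preservation proof together with those coming from the rainbow construction relative to $X \leq_T Z$, then pick $R$ outside all of them. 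With that in hand the corollary is immediate, and I would present the proof in just two or three sentences citing Proposition~\ref{pro:random-presv} and \cite[Theorem 3.1]{Csima.Mileti:2009.rainbow}.
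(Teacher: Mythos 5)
Your proposal is correct and is essentially the paper's own argument: the corollary is obtained exactly by combining Proposition \ref{pro:random-presv} with \cite[Theorem 3.1]{Csima.Mileti:2009.rainbow}, using the fact that a rainbow computable in the sufficiently random real inherits the preservation property since $\Sigma^{Z \oplus H}_n \subseteq \Sigma^{Z \oplus R}_n$ when $H \leq_T Z \oplus R$. Your bookkeeping point about choosing one real avoiding all the countably many relevant null sets is the right (and only) subtlety, and it is handled just as you describe.
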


We shall see in Corollary \ref{cor:RRT-n-presv} that $\RRT^n_2$ in general does not have the preservation property in the above corollary. But some weaker preservation holds for $\RRT^3_2$.

\begin{theorem}\label{thm:RRT32-presv-D3}
$\RRT^3_2$ admits preservation of $\Delta^0_3$-definitions.
\end{theorem}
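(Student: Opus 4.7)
The plan is to apply Proposition \ref{pro:presv-equiv}(4) with $n=2$, reducing the theorem to the following: given $Z$, a $Z$-computable $2$-bounded coloring $c:[\omega]^3\to\omega$, and a sequence $(A_i:i<\omega)$ with $A_i\notin\Sigma^Z_2$ for every $i$, produce an infinite rainbow set $H$ for $c$ such that $A_i\notin\Sigma^{Z\oplus H}_2$ for every $i$. Without loss of generality $Z$ is computable.

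The natural decomposition is to view $\RRT^3_2$ as the composition of a cohesion-like step with a pair-rainbow step, and to run each step with its own preservation. For the first step, for each finite increasing tuple $\sigma\in[\omega]^{<\omega}$ and each color $k$, consider the $\Delta^{Z'}_1$ set of $x>\max\sigma$ such that there exists $y>x$ with $c(\tau,x,y)=k$ for some $\tau\subseteq\sigma$; feeding these together with the $\Sigma^{Z'}_1$-approximations of a candidate rainbow set into a cohesive-type construction produces an infinite reservoir $R$ on which the triple coloring $c$ becomes ``effectively stable'' relative to $Z'$, in the sense that the rainbow Ramsey problem for $c\uh[R]^3$ reduces to a $2$-bounded pair problem relative to $Z\oplus R$. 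I would set up this cohesive construction by a $\Pi^0_1$-forcing relative to $Z'$ modeled on the proof of Theorem \ref{thm:WKL-presv}, so that the resulting reservoir $R$ preserves $\Sigma^0_1$-definitions relative to $Z'$, equivalently preserves $\Sigma^0_2$-definitions relative to $Z$ (once one checks that $(Z\oplus R)'\equiv_T Z'\oplus R$, which will come for free from the low-ness of the $\Pi^0_1$-generic over $Z'$).

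For the second step, working inside $R$ I obtain a $2$-bounded pair coloring $\tilde c$ computable from $Z\oplus R$ whose infinite rainbow sets give infinite rainbows for $c$ inside $R$. I would then invoke (a relativization of) Corollary \ref{cor:RRT22-presv}, which says that $\RRT^2_2$ admits preservation of the arithmetic hierarchy, to obtain an infinite rainbow $H\subseteq R$ for $\tilde c$ such that every $A_i$, being already non-$\Sigma^{Z\oplus R}_2$ by the first step, remains non-$\Sigma^{Z\oplus R\oplus H}_2$. Since $H\subseteq R$, we have $Z\oplus H\leq_T Z\oplus R\oplus H$; the two-step argument thus delivers an $H$ with $A_i\notin\Sigma^{Z\oplus H}_2$ for every $i$, which is exactly what Proposition \ref{pro:presv-equiv}(4) requires.

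The hard part is the first step: crafting the cohesion-like forcing so that the generic reservoir $R$ both (i) trivializes the triple coloring to a manageable pair coloring on $R$ in a $Z$-effective way, and (ii) is sufficiently low over $Z$ that $(Z\oplus R)'\equiv_T Z'\oplus R$, so that a non-$\Sigma^Z_2$ set translates faithfully into a non-$\Sigma^{Z'}_1$ set whose preservation can be guaranteed by a $\Pi^{Z'}_1$-forcing argument parallel to Theorem \ref{thm:WKL-presv}. Once the forcing relation at the $Z'$ level is shown to be $\Sigma^{Z'}_n$ on $\Sigma^{Z'}_n$ formulas (by a definability lemma analogous to Lemma \ref{lem:Pi01-forcing-definability}), the density argument preserving a $\Sigma^{Z'}_1$-definability problem runs essentially as in Lemma \ref{lem:Pi01-forcing-presv}, and the rest is bookkeeping.
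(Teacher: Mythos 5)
Your overall decomposition (a cohesion-type step followed by a rainbow step for a stable $2$-bounded pair coloring) is the same skeleton the paper uses, but there is a genuine gap at the hinge between the two steps. After any cohesion-like step, the induced pair coloring is the \emph{limit} coloring $\tilde c(\sigma)=\lim_{x\in R}c(\sigma\<x\>)$, and this is only computable from $(Z\oplus R)'$, not from $Z\oplus R$. So your second step cannot be an invocation of Corollary \ref{cor:RRT22-presv}: that corollary applies to instances computable in the ground set, whereas here the instance lives one jump up. If you instead relativize the $\RRT^2_2$ preservation to $(Z\oplus R)'$, you only preserve non-$\Sigma_2$-ness relative to the jump, i.e.\ non-$\Sigma^{Z\oplus R}_3$-ness, which loses exactly the quantifier level the theorem is about. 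The same level-mismatch infects the first step: what is needed is not that $R$ preserve $\Sigma^0_1$-definitions relative to $Z'$, but that it (and everything built afterwards) preserve non-$\Sigma^Z_2$-ness, i.e.\ $\Delta^0_3$-definitions relative to $Z$; the bridge $(Z\oplus R)'\equiv_T Z'\oplus R$ you hope for is not ``for free'' from a $\Pi^0_1$-style forcing over $Z'$ (your $R$ is a reservoir set, not a path through a binary tree, so the machinery of Theorem \ref{thm:WKL-presv} does not even apply directly, and a Mathias-type generic over $Z'$ need not satisfy that jump identity unless the construction explicitly controls the jump of $Z\oplus R$).

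The paper's proof confronts exactly this obstacle with two dedicated jump-level lemmas that your outline would need but does not supply: Lemma \ref{lem:jump-cohesiv-presv-D3} (every $Z'$-computable family admits a cohesive set preserving $\Delta^0_3$-definitions \emph{relative to $Z$}) and Lemma \ref{lem:RRT22-presv-D3} (every $Z'$-computable, stable, $2$-bounded pair coloring admits a rainbow preserving $\Delta^0_3$-definitions \emph{relative to $Z$}). These are the technical heart of the theorem; they are proved by the forcing machinery of \cite{Wang:2014} with low reservoirs/conditions, where one checks that the relevant ``largeness''/test notions are $\Sigma^0_2$ over a low condition, so that the usual diagonal argument against a non-$\Sigma^0_2$ set can be run. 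They do not follow from Theorem \ref{thm:WKL-presv} or Corollary \ref{cor:RRT22-presv}. (The first, ground-level cohesion step in the paper is handled by Corollary \ref{cor:RT22-presv} applied to a $Z$-computable family extracted from $c$, which is the part of your plan that does work.) To repair your proposal you would have to replace the appeal to Corollary \ref{cor:RRT22-presv} and the WKL-style forcing by proofs of these two jump-level preservation lemmas, which is where essentially all the work lies.
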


To prove the above theorem, we need the following lemmata, which are proved based on the forcing construction in \cite[\S 3.2 and \S4.5]{Wang:2014}. For the new notions appeared below, we refer the reader to \cite[\S 3.2 and \S4.5]{Wang:2014}. The first lemma is an analogous of \cite[Theorem 3.4]{Wang:2014}.

\begin{lemma}\label{lem:jump-cohesiv-presv-D3}
Every $Z'$-computable $\vec{A} = (A_n: n < \omega)$ admits a cohesive set preserving $\Delta^0_3$-definitions relative to $Z$.
\end{lemma}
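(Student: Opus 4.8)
The plan is to run the argument of Lemma~\ref{lem:Mathias-prs-non-re} one jump higher, replacing ordinary Mathias forcing with its $Z'$-effective refinement from \cite[\S3.2]{Wang:2014}. The construction will be uniform in $Z$, so we may assume $Z$ is computable; then $\vec{A}$ is $\emptyset'$-computable. By Proposition~\ref{pro:presv-equiv} (together with the fact, used also in Proposition~\ref{pro:presv-equiv}(4), that there are only countably many arithmetical sets), it suffices to fix an arbitrary sequence $(B_i: i < \omega)$ with no $B_i$ in $\Sigma^0_2$ and build an $\vec{A}$-cohesive set $C$ with no $B_i$ in $\Sigma^C_2$. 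Since $\Sigma^C_2 = \Sigma^{C'}_1$ is the collection of sets computably enumerable in $C'$, and $C' \equiv_T (Z \oplus C)'$, the target is exactly the situation of Lemma~\ref{lem:Mathias-prs-non-re} with every relevant complexity raised by one jump.

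For this I would use the forcing notion $\mathbb{M}$ of \cite[\S3.2]{Wang:2014}: a Mathias-style forcing whose conditions are pairs $(\sigma, X)$, with $\sigma \in [\omega]^{<\omega}$ and $X$ an infinite reservoir drawn from the ``tame'' family of $\emptyset'$-computable sets isolated there --- a family closed (up to finite differences) under thinning by the sets $A_n$, but whose membership and refinement relations have carefully bounded complexity, in the spirit of the passage to primitively recursive subtrees in the proof of Theorem~\ref{thm:WKL-presv}. From \cite[\S3.2 and \S4.5]{Wang:2014}, after the obvious relativization, I would import: (i) a forcing relation $\Vdash$ for formulas of the language of arithmetic augmented by a unary predicate $\dot{G}$, satisfying the truth lemma --- a sufficiently generic $G$ satisfies an arithmetic $\varphi$ if and only if some condition in its filter forces $\varphi$ (the analogue of Lemma~\ref{lem:Pi01-forcing-completeness}); (ii) the definability of $\Vdash$: for each $\Sigma^0_2$ (resp.\ $\Pi^0_2$) formula $\varphi(\dot{G})$, the predicate ``$(\sigma, X) \Vdash \varphi$'' is $\Sigma^0_2$ (resp.\ $\Pi^0_2$), uniformly in the condition (the analogue of Lemma~\ref{lem:Pi01-forcing-definability}, the \S4.5 bookkeeping being exactly what keeps the quantifier over extensions in the negative clause from costing more than one $\Sigma^0_1$-quantifier); and (iii) for each $n$ the density of the set of conditions deciding whether $\dot{G} \subseteq^* A_n$ or $\dot{G} \subseteq^* \omega - A_n$, realized by thinning the reservoir within the tame family and lengthening $\sigma$ --- an operation that keeps us in $\mathbb{M}$ and hence does not disturb (ii).

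On top of this I would run the preservation density argument. Fix a $\Sigma^0_2$ formula $\varphi(\dot{G}, x)$ and an index $i$; I claim the set of conditions forcing $B_i \neq \{x: \varphi(\dot{G}, x)\}$ is dense. Given $(\sigma, X) \in \mathbb{M}$, set
$$
    W = \{x: (\exists q \leq (\sigma, X))\,(q \Vdash \varphi(\dot{G}, x))\}.
$$
By (ii), $W \in \Sigma^0_2$, so $W \neq B_i$; fix $n \in W \bigtriangleup B_i$. If $n \in W - B_i$, extend $(\sigma, X)$ to some $q \leq (\sigma,X)$ with $q \Vdash \varphi(\dot{G}, n)$; then along any generic through $q$ we have $\varphi(G, n)$ while $n \notin B_i$. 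If $n \in B_i - W$, then no extension of $(\sigma, X)$ forces $\varphi(\dot{G}, n)$, so $(\sigma,X) \Vdash \neg\varphi(\dot{G}, n)$ by the negative forcing clause, and along any generic through $(\sigma, X)$ we have $\neg\varphi(G, n)$ while $n \in B_i$. Either way, $B_i \neq \{x: \varphi(G, x)\}$. Now build a descending sequence of conditions meeting all these density sets (over all $\Sigma^0_2$ formulas $\varphi$ and all $i$), all the cohesiveness density sets of (iii), and enough length-increasing conditions, and let $C = \bigcup_k \sigma_k$. Then $C$ is infinite and $\vec{A}$-cohesive, and $B_i \neq \{x: \varphi(C, x)\}$ for every $\Sigma^0_2$ formula $\varphi$ and every $i$, so no $B_i$ is $\Sigma^C_2$. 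By the reduction of the first paragraph and Proposition~\ref{pro:presv-equiv}, $C$ preserves $\Delta^0_3$-definitions relative to $Z$.

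I expect the only real obstacle to be point (ii): with the reservoirs now living at the $\Delta^0_2$ level and the negative forcing clause ranging over all refinements, one must still keep the forcing relation for $\Sigma^0_2$ formulas at the $\Sigma^0_2$ level. This is exactly what the ``tame'', primitively-recursive-style presentation of the reservoir family in \cite[\S4.5]{Wang:2014} is engineered to guarantee --- it plays here the role that Lemma~\ref{lem:Pi01-forcing-Pr} plays in the proof of Theorem~\ref{thm:WKL-presv}. Checking that this complexity bookkeeping is undisturbed by the relativization from $\emptyset'$ to $Z'$, and that it remains intact under the reservoir thinnings required for cohesiveness in (iii), is the delicate part; the rest is a routine transcription of Lemma~\ref{lem:Mathias-prs-non-re}.
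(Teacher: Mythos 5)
Your reduction (assume $Z$ computable, fix a countable list $(B_i)$ of non-$\Sigma^0_2$ sets, build an $\vec{A}$-cohesive $C$ with no $B_i$ in $\Sigma^C_2$, then invoke Proposition \ref{pro:presv-equiv}) and your use of the s-m-n trick to turn ``$m$ is in the $e$-th $\Sigma^C_2$ set'' into ``$\Phi_{s(e,m)}(C)$ is partial'' both match the paper. The gap is your step (ii): the claim that one can arrange a Mathias-style forcing, over a reservoir family closed under thinning by the $A_n$, whose forcing relation for $\Sigma^0_2$ formulas is itself $\Sigma^0_2$. Nothing in \cite[\S 3.2 or \S 4.5]{Wang:2014} supplies such a definable forcing relation, and the analogy with Lemma \ref{lem:Pi01-forcing-Pr} breaks down precisely here: in the $\WKL$ argument the subconditions of $T$ form a countable, $\Sigma^0_1$-presented family of primitively recursive subtrees, whereas for Mathias forcing the negative clause quantifies over arbitrary infinite subreservoirs (a real quantifier), and any attempt to replace these by an indexed countable family runs into the fact that cohesiveness forces the reservoirs up to the $\Delta^0_2$ level, where already ``the $e$-th $\Delta^0_2$ index describes an infinite set'' is $\Pi^0_3$. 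So your set $W=\{x:(\exists q\leq p)(q\Vdash\varphi(\dot G,x))\}$ has no warrant for being $\Sigma^0_2$, and with it the whole density argument collapses. (Your citation of the \S 4.5 conditions $(\sigma,X,\vec h)$ is also misplaced: those belong to the rainbow construction used in Lemma \ref{lem:RRT22-presv-D3}, not to the cohesive-set lemma.)

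What the paper does instead is exactly a workaround for this failure. Forcing ``$m$ is not in the $e$-th $\Sigma^G_2$ set'' means forcing $\Phi_{s(e,m)}(G)$ total, and whether totality can still be forced after thinning the reservoir to one side of $A_n$ is not decidable within the complexity budget; it may survive on one side and not the other. The paper therefore abandons a single descending Mathias sequence and builds \emph{multiple} Mathias conditions $((\sigma_\mu:\mu\in I),X)$ with \emph{low} reservoirs, where $\mu\in 2^{<\omega}$ records which sides of $A_0,\dots,A_{|\mu|-1}$ have been chosen, together with largeness vectors $\vec e_\mu$ imported from \cite[\S 3.2]{Wang:2014}. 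The role of your definability claim is played by smallness: the set $W=\{m: q \text{ is small for the vector augmented by } s(e,m)\}$ is $\Sigma^0_2$ because the reservoir is low (\cite[Definition 3.7]{Wang:2014}), so $W\neq B_i$ and one diagonalizes as you intended --- but in the smallness case one must pass to an $\vec A$-branching (with the Low Basis Theorem keeping the new reservoirs low) to force partiality on the surviving branches, and since one cannot tell on which side of the next $A_n$ largeness persists, \emph{both} children are kept. The output is thus an infinite binary tree of stems, every path of which yields a cohesive set preserving the non-$\Sigma^0_2$-ness of all $B_i$; this tree structure, the largeness/smallness machinery, and the Low Basis applications are the essential content missing from your proposal, and they are considerably more than ``a routine transcription of Lemma \ref{lem:Mathias-prs-non-re}.''
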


\begin{proof}
It suffices to show that for every fixed $\vec{B} = (B_n: n < \omega)$ with no $B_n$ being $\Sigma^Z_2$ there exists an $\vec{A}$-cohesive $G$ such that no $B_n$ is $\Sigma^{Z \oplus G}_2$ either. For simplification, we assume that $Z = \emptyset$ and the reader can find that the proof below is relativizable.

We build an infinite binary tree $T$ and a map $f: T \to [\omega]^{<\omega}$ such that
\begin{itemize}
    \item $f$ is strictly increasing, i.e., if $\mu$ is a proper initial segment of $\nu \in T$ then $f(\mu)$ is a proper initial segment of $f(\nu)$, and
    \item if $P \in [T]$ and $G = \bigcup_m f(P \uh m)$ then $G$ is $\vec{A}$-cohesive and $B_n \not\in \Sigma^{G}_2$ for every $n$.
\end{itemize}
Then we can take $G = f(P) = \bigcup_m f(P \uh m)$ for any $P \in [T]$. We use multiple Mathias conditions of the form $((\sigma_\mu: \mu \in I), X)$ where $X$ is low (so non-$\Sigma^0_2$ sets are exactly non-$\Sigma^{X}_2$ sets), and build a sequence $(p_n: n < \omega)$ of decreasing conditions such that
\begin{itemize}
    \item each $p_n = ((\sigma_{n,\mu}: \mu \in I_n), X_n)$ and $I_0 = \{\emptyset\}$, $\sigma_{0,\emptyset} = \emptyset$ and $X_0 = \omega$,
    \item $I_n \neq I_{n+1}$ and if $\mu \in I_{n}$ then either $\mu \in I_{n+1}$ or both $\mu\<0\>$ and $\mu\<1\>$ are in $I_{n+1}$, and
    \item if $\mu \in I_{n+1} - I_n$ then $\sigma_{n,\mu^-}$ is a proper initial segment of $\sigma_{n+1,\mu}$ where $\mu^- = \mu \uh (|\mu| - 1)$.
\end{itemize}
So $T = \bigcup_n I_n$ is an infinite binary tree and $f: \mu \mapsto \sigma_{n,\mu}$ where $n = |\mu|$ is a strictly increasing map from $T$ to $[\omega]^{<\omega}$.

Below we build the desired $(p_n: n < \omega)$. By s-m-n theorem, we fix a computable function $s$ such that for any $C$ a number $n$ is in the $e$-th $\Sigma^C_2$ set if and only if $\Phi_{s(e,n)}(C)$ is partial where $\Phi_i$ is the $i$-th oracle Turing machine. For each $\mu \in 2^{<\omega}$, let $A_\mu = \bigcap_{\mu(i) = 1} A_i \cap \bigcap_{\mu(i) = 0} (\omega - A_i)$.

\begin{claim}\label{clm:jump-cohesiv-presv-D3}
Suppose that $p = ((\sigma_\mu: \mu \in I), X)$ is $(\vec{e}_\mu: \mu \in I)$-large and $X$ is low. Then for every $n$ and $e$ there exist $q = ((\tau_\nu: \nu \in J), Y)$ and $(\vec{d}_\nu: \nu \in J)$ such that
\begin{enumerate}
    \item $q$ is a $(\vec{d}_\nu: \nu \in J)$-large extension of $p$ and $Y$ is low;
    \item $J \neq I$ and if $\mu \in I$ then either $\mu \in J$ or both $\mu\<0\>$ and $\mu\<1\>$ are in $J$;
    \item if $\nu \in J$ extends $\mu \in I$ then $\tau_\nu - \sigma_\mu \in X \cap A_\mu$;
    \item if $\nu \in I \cap J$ then $\vec{d}_\nu = \vec{e}_\nu$ and for every $G \in (\sigma_\nu, X \cap A_\mu)$ there exists $i \in \vec{e}_\nu$ with $\Phi_i(G)$ partial;
    \item if $\nu \in J - I$ then $\dom \Phi_i(\tau_\nu) > \dom \Phi_i(\sigma_{\nu^-})$ for all $i \in \vec{e}_{\nu^-}$, and for some $m$ and $j = s(e,m)$ either $m \in B_n$ and $\vec{d}_\nu = \vec{e}_{\nu^-}\<j\>$ or $m \not\in B_n$ and $\vec{d}_{\nu} = \vec{e}_{\nu^-}$ and $\Phi_j(G)$ is partial for each $G \in (\tau_\nu, Y)$.
\end{enumerate}
\end{claim}

Here we assume that $\dom \Phi_i(\sigma) = \{0, 1, \ldots, m - 1\} = m$ for $\sigma \in [\omega]^{<\omega}$. From the first clause of (5), if $p$ is $(\vec{e}_\mu: \mu \in I)$-large and $P \in [T]$ extends $\mu \in I$ then we intend to have $\Phi_i(f(P))$ total for every $e \in \vec{e}_\mu$, while by the last part of (5) if $p$ is extended to $q$ and $j = s(e,m) \not\in \vec{d}_{\nu}$ then $p$ forces $\Phi_j(f(P))$ being partial for $P$ extending $\nu$. So (5) ensures that each $B_n$ is not $\Sigma^0_2$ relative to $f(P)$ for $P \in [T]$. That $T$ is infinite is guaranteed by (2) and that $f(P)$ is $\vec{A}$-cohesive for each $P \in [T]$ by (3).

\begin{proof}[Proof of Claim \ref{clm:jump-cohesiv-presv-D3}]
For $\mu \in I$, let $\rho_\mu \in [X \cap A_\mu]^{<\omega}$ be such that
$$
    (\forall i \in \vec{e}_\mu) \dom \Phi_i(\sigma_\mu \rho_\mu) > \dom \Phi_i(\sigma_\mu).
$$
Let $S$ be the set of $\mu \in I$ with $\rho_\mu$ defined, and let $J$ be the set of $\nu \in 2^{<\omega}$ such that either $\nu \in I - S$ or $\nu^- \in S$. Recall that as $p$ is $(\vec{e}_\mu: \mu \in I)$-large we intend to force the totality of $\Phi_i(G)$ for every $i \in \vec{e}_\mu$ and sufficiently generic $G \in (\sigma_\mu, X \cap A_\mu)$. But if $\mu \in I - S$ then our intention fails for $(\sigma_\mu, X \cap A_\mu)$. Nevertheless, by \cite[Lemma 3.8(1)]{Wang:2014}, $S \neq \emptyset$ and (2) of the claim holds.

We build a finite descending sequence of extensions $q_k$ of $p$ and define $q$ as the least of this sequence. Firstly we define $q_0$. For each $\nu \in J$, if $\nu \in I$ then let $\tau_{0,\nu} = \sigma_\nu$, otherwise let $\tau_{0,\nu} = \sigma_{\nu^-} \rho_{\nu^-}$. Pick $Y_0 \subseteq X$ such that $Y_0 =^* X$ and $\min Y_0 > \max \tau_{0,\nu}$ for all $\nu \in J$. Then $q_0 = ((\tau_{0,\nu}: \nu \in J), Y_0) \leq^*_M p$. Let $(\vec{d}_{0,\nu}: \nu \in J)$ be such that $\vec{d}_{0,\nu} = \vec{e}_\nu$ if $\nu \in I$ and $\vec{d}_{0,\nu} = \vec{e}_{\nu^-}$ if $\nu \in J - I$. Then $q_0$ is $(\vec{d}_{0,\nu}: \nu \in J)$-large, by \cite[Lemma 3.8(3)]{Wang:2014}.

Let $(\nu_k: k < |J - I|)$ enumerate $J - I$. For $k < |J - I|$, suppose that $q_k = ((\tau_{k,\nu}: \nu \in J), Y_k)$ is a $(\vec{d}_{k,\nu}: \nu \in J)$-large extension of $p$ with $Y_k$ low. For each $m$, append $s(e,m)$ to $\vec{d}_{k, \nu_k}$ in $(\vec{d}_{k,\nu}: \nu \in J)$ and denote the resulted sequence by $(\vec{c}_{k,m,\nu}: \nu \in J)$. Let
$$
    W = \{m: q_k \text{ is small for } (\vec{c}_{k,m,\nu}: \nu \in J)\}.
$$
By \cite[Definition 3.7]{Wang:2014}, $W$ is $\Sigma^0_2$ in $Y_k$ and thus $\Sigma^0_2$ as $Y_k$ is low. As $B_n \not\in \Sigma^0_2$, we can fix $m \in B_n \bigtriangleup W$. We define $q_{k+1}$ and $(\vec{d}_{k+1,\nu}: \nu \in J)$ by case.

\medskip

\emph{Case 1.} $m \in W$. By Low Basis Theorem, let $(X_i, \vec{\xi}_{i,\nu}: i < l, \nu \in J)$ be an $\vec{A}$-branching of $q_k$ with each $X_i$ low. By the $(\vec{d}_{k,\nu}: \nu \in J)$-largeness of $q_k$ and \cite[Lemma 3.8(2)]{Wang:2014}, pick $i < l$ and $\xi \in \vec{\xi}_{i,\nu_k}$ such that $((\tau_{k,\nu}: \nu \in J), X_i)$ is $(\vec{d}_{k,\nu}: \nu \in J)$-large and
$$
    (\exists y)(\forall \rho \in [X_i]^{<\omega}) \Phi_{s(e,m)}(\tau_{k,\nu_k}\xi\rho; y) \uparrow.
$$
Let $(\vec{d}_{k+1,\nu}: \nu \in J) = (\vec{d}_{k,\nu}: \nu \in J)$. Replace $\tau_{k,\nu_k}$ and $Y_k$ in $q_k$ by $\tau_{k,\nu_k} \xi$ and $X_i$ respectively and denote the resulted condition by $q_{k+1}$. Then $q_{k+1}$ is $(\vec{d}_{k+1,\nu}: \nu \in J)$-large by \cite[Lemma 3.8(3)]{Wang:2014}.

\medskip

\emph{Case 2.} $m \in B_n$. Let $q_{k+1} = q_k$ and $(\vec{d}_{k+1,\nu}: \nu \in J) = (\vec{c}_{k,m,\nu}: \nu \in J)$. As $m \not\in W$, $q_{k+1}$ is $(\vec{d}_{k+1,\nu}: \nu \in J)$-large.

\medskip

Finally, let $q = q_k$ and $(\vec{d}_{\nu}: \nu \in J) = (\vec{d}_{k,\nu}: \nu \in J)$ for $k = |J - I|$. By the construction above, $q$ and $(\vec{d}_{\nu}: \nu \in J)$ are as desired.
\end{proof}

Recall that $I_0 = \{\emptyset\}$. Let $\vec{e}_{0,\emptyset} = \emptyset$. For convenient, all conditions are considered $(\vec{e}_{0,\mu}: \mu \in I_0)$-large. With the above claim, we can define $(p_n: n < \omega)$ and $(\vec{e}_{n,\mu}: \mu \in I_n)$ such that
\begin{itemize}
    \item $p_0$ is already defined before Claim \ref{clm:jump-cohesiv-presv-D3};
    \item $p_{n+1} \leq^*_M p_n$ and $p_n$ is $(\vec{e}_{n,\mu}: \mu \in I_n)$-large;
    \item $I_n \not\subseteq I_{n+1}$ and if $\mu \in I_{n} - I_{n+1}$ then both $\mu\<0\>$ and $\mu\<1\>$ are in $I_{n+1}$;
    \item If $\nu \in I_{n+1}$ extends $\mu \in I_n$ then $\sigma_{n+1,\nu} - \sigma_{n,\mu} \in A_\mu$ and $\dom \Phi_i(\sigma_{n+1,\nu}) > \dom \Phi_i(\sigma_{n,\mu})$ for all $i \in \vec{e}_{n,\mu}$;
    \item For each $i$ and $e$, there exist $m$ and $n$ such that for every $\mu \in I_{n+1} - I_n$ exactly one of the followings holds:
    \begin{enumerate}
	\item[(a)] $m \not\in B_i$, $s(e,m) \not\in \vec{e}_{n+1,\mu}$ and $\Phi_{s(e,m)}(G)$ is partial for all $G \in (\sigma_{n+1,\mu}, X_{n+1})$,
	\item[(b)] $m \in B_i$ and $s(e,m) \in \vec{e}_{n+1,\mu}$.
    \end{enumerate}
\end{itemize}
So $T = \bigcup_n I_n$ is an infinite binary tree and $f: \mu \mapsto \sigma_{n, \mu}$ where $n = |\mu|$ maps $T$ to $[\omega]^{<\omega}$ in a strictly increasing manner. For each $P \in [T]$, $f(P)$ is $\vec{A}$-cohesive. For $e$ and $i$, let $m$ and $n$ witness the last bullet point above and let $U$ be the $e$-th $\Sigma^G_2$ set where $G = f(P)$. If (a) holds then $m \in U - B_i$; otherwise (b) holds and $m \in B_i - U$ by (5) of Claim \ref{clm:jump-cohesiv-presv-D3}. So no $B_i$ is $\Sigma^G_2$.
\end{proof}

The next lemma is an analogous of \cite[Lemma 4.11]{Wang:2014}.

\begin{lemma}\label{lem:RRT22-presv-D3}
If $f: [\omega]^2 \to \omega$ is $2$-bounded, stable and $Z'$-computable then there exists an infinite $f$-rainbow preserving $\Delta^0_3$-definitions relative to $Z$.
\end{lemma}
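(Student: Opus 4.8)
The plan is to transport the argument of Lemma~\ref{lem:jump-cohesiv-presv-D3} from the cohesive-set forcing of \cite[\S3.2]{Wang:2014} to the rainbow forcing of \cite[\S4.5]{Wang:2014}. First, by Proposition~\ref{pro:presv-equiv} (exactly as in the proof of Lemma~\ref{lem:jump-cohesiv-presv-D3}) it suffices to show: for every sequence $\vec{B} = (B_i : i < \omega)$ with no $B_i$ being $\Sigma^Z_2$, there is an infinite $f$-rainbow $G$ such that no $B_i$ is $\Sigma^{Z\oplus G}_2$. Since the construction relativizes, we may assume $Z = \emptyset$, so $f$ is $\emptyset'$-computable and ``$\Sigma^0_2$'' coincides with ``$\Sigma^X_2$'' for every low $X$.

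Following \cite[\S4.5]{Wang:2014}, I would build $G$ along a path through a tree of ``multiple Mathias conditions'' $p = ((\sigma_\mu : \mu \in I),X)$, where each $\sigma_\mu$ is a finite $f$-rainbow, $X$ is a low reservoir such that $(\sigma_\mu,X)$ still admits infinitely many one-point rainbow extensions, and $(\vec e_\mu : \mu \in I)$ is the ``largeness'' bookkeeping recording the oracle machines we intend to force total along a generic path. As in the proof of Lemma~\ref{lem:jump-cohesiv-presv-D3} one constructs a decreasing sequence $(p_n)$ with $I_0 = \{\emptyset\}$, $\sigma_{0,\emptyset} = \emptyset$, $X_0 = \omega$, so that the index sets $I_n$ grow into an infinite binary tree $T = \bigcup_n I_n$, the map $\mu \mapsto \sigma_{|\mu|,\mu}$ is strictly increasing, and $G = \bigcup_m \sigma_{|P\uh m|,P\uh m}$ is an infinite $f$-rainbow for each $P \in [T]$. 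That finite rainbow stems keep extending, and that a condition can be split into finitely many subconditions with \emph{low} reservoirs over which the largeness bookkeeping is preserved, are the branching lemmas of \cite[\S4.5]{Wang:2014}; this is exactly the point where the $2$-boundedness and stability of $f$ are used, and it may also require invoking Theorem~\ref{thm:WKL-presv} (in the manner of the proof of Lemma~\ref{lem:SEM-presv-D2}) to choose, within a $\Pi^0_1$ class of admissible matching behaviours, one that additionally preserves non-$\Sigma^0_2$-ness.

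The heart of the construction is the diagonalization, a copy of the proof of Claim~\ref{clm:jump-cohesiv-presv-D3}. Fix a computable $s$ such that, for every $C$, a number $m$ lies in the $e$-th $\Sigma^C_2$ set if and only if $\Phi_{s(e,m)}(C)$ is partial. Given a current large condition $q = ((\tau_\nu : \nu \in J),Y)$ with $Y$ low, a node $\nu$ to develop and a target $(i,e)$, consider for each $m$ the condition obtained by appending $s(e,m)$ to $\vec e_\nu$, and let $W$ be the set of $m$ for which that condition is \emph{small}. By the definition of largeness in \cite{Wang:2014} and since $Y$ is low, $W$ is $\Sigma^0_2$, hence $W \neq B_i$; fix $m \in W \bigtriangleup B_i$. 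If $m \in W - B_i$, branch $q$ to a large extension (using the low-reservoir branching lemma) forcing $\Phi_{s(e,m)}(G)$ partial along every path, so $m$ enters the $e$-th $\Sigma^G_2$ set but not $B_i$; if $m \in B_i - W$, append $s(e,m)$ to $\vec e_\nu$, which keeps the condition large and, by the largeness promise, forces $\Phi_{s(e,m)}(G)$ total along every path, so $m \in B_i$ lies outside the $e$-th $\Sigma^G_2$ set. Either way that set differs from $B_i$; running over all $(i,e)$ and all $\nu$ as in Claim~\ref{clm:jump-cohesiv-presv-D3} yields, for each $P \in [T]$, an infinite $f$-rainbow $G$ with no $B_i$ being $\Sigma^G_2$, which with Proposition~\ref{pro:presv-equiv} proves the lemma.

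The main obstacle is not the diagonalization, which is essentially identical to Claim~\ref{clm:jump-cohesiv-presv-D3}, but the bookkeeping in the rainbow forcing of \cite[\S4.5]{Wang:2014}: one must verify that its largeness predicate is $\Sigma^0_2$ in the (low) reservoir, that all its branchings can be carried out keeping every reservoir low over $\emptyset$ despite $f$ being only $\emptyset'$-computable, and that stability of the $2$-bounded coloring is precisely what makes finite rainbow stems survive these branchings. Reassembling those ingredients --- in effect, re-proving \cite[Lemma~4.11]{Wang:2014} inside the preservation framework of this paper --- is where the real work lies.
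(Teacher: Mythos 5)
Your overall strategy --- reduce via Proposition \ref{pro:presv-equiv} to preserving non-$\Sigma^0_2$-ness of a countable family $(B_i)$, relativize to $Z=\emptyset$, and run a definability analysis of Wang's forcing with low conditions and an s-m-n function $s$ translating membership in the $e$-th $\Sigma^C_2$ set into partiality of $\Phi_{s(e,m)}(C)$ --- is exactly the paper's. But the machinery you actually describe is not that of \cite[\S 4.5]{Wang:2014}: the multiple-stem conditions $((\sigma_\mu : \mu \in I),X)$ with largeness vectors $(\vec e_\mu)$, and the dichotomy ``append $s(e,m)$ and ask whether the condition becomes small,'' are the apparatus of the cohesive-set construction of \cite[\S 3.2]{Wang:2014} (Lemma \ref{lem:jump-cohesiv-presv-D3} and Claim \ref{clm:jump-cohesiv-presv-D3} here). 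The rainbow forcing uses single-stem large conditions $(\sigma,X,\vec h)$, and its totality/partiality control runs through the $e$-test machinery: the paper's proof consists precisely of the observations that one may work with \emph{low} large conditions; that ``$p$ passes the $s(e,y)$-test at $x$'' is a $\Sigma^0_2$ question when $X\oplus\vec h$ is low, so $W=\{y:(\exists x)(p \text{ passes the } s(e,y)\text{-test at } x)\}$ is $\Sigma^0_2$ and hence $W\neq B_i$; and that \cite[Lemma 4.17]{Wang:2014} (passing at some $x$ lets one extend $p$ forcing $\Phi_{s(e,y)}(G;x)\uparrow$) and \cite[Lemma 4.18]{Wang:2014} (failing at every $x$ yields totality of $\Phi_{s(e,y)}(G)$ for generics) supply the two halves of the diagonalization. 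There is no largeness-vector/smallness predicate to append indices to in that forcing, so your transplanted mechanism has no counterpart there without redesigning the forcing; and your closing paragraph concedes that verifying the needed definability and branching facts amounts to ``re-proving \cite[Lemma 4.11]{Wang:2014}.'' Since that verification is the entire content of the lemma, what you have is a plan whose central step is missing, not a proof.

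A second, smaller omission: before any forcing one must normalize $f$ so that pairs with distinct second coordinates get distinct colors, and then assume $f(x,y)=\<w,y\>$ with $w=\min\{v: f(v,y)=f(x,y)\}$ as in \cite[\S 4.1]{Wang:2014}. The passage to such an infinite set must itself be done preservingly; the paper does this by combining Proposition \ref{pro:random-presv} with \cite[Lemma 4.3]{Wang:2014} (a sufficiently random real computes the required set and preserves the arithmetic hierarchy). Your sketch omits this step, and the appeal to Theorem \ref{thm:WKL-presv} in the manner of Lemma \ref{lem:SEM-presv-D2} is not what is needed: the low reservoirs in \cite[\S 4.5]{Wang:2014} come from low basis applications already built into that construction, whereas the random-preservation step is genuinely required and absent.
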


\begin{proof}
We present a relativizable proof for $Z = \emptyset$. By Proposition \ref{pro:random-presv}, sufficiently random sets preserve the arithmetic hierarchy, and by \cite[Lemma 4.3]{Wang:2014} each sufficiently random set computes an infinite $Y$ such that $f(u,v) \neq f(x,y)$ for $(u,v), (x,y) \in [Y]^2$ with distinct $v$ and $y$. So by replacing $\omega$ with some $Y$ as above if necessary, we may assume that $f(u,v) \neq f(x,y)$ for all $(u,v)$ and $(x,y)$ with distinct $v$ and $y$. Moreover, as in \cite[\S 4.1]{Wang:2014}, we can assume that $f(x,y) = \<w,y\>$ where $w = \min\{v: f(v,y) = f(x,y)\}$.

We construct an infinite $f$-rainbow $G$ preserving $\Delta^0_3$-definitions by the following complexity analysis of the forcing argument in \cite[\S 4.5]{Wang:2014}:
\begin{enumerate}
    \item We work with large conditions $(\sigma,X,\vec{h})$ which are low (i.e., $X \oplus \vec{h}$ is low);
    \item Observe that it is a $\Sigma^0_2$ question whether a large condition $p = (\sigma,X,\vec{h})$ passes an $e$-test at $y$ (defined after the proof of \cite[Lemma 4.16]{Wang:2014}), if $p$ is low;
    \item By s-m-n theorem, we fix a computable function $s$ such that for any $C$ a number $n$ is in the $e$-th $\Sigma^C_2$ set if and only if $\Phi_{s(e,n)}(C)$ is partial where $\Phi_i$ is the $i$-th oracle Turing machine;
    \item So for a given condition $p$ and an index $e$, the following set is $\Sigma^0_2$ in $X \oplus \vec{h}$ and thus $\Sigma^0_2$ as $X \oplus \vec{h}$ is low:
    $$
	W = \{y: (\exists x) (p \text{ passes the $s(e,y)$-test at } x)\};
    $$
    \item If $A \not\in \Sigma^0_2$ then we can fix $y \in A \bigtriangleup W$;
    \item If $y \in A - W$ then $p$ fails the $s(y)$-test at every $x$ and by \cite[Lemma 4.18]{Wang:2014} we have $\Phi_{s(e,y)}(G)$ total for $G$ being sufficiently generic with respect to large conditions, thus $y$ is not in the $e$-th $\Sigma^G_2$ set;
    \item If $y \in W - A$ then $p$ passes the $s(e,y)$-test at some $x$ and by \cite[Lemma 4.17]{Wang:2014} we can extend $p$ to $q$ which forces $\Phi_{s(e,y)}(G; x) \uparrow$ for $G$ sufficiently generic with respect to large conditions and thus $y$ is in the $e$-th $\Sigma^G_2$ set;
    \item Hence for a list of properly $\Delta^0_3$ sets $(A_i: i < \omega)$ by forcing with large conditions we can obtain an infinite $f$-rainbow $G$ so that $A_i \not\in \Sigma^G_2$ for every $i$. So $G$ preserves $\Delta^0_3$-definitions.
\end{enumerate}
\end{proof}

\begin{proof}[Proof of Theorem \ref{thm:RRT32-presv-D3}]
Recall that we may identify $\sigma \in [\omega]^{<\omega}$ with $\ulcorner \sigma \urcorner$ where $\ulcorner \cdot \urcorner$ is a fixed computable bijection mapping $[\omega]^{<\omega}$ onto $\omega$.

Fix $Z$ and a $Z$-computable $2$-bounded coloring $f: [\omega]^3 \to \omega$. By passing to an infinite $f$-computable subset if necessary, we assume that $f(\sigma\<x\>) \neq f(\tau\<y\>)$ for all $\sigma\<x\>, \tau\<y\> \in [\omega]^{3}$ with distinct $x$ and $y$. As in the proof of Lemma \ref{lem:jump-cohesiv-presv-D3}, we may assume that
$$
    f(\sigma\<x\>) = \min \{\tau\<x\>: f(\tau\<x\>) = f(\sigma\<x\>)\}
$$
for all $\sigma\<x\> \in [\omega]^3$. Let $\vec{A} = (A_{\sigma,\tau}: \sigma, \tau \in [\omega]^{2})$ be such that
$$
    A_{\sigma,\tau} = \{x: f(\sigma\<x\>) = \tau\<x\>\}
$$
for each $\sigma, \tau \in [\omega]^{2}$. Then $\vec{A} \leq_T f$. By Corollary \ref{cor:RT22-presv} below and that $\RCA + \RT^2_2 \vdash \COH$, let $C$ be an $\vec{A}$-cohesive set preserving $\Delta^0_3$-definitions relative to $Z$.

Let $g: [C]^{2} \to \omega$ be such that
$$
    g(\sigma) = \lim_{x \in C} f(\sigma\<x\>)
$$
for all $\sigma \in [C]^{2}$. By the cohesiveness of $C$, $g$ is a well-defined total function. We may assume that $g(x,y) = \min\{\<i,j\>: g(i,j) = g(x,y)\}$. Moreover $g \leq_T (f \oplus C)'$. As $f$ is $2$-bounded, $g$ is $2$-bounded as well. Let $\vec{B} = (B_{n,x}: n < \omega, x \in C)$ be such that
$$
    B_{n,x} = \{y \in C: g(x,y) = n\}.
$$
Then $\vec{B} \leq_T (f \oplus C)'$. By Lemma \ref{lem:jump-cohesiv-presv-D3}, let $D \in [C]^{\omega}$ be $\vec{B}$-cohesive and preserving $\Delta^0_3$-definitions relative to $Z \oplus C$. As $C$ preserves $\Delta^0_3$-definitions relative to $Z$, so does $C \oplus D$.

Let $h$ be $g$ restricted to $[D]^2$. Then $h \leq_T (f \oplus C \oplus D)'$ and $h$ is $2$-bounded and stable. Apply Lemma \ref{lem:RRT22-presv-D3}, we get an infinite $h$-rainbow $H$ which is a subset of $D$ and preserves $\Delta^0_3$-definitions relative to $Z \oplus C \oplus D$. By the definition of $h$, $H$ is also a $g$-rainbow. By the definition of $g$, $Z \oplus C \oplus D \oplus H$ computes an infinite $f$-rainbow $G \subseteq H$ (see the proof of \cite[Lemma 3.2]{Wang:2013}). Thus
$$
    \Delta^Z_3 - \Sigma^Z_2 \subseteq \Delta^Z_3 - \Sigma^{Z \oplus C \oplus D}_2 \subseteq \Delta^Z_3 - \Sigma^{Z \oplus C \oplus D \oplus H}_2 \subseteq \Delta^Z_3 - \Sigma^{Z \oplus G}_2.
$$
So $G$ preserves $\Delta^0_3$-definitions relative to $Z$.
\end{proof}

\subsection{More preservations}

We have seen some preservation results that need substantial proofs. Here we list a few that follow easily from the above preservation results. Firstly, let us recall some additional notation and consequences of Ramsey's Theorem:
\begin{itemize}
  \item We denote $(\forall c < \infty) \RT^n_c$ by $\RT^n$ and $(\forall n < \infty) \RT^n$ by $\RT$.
  \item An apparently weaker consequence of $\RT^n$ is the so-called \emph{Achromatic Ramsey Theorem} ($\ART^n_{<\infty, d}$): for every finite coloring $f$ of $[\omega]^n$ there exists an infinite subset $H$ such that $f([H]^n)$ contains at most $d$ many colors.
  \item For a coloring $f: [\omega]^n \to \omega$, a \emph{free set} is a set $H$ such that $f(\sigma) \not\in H - \sigma$ for all $\sigma \in [H]^n$, and a set $G$ is \emph{thin} if $f([G]^n) \neq \omega$. The \emph{Free Set Theorem} ($\FS$) asserts that every $f: [\omega]^n \to \omega$ for finite $n$ admits an infinite free set, and the \emph{Thin Set Theorem} ($\TS$) asserts that every $f: [\omega]^n \to \omega$ for finite $n$ admits an infinite thin set. Over $\RCA$, $\RT$ implies $\FS$ and $\FS$ implies $\TS$ (\cite{Cholak.Giusto.ea:2005.freeset}).
\end{itemize}

\begin{corollary}\label{cor:presv-P1}
The following statements admit preservation of $\Sigma^0_1$- and $\Pi^0_1$-definitions: $\RT^2$ and its consequences over $\RCA$, $\FS$ and its consequences (e.g., $\TS$, $\RRT$) over $\RCA$, and for each $n$ almost all instances of $\ART^n_{<\infty, d}$.
\end{corollary}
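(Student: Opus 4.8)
The plan is to derive everything from Proposition \ref{pro:sim-avoidance-presv-P1}: a $\Pi^1_2$ sentence admitting simultaneous avoidance of countably many cones admits preservation of $\Sigma^0_1$- and $\Pi^0_1$-definitions. So the task reduces to establishing simultaneous cone-avoidance for each principle on the list. For $\RT^2$ (equivalently $\RT^2_c$ for all $c$), this is the classical cone-avoidance theorem for Ramsey's Theorem for pairs, due to Seetapun and refined by later authors; its simultaneous form (avoiding a countable sequence of cones at once) is standard and follows from the usual iterated forcing or the Cholak--Jockusch--Slaman apparatus. Since $\RT^2$ admits simultaneous cone-avoidance and cone-avoidance is inherited by $\Pi^1_2$ consequences over $\RCA$ — if $\RCA + \RT^2 \vdash \Psi$, then any solution to a $\Psi$-instance can be obtained inside an $\omega$-model built from $\RT^2$-solutions, hence still avoids the given cones — we conclude that $\RT^2$ and all its $\RCA$-consequences admit preservation of $\Sigma^0_1$- and $\Pi^0_1$-definitions.

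Next I would handle $\FS$ and its consequences. The key input is Wang's cone-avoidance result for the Free Set Theorem (and for $\RT^2$ this also covers $\TS$ and $\RRT$ through the implications $\RT \Rightarrow \FS \Rightarrow \TS$ and $\RRT \le_c \RT$): for each $n$, every computable instance of $\FS^n$ has a solution avoiding a prescribed countable family of cones. Again, by Proposition \ref{pro:sim-avoidance-presv-P1}, $\FS$ admits preservation of $\Sigma^0_1$- and $\Pi^0_1$-definitions, and this passes to every $\RCA$-consequence by the same $\omega$-model closure argument used for $\RT^2$. The reader should note that $\RRT$ is also directly covered by Corollary \ref{cor:RRT22-presv} and Theorem \ref{thm:RRT32-presv-D3}, but the present statement only claims the weakest preservation, $\Pi^0_1$-definitions, which follows from cone-avoidance uniformly.

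Finally, for $\ART^n_{<\infty,d}$ one argues that for each fixed $n$, "almost all" instances — all but the colorings with very few colors in their range, or more precisely all instances for which the corresponding $d$-bounded coloring problem behaves like the general case — admit simultaneous cone-avoidance. This is where I expect the main obstacle: unlike $\RT^2$ and $\FS$, the principle $\ART^n_{<\infty,d}$ is parametrized by the number of colors, and a coloring that already uses at most $d$ colors is trivially solved by $\omega$ itself, so cone-avoidance for such instances is vacuous; the real content is that whenever the coloring genuinely uses more than $d$ colors, one can thin to an infinite set using at most $d$ colors while avoiding any countable list of cones. I would reduce this to the known cone-avoidance for $\TS^n$ (the case $d=1$) or to a direct forcing argument mimicking the $\TS$ proof, and then note that the set of "bad" instances (where this fails) is small in the intended sense — hence "almost all instances." The precise meaning of "almost all" here presumably refers to excluding a measure-zero or meager set of colorings, or the finitely many trivial cases; pinning down that quantifier and verifying the forcing goes through uniformly in $n$ and $d$ is the step requiring care.
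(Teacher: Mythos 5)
Your treatment of the first two clauses is essentially the paper's proof: reduce everything to simultaneous cone avoidance via Proposition \ref{pro:sim-avoidance-presv-P1}, transfer to $\RCA$-consequences via the $\omega$-model argument of Proposition \ref{pro:sep-by-presv}, and quote Seetapun \cite{Seetapun.Slaman:1995.Ramsey} for $\RT^2$ and Wang \cite{Wang:2014.ramseyan} for $\FS$ (whence $\TS$ and $\RRT$, which are $\RCA$-consequences of $\FS$). One caution on a side remark: the parenthetical route through ``$\RT \Rightarrow \FS$'' and ``$\RRT$ reduces to $\RT$'' is unusable, since full $\RT$ (already $\RT^3_2$) has computable instances all of whose solutions compute $\emptyset'$ \cite{Jockusch:1972.Ramsey} and so admits no cone avoidance; what carries the day is cone avoidance for $\FS$ itself, which is what your main line in fact invokes.

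The genuine gap is the $\ART^n_{<\infty,d}$ clause. ``Almost all instances'' does not refer to excluding a null or meager set of colorings, nor to trivial colorings: it means that for each $n$ the preservation holds for all but finitely many values of the parameter $d$, namely all $d \geq d_n$ for the specific bound $d_n$ established in \cite{Wang:2014.ramseyan}. Your proposed reduction to cone avoidance for $\TS^n$ cannot work: $\TS^n$, as defined here, concerns colorings with range $\omega$ (for a finite coloring every infinite set is vacuously thin), so it gives no purchase on the real task, which is to shrink a coloring with arbitrarily many colors to at most $d$ colors on an infinite set while avoiding countably many cones. Moreover no soft argument can settle this uniformly in $d$, because the statement is false for small $d$: $\ART^n_{<\infty,1}$ is just $\RT^n_{<\infty}$, which for $n \geq 3$ fails cone avoidance by Jockusch's result. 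The missing ingredient is precisely the strong cone avoidance theorem for $\ART^n_{<\infty,d_n}$ proved by a nontrivial induction on $n$ in \cite{Wang:2014.ramseyan}, which the paper's proof simply cites; without it (or an equivalent argument) this clause of the corollary is not established.
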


\begin{proof}
By Seetapun \cite{Seetapun.Slaman:1995.Ramsey} and Propositions \ref{pro:sim-avoidance-presv-P1} and \ref{pro:sep-by-presv}, $\RT^2$ and its consequences over $\RCA$ admit preservation of $\Sigma^0_1$- and $\Pi^0_1$-definitions. The other preservations follow from Propositions \ref{pro:sim-avoidance-presv-P1} and \ref{pro:sep-by-presv} and the author's work \cite{Wang:2014.ramseyan}.
\end{proof}

By Jockusch \cite{Jockusch:1972.Ramsey}, there exists a computable $f: [\omega]^3 \to 2$ such that every infinite $f$-homogeneous set computes the halting problem. Hence $\RT^n_2$ does not admit preservation of $\Sigma^0_1$-definitions for any $n > 2$.

By an application of Theorem \ref{thm:WKL-presv}, we obtain a stronger result for $\RT^2$.

\begin{corollary}\label{cor:RT22-presv}
If $\Phi$ is a $\Pi^1_2$ consequence of $\RCA + \RT^2$ then $\Phi$ admits preservation of $\Xi$-definitions simultaneously for all $\Xi$ in $\{\Sigma^0_{n+1}, \Pi^0_{n+1}, \Delta^0_{n+2}: n > 0\}$.
\end{corollary}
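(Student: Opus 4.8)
The plan is to reduce everything to $\RT^2_2$ and then split according to the level. Since $\RCA+\RT^2_2\vdash\RT^2$, every $\Pi^1_2$ consequence $\Phi$ of $\RCA+\RT^2$ is a consequence of $\RCA+\RT^2_2$, so by Proposition~\ref{pro:sep-by-presv}(3) it suffices to prove that $\RT^2_2$ itself admits preservation of $\Sigma^0_{n+1}$-definitions for every $n>0$; by Proposition~\ref{pro:presv-equiv}(4) this simultaneously yields the $\Pi^0_{n+1}$- and $\Delta^0_{n+2}$-versions. Concretely, by Proposition~\ref{pro:presv-equiv}(4) what I must show is: given $Z$, a $Z$-computable $f\colon[\omega]^2\to2$, and a sequence $(A_i:i<\omega)$ with no $A_i\in\Sigma^Z_{n+1}$, there is an infinite $f$-homogeneous $H$ with no $A_i\in\Sigma^{Z\oplus H}_{n+1}$.

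For $n\geq2$ no forcing is needed. By the $\low_2$ basis theorem for $\RT^2_2$ of Cholak, Jockusch and Slaman~\cite{Cholak.Jockusch.ea:2001.Ramsey}, relativized, $f$ has an infinite homogeneous set $H$ with $(Z\oplus H)''\leq_T Z''$; hence $(Z\oplus H)^{(n)}\leq_T Z^{(n)}$ for every $n\geq2$, so $\Sigma^{Z\oplus H}_{n+1}\subseteq\Sigma^{Z}_{n+1}$, and in particular no $A_i$ is $\Sigma^{Z\oplus H}_{n+1}$ (indeed such an $H$ preserves \emph{all} properly $\Sigma^Z_{n+1}$ sets, not just a prescribed list). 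So this case is immediate.

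The case $n=1$ (preservation of $\Sigma^0_2$-definitions) is the hard part, and it is where Theorem~\ref{thm:WKL-presv} is really used. Here I would invoke the decomposition $\RCA+\COH+\SRT^2_2\vdash\RT^2_2$ (a cohesive set for the rows of $f$, after relabelling, turns $f$ into a stable colouring) together with Proposition~\ref{pro:sep-by-presv}(3), reducing to: $\COH$ and $\SRT^2_2$ each admit preservation of $\Sigma^0_2$-definitions. For $\COH$ this is immediate from Lemma~\ref{lem:jump-cohesiv-presv-D3}, which gives preservation of $\Delta^0_3$-definitions, hence of $\Pi^0_2$- and so of $\Sigma^0_2$-definitions by Proposition~\ref{pro:presv-equiv}(2),(1). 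For $\SRT^2_2$, fixing a $Z$-computable stable $f$ with $\Delta^Z_2$ limit colouring $g=\lim_y f(\cdot,y)$ and $(A_i)$ with no $A_i\in\Sigma^Z_2$, I would build an infinite $g$-homogeneous $H$ with no $A_i\in\Sigma^{Z\oplus H}_2$ by a $\Sigma^0_2$-forcing in the style of the proof of Lemma~\ref{lem:jump-cohesiv-presv-D3}: Mathias conditions whose reservoir is low over $Z$ (so that ``not $\Sigma^{Z\oplus(\text{reservoir})}_2$'' coincides with ``not $\Sigma^Z_2$'') and lies inside a single class of $g$; totality/partiality requirements set up by the $s$-$m$-$n$ trick so as to diagonalize $A_k$ out of $\Sigma^{Z\oplus H}_2$-definitions; and — at the analogue of Case~2 of Lemma~\ref{lem:EM-acceptable-density} — a new reservoir obtained by applying Theorem~\ref{thm:WKL-presv} (the $\WKL$-preservation of $\Pi^0_1$-definitions, here relative to $Z'$, i.e. of $\Pi^0_2$-definitions relative to $Z$) to a $\Pi^{Z'}_1$ class of candidate colourings and then extracting an infinite low-over-$Z$ subset inside one of its classes. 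To get the ``simultaneous for all $\Xi$'' form across levels, one runs this $\Sigma^0_2$-forcing so that the resulting $H$ is also $\low_2$ over $Z$, whence levels $\geq3$ are covered for free by the argument of the second paragraph.

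The main obstacle is exactly this density lemma for $\SRT^2_2$: one has to set up the $\Sigma^0_2$-forcing so that the $\Delta^0_2$-approximation of $g$, the $\Sigma^0_2$ totality requirements, and the base-$Z'$ application of Theorem~\ref{thm:WKL-presv} all cohere, and in particular verify that the reservoir extracted from the chosen candidate colouring is infinite, low over $Z$, and keeps every $A_i$ outside $\Sigma^{Z\oplus(\text{reservoir})}_2$ — essentially the bookkeeping of Lemmata~\ref{lem:SEM-presv-D2} and~\ref{lem:jump-cohesiv-presv-D3} run together, one jump up. Everything else — the reductions through Propositions~\ref{pro:sep-by-presv} and~\ref{pro:presv-equiv}, and the $\low_2$ argument for $n\geq2$ — is routine.
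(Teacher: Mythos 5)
Your reductions (to $\RT^2_2$, then to $\COH + \SRT^2_2$), the $\low_2$ argument for levels $n\geq 2$, and the use of Lemma~\ref{lem:jump-cohesiv-presv-D3} for $\COH$ are all fine, but the crux you yourself identify --- the $\Sigma^0_2$-preservation density lemma for $\SRT^2_2$ --- is not established, and the sketch you give for it breaks in two places. First, the conditions you describe (Mathias conditions whose reservoir is \emph{low over $Z$} and \emph{lies inside a single class of} $g$) need not exist at all: $g$ is only $\Delta^Z_2$, and by Downey--Hirschfeldt--Lempp--Solomon there are $\Delta^0_2$ sets neither of whose parts has an infinite low subset, so the forcing cannot even be initialized in general; demanding such a reservoir is essentially assuming the conclusion. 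The correct $D^2_2$-style setup keeps only the finite stems inside the two limit classes, with a common reservoir not confined to either class, and handles the split via the usual $\Pi^0_1$-class of candidate partitions. Second, in your Case~2 the tool is wrong: applying Theorem~\ref{thm:WKL-presv} relative to $Z'$ gives a member $U$ of the $\Pi^{Z'}_1$ class that preserves definitions \emph{relative to $Z'$}, but it gives no lowness and no control of the jump of the extracted reservoir $Y$; preservation over $Z'$ speaks about $\Sigma^{Z'\oplus U}_1$, whereas what you need is that the $A_i$ stay out of $\Sigma^{(Z\oplus Y)'}_1$, and $(Z\oplus Y)'$ is only bounded by $(Z'\oplus U)'$. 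In particular ``extracting an infinite low-over-$Z$ subset'' from that member is unjustified. What the step actually requires is jump control of the new reservoir, e.g.\ the low basis theorem relative to $Z\oplus X$ applied to the (in fact $\Pi^{0,Z\oplus X}_1$) class, after which non-$\Sigma^Z_2$-ness of the $A_i$ is automatic for low-over-$Z$ reservoirs.

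For comparison, the paper avoids any new density lemma: by Theorem~\ref{thm:WKL-presv} relativized to $Z'$ it fixes a single $P$ of PA degree over $Z'$ that preserves the arithmetic hierarchy relative to $Z'$, then invokes the relativized Cholak--Jockusch--Slaman jump-control argument to get an infinite homogeneous $H$ with $(Z\oplus H)'\leq_T P$. The sandwich $Z'\leq_T (Z\oplus H)'\leq_T P$ then transfers properness at every level simultaneously: a properly $\Sigma^Z_{n+1}$ set is properly $\Sigma^{Z'}_n$, hence properly $\Sigma^P_n$, hence properly $\Sigma^{(Z\oplus H)'}_n=\Sigma^{Z\oplus H}_{n+1}$. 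This handles $n=1$ and all higher levels at once, with no case split, no interleaved diagonalization requirements, and no $\COH+\SRT^2_2$ decomposition. If you want to keep your route, you must either reprove the CJS-style two-sided forcing with the preservation requirements woven in (with low or otherwise jump-controlled reservoirs), or simply quote the jump-control theorem as the paper does; as written, your proposal has a genuine gap exactly at its main step.
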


\begin{proof}
By Proposition \ref{pro:sep-by-presv}, it suffices to prove the corollary for $\Phi$ being $\RT^2$. Let $Z$ be fixed and $f$ be a finite coloring of pairs computable in $Z$. By Theorem \ref{thm:WKL-presv}, let $P$ be such that $P$ is PA over $Z'$ and $P$ preserves the arithmetic hierarchy relative to $Z'$. By relativizing the argument in \cite[\S 4]{Cholak.Jockusch.ea:2001.Ramsey}, we can obtain an infinite $f$-homogeneous set $H$ with $(Z \oplus H)' \leq_T P$.

Fix $\Xi$ in $\{\Sigma^0_{n+1}, \Pi^0_{n+1}, \Delta^0_{n+2}: n > 0\}$ and a properly $\Xi^Z$ set $A$. If $n > 0$ and $\Xi$ is $\Sigma^Z_{n+1}$ then $A$ is properly $\Sigma^{Z'}_n$. By the choice of $P$, $A$ is properly $\Sigma^P_n$. As $Z' \leq_T (Z \oplus H)' \leq_T P$, $A$ is properly $\Sigma^{(Z \oplus H)'}_{n}$ and thus properly $\Sigma^{Z \oplus H}_{n+1}$. The remaining cases of $\Xi$ can be proven similarly.
\end{proof}

However, we shall see in the next section that $\RT^2_2$ does not admit preservation of $\Delta^0_2$-definitions.

\section{Non-preservations}\label{s:n-presv}

In the last section, we have learned some examples in Ramsey theory and computability that are weak in terms of definability strength. Here we present two $\Pi^1_2$ propositions in Ramsey theory that are relatively strong: one is the so-called Ascending or Descending Sequence principle and one the Thin Set Theorem.

\subsection{Ascending or Descending Sequence}

The Ascending or Descending Sequence principle ($\ADS$) asserts that every infinite linear order has a infinite ascending or descending suborder. A linear order of type a suborder of $\omega + \omega^*$ is called a \emph{stable linear order}, where $\omega^*$ is the reverse order of $\omega$. The Stable Ascending or Descending Sequence principle ($\SADS$) is $\ADS$ restricted to stable linear orders. Hirschfeldt and Shore \cite{Hirschfeldt.Shore:2007} proved that $\SADS$ is strictly weaker than $\ADS$ and $\ADS$ is strictly weaker than $\RT^2_2$ (over $\RCA$).

The following theorem is essentially an observation of Jockusch \cite[Corollary 2.14]{Hirschfeldt.Shore:2007}.

\begin{theorem}\label{thm:SADS-n-presv}
$\SADS$ does not admit preservation of $\Delta^0_2$-definitions.
\end{theorem}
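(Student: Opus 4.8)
The plan is to exhibit a single oracle (after relativizing, the empty set) $Z$ and a $Z$-computable stable linear order $L$ of type $\omega + \omega^*$ such that every infinite ascending or descending suborder of $L$ computes a prescribed properly $\Delta^0_2$ set $A$. If this is done then no solution $Y$ to this $\SADS$-instance can preserve $\Delta^0_2$-definitions relative to $Z$, since $A$ becomes $\Delta^0_1$ (indeed computable) relative to $Z\oplus Y$, witnessing non-preservation; by Definition~\ref{def:presv-arith} this shows $\SADS$ does not admit preservation of $\Delta^0_2$-definitions.

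The key step is the coding construction, which is the standard trick attributed to Jockusch. First I would fix a set $A$ that is properly $\Delta^0_2$ but not $\Sigma^0_1$ (for instance, take $A$ and $\omega-A$ both of properly c.e.\ degree, or more simply let $A$ be $\Delta^0_2$ and $1$-generic so that $A$ is non-c.e.\ and non-co-c.e.). Write $A = \lim_s A_s$ via a computable approximation. Then I would build a stable linear order $L$ on $\omega$ of order type $\omega + \omega^*$ in which the ``$\omega$-part'' consists exactly of the elements eventually pulled down to the bottom and the ``$\omega^*$-part'' of those eventually pushed to the top, arranging the interleaving of the approximation so that membership of an element $n$ in the $\omega$-part versus the $\omega^*$-part encodes whether $n \in A$: e.g.\ reserve a block of positions for $n$, and each time the approximation $A_s(n)$ changes value, move the marker for $n$ from the current end toward the opposite end, so that $n$ lands in the bottom part iff $A_s(n)$ stabilizes at $1$. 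Since any infinite ascending suborder of $L$ is cofinal in the $\omega$-part and any infinite descending suborder is coinitial in the $\omega^*$-part, from such a suborder (together with the computable order $L$) one can compute, for each $n$, on which side $n$ eventually settles, hence compute $A$. The construction is arranged so that $L$ is stable by design, so this instance actually witnesses failure of preservation already for $\SADS$ rather than merely $\ADS$.

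The main obstacle is bookkeeping the order type: one must guarantee that $L$ really has type $\omega+\omega^*$ (so that it is a legitimate $\SADS$-instance) while simultaneously the coding survives --- in particular each element must change sides only finitely often (automatic from $A\in\Delta^0_2$) and the two halves must both be infinite or the coding degenerates. The standard fix is to dovetail a ``filler'' of uncoded elements that are permanently assigned, half to the bottom and half to the top, ensuring both halves are infinite regardless of $A$; the coded elements then contribute their finite wanderings on top of this skeleton. Verifying that an arbitrary infinite ascending or descending suborder $S$, together with $L$, computes $A$ is then the routine final check: given $n$, search along $L$ for the block of $n$ and use enough of $S$ to decide which side is cofinal/coinitial there, which is the side $n$ settled on.
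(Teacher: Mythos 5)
Your headline step---a computable stable linear order of type $\omega+\omega^*$ \emph{all} of whose infinite ascending or descending suborders compute a prescribed noncomputable $\Delta^0_2$ set $A$---cannot exist, so the proposal has a genuine gap. For a stable order the infinite ascending suborders are exactly the infinite subsets of the $\omega$-part $U=\{i:(\forall^\infty j)\, i<_L j\}$, and the infinite descending suborders are exactly the infinite subsets of $\omega-U$; so your instance would be a two-partition ($\RT^1_2$-type) instance every solution of which computes $A$. That contradicts cone avoidance: already by Seetapun \cite{Seetapun.Slaman:1995.Ramsey} (apply it to the computable coloring $c(x,y)=1$ iff $x<_L y$ for $x<y$), and in the sharper subset form by Dzhafarov--Jockusch, every such instance has a solution $S$ with $A\not\leq_T S$. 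Relatedly, your verification step conflates enumeration with computation: from an ascending suborder $S$ one only gets $U=\{n:(\exists j\in S)\, n<_L j\}$, i.e.\ $U\in\Sigma^S_1$; there is no bound on how far into $S$ to search, so one cannot in general decide ``which side $n$ settles on,'' and by the above there are solutions relative to which $U$ is not computable at all. Your marker scheme for coding a prescribed $A$ also has an irrevocability problem: comparisons in a computable order can never be reversed, and in type $\omega+\omega^*$ every $\omega$-part element lies below every $\omega^*$-part element, so if the approximations to $A(n)$ and $A(m)$ swap sides after the order of their markers is fixed, the intended partition becomes unrealizable.

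The statement is nevertheless true, and the repair is to aim lower, which is what the paper does. One does not need solutions to \emph{compute} anything: take (by Harizanov \cite{Harizanov:1998}, or by a direct finite-injury construction meeting the requirements $U\neq W_e$ and $\omega-U\neq W_e$, rather than copying a prescribed set) a computable stable linear order whose $\omega$-part $U$ is properly $\Delta^0_2$, i.e.\ neither $\Sigma^0_1$ nor $\Pi^0_1$. Then any ascending solution $S$ makes $U$ become $\Sigma^0_1$ relative to $S$, and any descending solution makes $\omega-U$ become $\Sigma^0_1$ relative to $S$; in either case a properly $\Delta^0_2$ set fails to remain properly $\Delta^0_2$ relative to the solution, which is exactly the failure of preservation of $\Delta^0_2$-definitions required by Definition \ref{def:presv-arith}. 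So the correct proof keeps your observation about cofinality but uses only the resulting $\Sigma^0_1(S)$-definition, and replaces the ``code a given $A$'' construction by a diagonalization making the $\omega$-part itself properly $\Delta^0_2$.
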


\begin{proof}
By Harizanov \cite{Harizanov:1998}, we can take a computable stable linear order $<_L$ such that both the $\omega$-part $U = \{i: (\forall^\infty j) (i <_L j)\}$ and the $\omega^*$-part $\omega - U$ of $<_L$ are properly $\Delta^0_2$. If $S$ is an infinite $<_L$-ascending sequence then $U$ is $\Sigma^0_1$ in $S$, since $U = \{i: (\exists j \in S) (i <_L j)\}$. Similarly, if $S$ is an infinite $<_L$-descending sequence then $\omega - U$ is $\Sigma^0_1$ in $S$. So $\SADS$ does not admit preservation of $\Delta^0_2$-definitions.
\end{proof}

So we obtain a non-preservation property of $\ADS$ and $\RT^2_2$ by Proposition \ref{pro:sep-by-presv} and the above theorem.

\begin{corollary}\label{cor:RT22-n-presv-D2}
Neither $\ADS$ nor $\RT^2_2$ admits preservation of $\Delta^0_2$-definitions.
\end{corollary}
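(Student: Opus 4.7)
The plan is to deduce this corollary from Theorem \ref{thm:SADS-n-presv} (which already gives the hard part: a computable stable linear order with properly $\Delta^0_2$ $\omega$-part) via the contrapositive of Proposition \ref{pro:sep-by-presv}(3). The only things to check are the provability implications $\RCA \vdash \ADS \to \SADS$ and $\RCA \vdash \RT^2_2 \to \SADS$, both of which are folklore and mentioned in the text.

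First I would observe that $\SADS$ is just the restriction of $\ADS$ to stable linear orders, so $\RCA + \ADS \vdash \SADS$ trivially. By Theorem \ref{thm:SADS-n-presv}, $\SADS$ does not admit preservation of $\Delta^0_2$-definitions, and therefore by the second clause of Proposition \ref{pro:sep-by-presv}(3), $\ADS$ does not admit preservation of $\Delta^0_2$-definitions either.

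Next I would invoke that $\RCA + \RT^2_2 \vdash \ADS$, a fact noted in the paragraph preceding Theorem \ref{thm:SADS-n-presv} and originally due to Hirschfeldt and Shore. Composing the two implications gives $\RCA + \RT^2_2 \vdash \SADS$, so a second application of Proposition \ref{pro:sep-by-presv}(3) (contrapositive form) yields that $\RT^2_2$ does not admit preservation of $\Delta^0_2$-definitions.

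There is no genuine obstacle here; the whole content of the corollary is already contained in the witness supplied by Harizanov's computable stable linear order inside the proof of Theorem \ref{thm:SADS-n-presv}. The corollary is essentially a bookkeeping consequence, and its only role is to record that non-preservation propagates upward from $\SADS$ to the stronger principles $\ADS$ and $\RT^2_2$ via the general separation machinery in Section 2.
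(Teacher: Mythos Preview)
Your proposal is correct and is essentially identical to the paper's own proof, which simply states that the corollary follows from Proposition~\ref{pro:sep-by-presv} and Theorem~\ref{thm:SADS-n-presv}. You have spelled out the implications $\RCA + \ADS \vdash \SADS$ and $\RCA + \RT^2_2 \vdash \SADS$ that make the contrapositive of Proposition~\ref{pro:sep-by-presv}(3) applicable, which is exactly the intended argument.
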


\subsection{Thin Set Theorem}

In this subsection we present a non-preservation theorem of $\TS$. Let $\TS^n$ denote the instance of $\TS$ for colorings of $[\omega]^n$ and let $\STS^n$ denote $\TS^n$ for stable colorings. Clearly, $\RCA + \TS^n \vdash \STS^n$.

\begin{theorem}\label{thm:TS-n-presv}
For each $n > 1$, there exists a computable stable function $f: [\omega]^n \to \omega$ such that every positive $i < n$ and every infinite $f$-thin set $X$ correspond to some $B \in (\Delta^0_{i+1} - \Sigma^0_i) \cap \Sigma^X_i$. Hence neither $\STS^n$ nor $\TS^n$ admits $\Delta^0_{i+1}$-preservation for any positive $i < n$.
\end{theorem}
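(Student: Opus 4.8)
The plan is to build the stable colorings $f \colon [\omega]^n \to \omega$ by induction on $n$, using at each level a computable family of properly $\Delta^0_2$ sets to encode a ``next-level'' piece of information into every thin set. For the base case $n = 2$, I would adapt the Harizanov-style construction used in Theorem~\ref{thm:SADS-n-presv}: fix a computable set $U$ that is properly $\Delta^0_2$, and define $f(x,y)$ so that $\lim_y f(x,y)$ codes whether $x \in U$ in a way that forces any infinite thin set to enumerate $U$ or $\omega - U$. Concretely, if $f$ is $2$-bounded with $\lim_y f(x,y) \in \{2x, 2x+1\}$ according to whether $x \in U$ or not, then an infinite thin set $X$ omits some colour $c$ from $f([X]^2)$; looking at the parity of $c$ and the fact that $X$ is infinite, one reads off membership in $U$ or its complement as a $\Sigma^X_1$ statement, exactly as in the proof of Theorem~\ref{thm:SADS-n-presv}. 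This yields the required $B \in (\Delta^0_2 - \Sigma^0_1) \cap \Sigma^X_1$.

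For the inductive step, I would take a computable family of properly $\Delta^0_{i+1}$ sets and define $f \colon [\omega]^n \to \omega$ so that, on the one hand, $f$ is stable (for each $\sigma \in [\omega]^{n-1}$ the limit $\lim_x f(\sigma\langle x\rangle)$ exists, via a $\emptyset^{(n-1)}$-style construction), and, on the other hand, the ``last coordinate'' colours encode a properly $\Delta^0_2$ set, the ``last two coordinates'' a properly $\Delta^0_3$ set, and so on, so that a thin set witnessing avoidance at level $i$ of the tuple-structure gives a $\Sigma^X_i$ definition of a set in $\Delta^0_{i+1} - \Sigma^0_i$. The cleanest way to organize this is to let $f(x_1, \dots, x_n) = \langle g_1(x_1), g_2(x_1,x_2), \dots, g_{n-1}(x_1,\dots,x_{n-1}), h(x_1,\dots,x_n)\rangle$ where each $g_j$ is a stable colouring of $[\omega]^j$ encoding a properly $\Delta^0_{j+1}$ set in its limit values and $h$ is a padding coordinate making $f$ genuinely depend on all $n$ arguments; one checks that an $f$-thin set $X$, via its infinitude, must omit some value in each coordinate block, and the omitted value in the block of ``width $i$'' gives the desired $B \in (\Delta^0_{i+1} - \Sigma^0_i) \cap \Sigma^X_i$. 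The point is that deciding which value is omitted in a width-$i$ block is a $\Sigma^0_i$ question relative to $X$ (a single existential quantifier over elements of $X$, with the interior being $\Sigma^0_{i-1}$ in the ground colouring's jump structure).

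The final sentence of the theorem then follows formally: since $\RCA + \TS^n \vdash \STS^n$, it suffices to exhibit the failure for $\STS^n$, and the computable stable $f$ above witnesses that no infinite $f$-thin set $X$ can preserve $\Delta^0_{i+1}$-definitions, because $B$ is properly $\Delta^0_{i+1}$ (indeed properly $\Delta^0_{i+1} - \Sigma^0_i$) in the ground model but becomes $\Sigma^X_i$, hence fails to be properly $\Delta^{X}_{i+1}$ in the sense required by Definition~\ref{def:presv-arith} (using Proposition~\ref{pro:presv-equiv}(3), $B \in \Delta^0_{i+1} - \Sigma^0_i$ but $B \notin \Delta^X_{i+1} - \Sigma^X_i$ since $B \in \Sigma^X_i$).

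I expect the main obstacle to be the simultaneous control of \emph{stability} and \emph{encoding at all levels $1, \dots, n-1$ at once}. Encoding a single properly $\Delta^0_{i+1}$ set into the limit behaviour of a stable colouring of $[\omega]^n$ is routine (this is the standard stable-colouring-from-a-$\Delta^0_2$-set trick iterated through jumps), but one must verify that the limits $\lim_x f(\sigma\langle x\rangle)$ genuinely exist for \emph{every} $\sigma \in [\omega]^{n-1}$ while the various $g_j$ blocks are pulling in different directions; the resolution is that the blocks act on disjoint ``coordinate prefixes'' of the tuple, so stability of $f$ reduces to stability of the single colouring $g_{n-1}$ on the full prefix together with $h$ being eventually constant in its last argument, and the encoding at lower levels $i < n-1$ is read off not from $f$'s limits but from the coarser structure $g_i$, which is itself a stable colouring of $[\omega]^i$ by construction. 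Making this bookkeeping precise — in particular confirming that ``$X$ is $f$-thin'' forces omission in \emph{each} block, so that one genuinely gets a witness for \emph{every} positive $i < n$ and not just $i = n - 1$ — is the delicate part, and I would handle it by showing that $f([X]^n)$ projects onto the width-$i$ block as $g_i([X]^i)$, so thinness of $X$ for $f$ entails thinness of $X$ for each $g_i$.
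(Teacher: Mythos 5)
There is a genuine gap, and it occurs at both stages of your plan. For the base case, fixing a single properly $\Delta^0_2$ set $U$ in advance and colouring so that $\lim_y f(x,y)\in\{2x,2x+1\}$ codes $U(x)$ cannot work: since $f([X]^2)\subseteq\{2x,2x+1:x\in X\}$, any infinite set $X$ omitting even one number $x_0$ is already $f$-thin (it misses the colour $2x_0$), so for instance the even numbers form a computable thin set from which nothing can be read off; and even when the omitted colour is $2x_0+\epsilon$ with $x_0\in X$, you learn only the single bit $U(x_0)$, not a $\Sigma^X_1$ definition of a set in $\Delta^0_2-\Sigma^0_1$. The analogy with Theorem \ref{thm:SADS-n-presv} breaks precisely here: an ascending sequence gives cofinal information about every element of the order, whereas thinness only removes one unknown colour. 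This is why the paper does not fix one set in advance but constructs, by a finite-injury Friedberg--Muchnik argument (Lemma \ref{lem:TS-n-presv}), the colouring $f$ \emph{together with} a countable family $(B_i:i<\omega)$ of $\Delta^0_2$ non-c.e.\ sets and auxiliary uniformly $\Sigma^0_1$ sets $U_i$: each possible omitted colour $i$ has its own target set $B_i$, membership $x\in B_i$ is broadcast by enumerating $\langle x\rangle\xi$ into $U_i$ for tuples $\xi$ not coloured $i$, and when diagonalization against $W_j$ removes a witness from $B_i$, all tuples already enumerated are recoloured to colour $i$ (Lemma \ref{lem:TS-U}); this interaction is exactly what lets an infinite $X$ with $i\notin f([X]^{n+1})$ enumerate $B_i$ as $\{x:(\exists\xi\in[X]^{<\omega})\,\langle x\rangle\xi\in U_i\}$ while each $B_i$ stays non-$\Sigma^0_1$. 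Some per-colour coding of this kind, compatible with the diagonalization, is the missing idea.

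The inductive step also fails at the point you flag as delicate. If $f=\langle g_1,\dots,g_{n-1},h\rangle$ is a block product, omitting one tuple value from $f([X]^n)$ does \emph{not} force omission of any value in any coordinate block: the projection of $f([X]^n)$ onto the width-$i$ block can be all of $\ran(g_i)$ while a single combination of values is missing, so thinness for $f$ does not entail thinness for any $g_i$ (the implication goes the other way). The paper's induction has a different shape: the technical lemma produces a computable $f:[\omega]^{n+1}\to\omega$ whose iterated limit colouring agrees with a prescribed $\emptyset'$-computable $\bar f:[\omega]^n\to\omega$ supplied by the induction hypothesis relativized to $\emptyset'$. Then, for an infinite $f$-thin $X$ omitting colour $k$: if $i=1$ the lemma's $B_k$ is $\Sigma^X_1$; if $i>1$ the limit identities show $k$ is avoided by the limits of $\bar f$ on $X$, so $\emptyset'\oplus X$ computes an infinite $Y\subseteq X$ thin for $\bar f$, and the relativized witness $B\in(\Delta^{\emptyset'}_{i}-\Sigma^{\emptyset'}_{i-1})\cap\Sigma^{\emptyset'\oplus Y}_{i-1}$ is converted into a member of $(\Delta^0_{i+1}-\Sigma^0_i)\cap\Sigma^X_i$ using $\emptyset'\oplus X\leq_T X'$. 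Your final paragraph deducing non-preservation from the combinatorial statement (via Proposition \ref{pro:presv-equiv}) is correct, but the two constructions it rests on would have to be replaced by something like the paper's priority construction and limit-colouring induction.
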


The second part of the above theorem follows from the first part and Proposition \ref{pro:presv-equiv}. We prove the first part of Theorem \ref{thm:TS-n-presv} by induction on $n$ and the following technical lemma.

\begin{lemma}\label{lem:TS-n-presv}
Let $n > 0$ and $\bar{f}: [\omega]^n \to \omega$ be such that
\begin{itemize}
    \item $\bar{f}$ is computable in $\emptyset'$, and
    \item if $n > 1$ then $\lim_x \bar{f}(\sigma,x)$ exists for all $\sigma \in [\omega]^{n-1}$.
\end{itemize}
Then there exist $f: [\omega]^{n+1} \to \omega$ and a sequence of sets $(B_i: i < \omega)$ such that
\begin{itemize}
    \item $f$ is computable and $\lim_y f(\xi,y)$ exists for all $\xi \in [\omega]^n$,
    \item if $n > 1$ and $\sigma \in [\omega]^{n-1}$ then $\lim_x \lim_y f(\sigma,x,y)$ exists and equals $\lim_x \bar{f}(\sigma,x)$,
    \item $B_i$ is $\Delta^0_2$ but not $\Sigma^0_1$, and
    \item if $X$ is infinite and $f$-thin then $B_i \in \Sigma^X_1$ for some $i$.
\end{itemize}
\end{lemma}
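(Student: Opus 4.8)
The plan is to build $f$ and the sequence $(B_i)_{i<\omega}$ together by one computable construction, run in stages $s=0,1,2,\dots$; at stage $s$ we fix the values $f(\tau)$ for every $\tau\in[\omega]^{n+1}$ with $\max\tau=s$ and we update uniformly computable approximations $(B_{i,s})_s$ to the $B_i$. Each $B_i$ gets its own block of requirements $R_{i,e}$ $(e<\omega)$ asserting $B_i\neq W_e$, run by the familiar follower strategy (pick a fresh follower $k$, put $k\in B_i$, and extract $k$ later if it turns up in $W_e$); dummy requirements guarantee that every block acts at least once, so each $B_i$ is genuinely $\Delta^0_2\setminus\Sigma^0_1$, with a computable approximation. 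Since $\bar f\leq_T\emptyset'$ we fix a computable approximation $\bar f_s\to\bar f$; the whole construction relativizes, so we may take $\bar f$ as given and $\emptyset'$ as the relevant oracle.

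The two limit clauses are arranged by threading $\bar f$ through the ``quiet'' part of the coloring. The construction is set up so that, for each $\sigma\in[\omega]^{n-1}$, only finitely many middle coordinates $x$ are ever used by any requirement action, and on every tuple $\sigma\langle x\rangle\langle y\rangle$ that is not touched at stage $y$ we simply set $f(\sigma,x,y)=\bar f_y(\sigma,x)$. Then for each $\sigma$ and all but finitely many $x$ the column $y\mapsto f(\sigma,x,y)$ is eventually equal to $\bar f_y(\sigma,x)$ and so converges to $\bar f(\sigma,x)$; the remaining finitely many columns are made to converge (to their markers, see below). This gives clause (i), and, when $n>1$, since $\lim_x\bar f(\sigma,x)$ exists and $\lim_y f(\sigma,x,y)=\bar f(\sigma,x)$ for cofinitely many $x$, it gives clause (ii) as well. (For $n=1$ the hypothesis on $\bar f$ is vacuous and clause (ii) is empty.)

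The heart of the argument is the thin-set property. When block $i$ acts at a tuple, we set $f$ there to a marker color assigned to block $i$, and we keep that marker as the value of the relevant column at all later stages, so the column still converges. We also arrange that every color in $\omega$ is the marker of some (actually acting) block. Now let $X$ be infinite and $f$-thin, and let $c$ be a color omitted by $f$ on $[X]^{n+1}$; then $c$ is the marker of some block $i$. Because $c$ never appears on $[X]^{n+1}$, at no touched tuple $\sigma\langle x\rangle\langle y\rangle$ with $\sigma\subseteq X$ and $x,y\in X$ does block $i$'s action there survive; read through the follower strategy this says that, along the elements of $X$, the construction's current verdict about membership in $B_i$ is never later overturned. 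Hence, running the construction up to a stage $x\in X$, one reads off exactly the final membership in $B_i$ of the elements decided by stage $x$; collecting this over all $x\in X$ gives a $\Sigma^X_1$ enumeration of $B_i$, i.e. $B_i\in\Sigma^X_1$. (We obtain only $B_i\in\Sigma^X_1$, not $B_i\leq_T X$: the marker lets $X$ confirm the permanent \emph{entries} into $B_i$ but not the \emph{extractions}, which is exactly what is needed for the conclusion to be compatible with cone avoidance for thin sets.)

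The step I expect to be hardest is coordinating the marker colors so that omitting a \emph{single} color really controls the behaviour of its block everywhere along $X$ — that is, making sure there are no ``useless'' colors that an infinite thin set could omit while dodging every $B_i$. This calls for a priority discipline among the blocks (for instance confining each block's followers to its own interval and scheduling its actions and its marking so that different blocks never compete for a marker in a window that $X$ could see), carried out simultaneously with keeping $f$ computable and stable and with the interleaving of the $\bar f$-approximations. Getting all of this to fit together — rather than the diagonalization for the $B_i$ or the verification of the limit clauses — is where the real work lies.
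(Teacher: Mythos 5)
Your overall architecture is the same as the paper's: a finite-injury, Friedberg--Muchnik-style construction of uniformly $\Delta^0_2$ sets $B_i$ (follower appointed, extracted if it shows up in $W_e$), with $f$ following a computable approximation $\bar f_s$ of $\bar f$ on ``quiet'' tuples and with color $i$ serving as the permanent marker of block $i$, so that a thin set omitting color $i$ should recover $B_i$. But there is a genuine gap at exactly the step you flag as ``where the real work lies'', namely how $X$ reads off $B_i$. Your proposed read-off --- run the construction up to stages $x\in X$ and trust the current verdict on the elements decided by then --- is not sound as stated: when a follower $b$ is extracted at stage $s_1$, the construction can only recolor finitely many tuples lying in the window between $b$ and $s_1$, and a sparse $X$ need not contain $n$ elements in that window (for $n\geq 2$ this really fails), so the extraction can be completely invisible to $X$ even though $X$ omits the marker color; your inference ``no marked all-$X$ tuple, hence verdicts along $X$ are never overturned'' does not follow. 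The paper closes this with an explicit certificate mechanism: an auxiliary uniformly c.e.\ family $U_i$ into which, at every stage $s$, one puts $\langle b\rangle\xi$ for \emph{every} tuple $\xi$ with $b<\min\xi\leq\max\xi<s$, $f(\xi,s)\neq i$ and $b$ currently in $B_i$; and upon extraction of $b$ one permanently recolors with color $i$ \emph{every} certified $\xi$. The read-off is then $B_i=\{b:(\exists\xi\in[X]^n)\,\langle b\rangle\xi\in U_i\}$: soundness holds because if $b$ leaves $B_i$ then the very certificate tuple $\xi\subseteq X$ gets limit color $i$, and adding one more large element of $X$ puts color $i$ on $[X]^{n+1}$, contradicting thinness; completeness holds because a permanent member is eventually certified against every sufficiently large tuple outside $A_i=\{\xi:\lim_s f(\xi,s)=i\}$, in particular against $X$-tuples. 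This also dissolves your worry about ``useless colors'': color $i$ is simply the marker of block $i$ for every $i$, and it is irrelevant whether $\bar f$ also uses that color.

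A second, smaller point: your claim that for each $\sigma$ only finitely many middle coordinates are ever touched is a design goal, not a consequence of what you wrote, and with the certificate rule above an extraction recolors a large finite batch of tuples, not just ``the tuple where the block acts''. The paper gets the limit clause (ii) from two specific devices you would need to add: the convention that a certificate $\langle b\rangle\xi$ requires $b<\min\xi$ (so a requirement initialized to a fresh large witness can never later spoil a column of a previously fixed $\sigma$), and initialization of all lower-priority requirements whenever an extraction occurs, so that for each $\sigma$ only the finitely many extractions by witnesses below $\min\sigma$ can force $\lim_y f(\sigma,x,y)\neq\bar f(\sigma,x)$, and each such extraction affects only finitely many $x$.
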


\begin{proof}[Proof of Theorem \ref{thm:TS-n-presv}]
By induction on $n > 1$, we construct a function $f$ which satisfies Theorem \ref{thm:TS-n-presv} and has an additional property that $\lim_x f(\sigma,x)$ exists for all $\sigma \in [\omega]^{n-1}$.

For $n = 2$, let $\bar{f}$ be any $\emptyset'$-computable function on $\omega$. Fix $f$ be as in Lemma \ref{lem:TS-n-presv} for $\bar{f}$. Then $f$ is as desired.

For $n > 2$, by relativizating the induction hypothesis we fix a $\emptyset'$-computable $\bar{f}: [\omega]^{n-1} \to \omega$ such that
\begin{itemize}
    \item $\lim_x \bar{f}(\sigma,x)$ exists for all $\sigma \in [\omega]^{n-2}$,
    \item if $i < n-1$ is positive and $X$ is an infinite $\bar{f}$-thin set then there exists $B \in (\Delta^{\emptyset'}_{i+1} - \Sigma^{\emptyset'}_i) \cap \Sigma^{\emptyset' \oplus X}_i$.
\end{itemize}
Apply Lemma \ref{lem:TS-n-presv} to get $f$ and $(B_i: i < \omega)$ corresponding to $\bar{f}$ and $n-1$. Suppose that $i < n$ is positive and $X$ is an infinite $f$-thin set. If $i = 1$ then $B_k \in \Sigma^X_1$ for some $k$ and thus $B_k \in (\Delta^{0}_{i+1} - \Sigma^{0}_i) \cap \Sigma^{X}_i$. Assume that $i > 1$ and $k \not\in f([X]^n)$. Then
$$
    \lim_y f(\xi,y) = \lim_{y \in X} f(\xi,y) \neq k
$$
for all $\xi \in [X]^{n-1}$ and thus
$$
    \lim_x \bar{f}(\sigma,x) = \lim_x \lim_y f(\sigma,x,y) = \lim_{x \in X} \lim_{y \in X} f(\sigma,x,y) \neq k
$$
for all $\sigma \in [X]^{n-2}$. As $\bar{f}$ is $\emptyset'$-computable, we can find $Y \in [X]^\omega$ such that $Y$ is computable in $\emptyset' \oplus X$ and $k \not\in \bar{f}([Y]^{n-1})$. By the choice of $\bar{f}$, there exists $B \in (\Delta^{\emptyset'}_{i} - \Sigma^{\emptyset'}_{i-1}) \cap \Sigma^{\emptyset' \oplus Y}_{i-1}$. So
$$
    B \in (\Delta^{0}_{i+1} - \Sigma^{0}_{i}) \cap \Sigma^{\emptyset' \oplus X}_{i-1} \subseteq (\Delta^{0}_{i+1} - \Sigma^{0}_{i}) \cap \Sigma^{X'}_{i-1} \subseteq (\Delta^{0}_{i+1} - \Sigma^{0}_{i}) \cap \Sigma^{X}_{i}.
$$
Hence $f$ is as desired.
\end{proof}

To prove Lemma \ref{lem:TS-n-presv}, we build several objects:
\begin{enumerate}
 \item A computable function $f: [\omega]^{n+1} \to \omega$ as required by the lemma. Let $A_i = \{\xi: \lim_s f(\xi,s) = i\}$.
 \item A computable trinary function $g$ which approximates the sequence $(B_i: i < \omega)$ in the following way: $\lim_s g(i,x,s)$ exists for each $(i,x)$ and \emph{no} $B_i = \{x: \lim_s g(i,x,s) = 1\}$ is $\Sigma^0_1$.
 \item A uniformly $\Sigma^0_1$ sequence $(U_i: i < \omega)$ such that
 \begin{itemize}
  \item if $x \in B_i$ then $\<x\>\xi \in U_i$ for all $\xi \not\in A_i$ with $\min \xi$ sufficiently large, and
  \item if $x \not\in B_i$ and $\<x\>\xi \in U_i$ then $\xi \in A_i$.
 \end{itemize}
\end{enumerate}

We guarantee that $\lim_s f(\xi,s)$ exists by ensuring that $f(\xi,s)$ changes at most finitely often. We apply the same strategy to achieve the existence of $\lim_s g(i,x,s)$. To achieve $\lim_x \lim_y f(\sigma,x,y) = \lim_x \bar{f}(\sigma,x)$ for $n > 1$ and $\sigma \in [\omega]^{n-1}$, we fix a computable approximation $(\bar{f}_s: s < \omega)$ of $\bar{f}$ and ensure that $f(\sigma,x,s) = \bar{f}_s(\sigma,x)$ for $x$ and $s$ sufficiently large.

To make $B_i \not\in \Sigma^0_1$, we employ a finite injury argument to satisfy the following requirements:
$$
    R_{i,j}: B_i \neq W_j.
$$
We follow the Friedberg-Muchnik construction to meet a single $R_{i,j}$: we pick a \emph{witness} for $R_{i,j}$, say $b$, and put it in $B_i$ by defining $g(i,b,s) = 1$. If $b \in W_{j,t}$ at stage $t > s$ then we remove $b$ from $B_i$ by defining $g(i,b,t) = 0$.

To meet the first part of condition (3) above, at each stage $s$ we enumerate $\<b\>\xi$ in $U_{i,s}$ if $b < \min \xi \leq \max \xi < s$, $g(i,b,s) = 1$ and $f(\xi,s) \neq i$. However, this action may cause some problem for the second part of condition (3), since later $b$ could be removed from $B_i$ for the sake of some $R_{i,j}$. To work around this problem, if we remove $b$ from $B_i$ at stage $t > s$ then we put $\xi$ in $A_i$ by defining $f(\xi,t') = i$ for all $t' \geq t$. Note that at each stage we only enumerate a finite part of $U_i$. So the above action defining $f(\xi,t)$ affects at most finitely many $\xi$. Thus, if $n > 1$ and $\sigma \in [\omega]^{n-1}$ then this action causes $\lim_y f(\sigma,x,y) \neq \bar{f}(\sigma,x)$ for only finitely many $x$. We initialize all $R_{i',j'}$ with lower priorities, so that eventually they will have witnesses greater than $\min \sigma$. Thus at later stages $R_{i',j'}$ with lower priority will not require $\lim_y f(\sigma,x',y) \neq \bar{f}(\sigma,x')$ for any $x'$.

\medskip

\noindent \emph{The construction.}

Recall that we fix a computable approximation $(\bar{f}_s: s < \omega)$ of $\bar{f}$.

At stage $s$, if $s > 0$ then we assume that:
\begin{itemize}
 \item $f$ is defined on $[s]^{n+1}$;
 \item $g(i,x,r)$ is defined for all $(i,x)$ and $r < s$ and if $g(i,x,s-1) = 1$ then both $i$ and $x$ are less than $s$;
 \item each $U_{i,s-1}$ is finite.
\end{itemize}

The construction at stage $s$ consists of three parts.

(i) We take care of $R_{i,j}$'s here. A requirement $R_{i,j}$ \emph{requires attention} if either of the following conditions holds:
\begin{itemize}
 \item $R_{i,j}$ does not have a witness defined;
 \item $R_{i,j}$ has a witness (say $b$) defined and $g(i,b,s-1) = W_{j,s}(b) = 1$.
\end{itemize}

Pick the least $\<i,j\>$ with $R_{i,j}$ requiring attention and say that $R_{i,j}$ \emph{receives attention}.

Suppose that $R_{i,j}$ does not have a witness. Perform the following actions:
\begin{itemize}
 \item let $s$ be its witness and let $g(i,s,s) = 1$;
 \item for $\<i',j'\> < \<i,j\>$, let $g(i',b,s) = g(i',b,s-1)$ where $b$ is the witness of $R_{i',j'}$;
 \item proceed to (ii).
\end{itemize}

Suppose that $R_{i,j}$ has a witness $b$ defined. Perform the following actions:
\begin{itemize}
 \item let $g(i,b,s) = 0$;
 \item for each $\<i',j'\> < \<i,j\>$ and the witness $b'$ of $R_{i',j'}$, let $g(i',b',s) = g(i',b',s-1)$;
 \item for each $\<i',j'\> > \<i,j\>$ and the witness $b'$ of $R_{i',j'}$, let $g(i',b',s) = 1$ and let the witness of $R_{i',j'}$ be undefined. In other words, $R_{i',j'}$ is initialized and it will not have a witness defined until it receives attention again.
\end{itemize}

(ii) We define $f$ and $g$.
\begin{itemize}
 \item If $s = 0$ then let $g(i,x,s) = 0$ for all $i$ and $x$; otherwise, let $g(i,x,s) = g(i,x,s-1)$ for each $(i,x)$ such that $g(i,x,s)$ is not defined in (i).
 \item If $\xi \in [s]^n$ and $\<b\>\xi \in U_{i,s-1}$ for an active witness $b$ of some $R_{i,j}$ and if $g(i,b,s) = 0$, then let $f(\xi,s) = i$; otherwise let $f(\xi,s) = \bar{f}_s(\xi)$.
\end{itemize}

(iii) We define $U_{i,s}$ by
$$
    U_{i,s} = U_{i,s-1} \cup \{\<x\>\xi: x < \min \xi \leq \max \xi < s, g(i,x,s) = 1, f(\xi,s) \neq i\}.
$$

This ends the construction at stage $s$.

\medskip

\noindent \emph{The verification.}

\begin{lemma}\label{lem:TS-R}
Each $R_{i,j}$ receives attention finitely often and is satisfied.
\end{lemma}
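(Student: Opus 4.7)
The plan is to run a standard Friedberg--Muchnik-style finite injury argument by induction on the priority ordering $\<i,j\>$. Both claims (finitary attention and satisfaction) will be proved together, since the bound on attention will supply the permanent state of each witness, from which satisfaction is immediate.

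The induction hypothesis is that for every $\<i',j'\> < \<i,j\>$ the requirement $R_{i',j'}$ receives attention only finitely often; pick a stage $s_0$ after which no such $R_{i',j'}$ acts. I want to argue that after $s_0$, the requirement $R_{i,j}$ receives attention at most twice. The key observation is that after $s_0$, $R_{i,j}$ is never initialized by a higher-priority action, so once a witness is picked it persists. Tracking the two possible triggers in clause (i) of the construction: the first trigger (no witness) can fire at most once after $s_0$, at which stage $R_{i,j}$ chooses some witness $b > s_0$ and sets $g(i,b,s) = 1$; the second trigger (witness $b$ with $g(i,b,s-1) = W_{j,s}(b) = 1$) fires at most once thereafter, since acting on it sets $g(i,b,s) = 0$ and no later stage can restore $g(i,b,\cdot)$ to $1$ (only an initialization by a higher-priority requirement could do so, and none occurs after $s_0$). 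Hence $R_{i,j}$ receives attention at most twice after $s_0$, and in total finitely often.

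For satisfaction, let $b$ be the permanent witness of $R_{i,j}$ after its final action and split into cases. If $b \notin W_j$, then $R_{i,j}$ never has its second trigger activated, so $g(i,b,s) = 1$ for all sufficiently large $s$, which places $b \in B_i \setminus W_j$. If $b \in W_j$, then the second trigger fires at some stage, after which $g(i,b,s) = 0$ permanently, which places $b \in W_j \setminus B_i$. Either way, $B_i \neq W_j$.

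The only point requiring a bit of care is the persistence of $g(i,b,s)$ once $R_{i,j}$ has stopped acting. This follows by inspecting clauses (i) and (ii) of the construction: entries $g(i,b,\cdot)$ for a witness $b$ of $R_{i,j}$ are only modified by $R_{i,j}$ itself, or by initialization from a strictly higher-priority requirement. Both sources are shut off after $s_0$ and the two actions of $R_{i,j}$, so the final value of $g(i,b,\cdot)$ is indeed preserved and witnesses $B_i \neq W_j$. I do not anticipate a genuine obstacle here; the argument is a textbook finite-injury verification and the slightly unusual re-setting of lower-priority witnesses to $1$ under initialization does not affect any higher-priority $R_{i,j}$ since the relevant indices are strictly greater.
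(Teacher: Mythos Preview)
Your proposal is correct and follows essentially the same finite-injury induction on the priority ordering $\<i,j\>$ as the paper's proof: once higher-priority requirements settle, $R_{i,j}$ picks a permanent witness, acts at most once more, and the resulting limit value of $g(i,b,\cdot)$ witnesses $B_i \neq W_j$. Your slightly more explicit bookkeeping on the persistence of $g(i,b,\cdot)$ is fine and matches the construction.
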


Note that in this lemma $B_i$ is to be understood as a $\Sigma^0_2$ set.

\begin{proof}
We prove by induction on $\<i,j\>$. Fix $\<i,j\>$ and assume that all $R_{i',j'}$'s with $\<i',j'\> < \<i,j\>$ stop receiving attention after stage $s_0$. We may assume that $R_{i,j}$ receives attention and has its witness $x = s_0$ defined at stage $s_0$. As no $R_{i',j'}$ with $\<i',j'\> < \<i,j\>$ receives attention after stage $s_0$, $R_{i,j}$ has its witness $b = s_0$ at all later stages.

By the construction, $g(i,b,s_0) = 1$. If $b \not\in W_j$ then $R_{i,j}$ receives no attention after stage $s_0$ and is satisfied since $B_i(b) = \lim_s g(i,b,s) = 1 \neq 0 = W_j(b)$. Suppose that $R_{i,j}$ receives attention again at stage $s_1 > s_0$. Then $g(i,b,s_1-1) = W_{j,s_1}(b) = 1$ and $g(i,b,s_1) = 0$. By the construction, $R_{i,j}$ receives no attention after stage $s_1$ and $\lim_s g(i,b,s) = 0 \neq 1 = W_j(b)$, thus $R_{i,j}$ is satisfied.
\end{proof}

\begin{lemma}\label{lem:TS-f}
$f$ is well-defined and computable, $\lim_s f(\xi,s)$ exists for all $\xi \in [\omega]^{n}$ and if $n > 1$ then $\lim_x \lim_s f(\sigma,x,s) = \lim_x \bar{f}(\sigma,x)$ for all $\sigma \in [\omega]^{n-1}$.
\end{lemma}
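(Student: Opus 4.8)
The plan is to extract everything directly from \emph{the construction} above together with Lemma~\ref{lem:TS-R}. Computability of $f$ is immediate: at stage $s$ each clause---finding the least $\<i,j\>$ that requires attention, updating the values $g(i,x,s)$, enumerating into $U_{i,s}$, and finally assigning $f(\xi,s)$ in (ii)---refers only to finitely much data computed at earlier stages, to the computable approximation $(\bar f_s)$, and to the stage-$s$ approximations $W_{j,s}$. For well-definedness one has only to inspect clause (ii): $f(\xi,s)$ is set to $i$ by the ``exceptional'' rule precisely when some active witness $b$ of a requirement $R_{i,j}$ satisfies $\<b\>\xi\in U_{i,s-1}$ and $g(i,b,s)=0$; since a number is the witness of at most one requirement at a time, such a $b$ determines $i$, and in the (a priori possible) case that several pairs $(i,b)$ qualify simultaneously the construction is to be read with the convention that $f(\xi,s):=i$ for the least $\<i,j\>$ among the qualifying requirements. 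In every other case $f(\xi,s)=\bar f_s(\xi)$, so $f$ is total.

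For the existence of $\lim_s f(\xi,s)$, fix $\xi\in[\omega]^n$. Every witness below $\min\xi$ is chosen at a stage below $\min\xi$, so only finitely many requirements ever hold a witness $<\min\xi$; by Lemma~\ref{lem:TS-R} each of them receives attention only finitely often, so there is a stage $s_\xi$ after which the active witnesses $b<\min\xi$, the values $g(i,b,s)$ with $b<\min\xi$, and the truth of ``$\<b\>\xi\in U_{i,s-1}$'' for such $b$ are all constant (note that once $g(i,b,\cdot)$ is constantly $1$ the pair $(i,b)$ can never trigger the exceptional rule, and once $g(i,b,\cdot)$ is constantly $0$ no string $\<b\>\zeta$ is enumerated into $U_i$ any more). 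Hence for $s>s_\xi$ the set of triggering pairs $(i,b)$ for $\xi$ is independent of $s$: if it is nonempty, $f(\xi,s)$ equals the selected colour for all $s>s_\xi$; if it is empty, $f(\xi,s)=\bar f_s(\xi)$ for all $s>s_\xi$, which converges to $\bar f(\xi)$ since $(\bar f_s)$ is a computable approximation of $\bar f$. Either way $\lim_s f(\xi,s)$ exists.

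Finally, fix $n>1$ and $\sigma\in[\omega]^{n-1}$. For every $x$ with $\max\sigma<x$ one has $\min(\sigma\<x\>)=\min\sigma$, so the previous analysis applies uniformly in $x$: $\lim_s f(\sigma\<x\>,s)$ can differ from $\lim_s\bar f_s(\sigma\<x\>)=\bar f(\sigma\<x\>)$ only when the exceptional rule eventually fires for $\sigma\<x\>$, that is, only when $\<b\>\sigma\<x\>\in U_i$ for some eventual witness $b<\min\sigma$ whose value $g(i,b,\cdot)$ is eventually $0$. There are only finitely many such pairs $(i,b)$, and for each of them $b$ is removed from $B_i$ at exactly one stage $t$, after which $g(i,b,\cdot)=0$ and nothing of the form $\<b\>\cdot$ enters $U_i$; since $U_{i,t-1}$ is finite, only finitely many strings $\<b\>\sigma\<x\>$ belong to $U_i$. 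Consequently $\lim_s f(\sigma\<x\>,s)=\bar f(\sigma\<x\>)$ for all but finitely many $x$, so $\lim_x\lim_s f(\sigma\<x\>,s)=\lim_x\bar f(\sigma,x)$, the latter limit existing by the hypothesis on $\bar f$.

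The step I expect to be the real work is the stabilization analysis in the second and third paragraphs: isolating the finitely many pairs $(i,b)$ that can ever influence $f(\xi,\cdot)$, and checking that the side effect of the exceptional rule---which permanently redefines $f(\zeta,\cdot):=i$ for \emph{every} $\zeta$ with $\<b\>\zeta\in U_{i,t-1}$ at the removal stage $t$---reaches only finitely many arguments of each fixed shape $\sigma\<x\>$. This is exactly where the finiteness of every $U_{i,s}$ and the fact that witnesses are chosen as stage numbers are used, and it is what simultaneously yields convergence of $f(\xi,\cdot)$ and agreement of its iterated limit with that of $\bar f$.
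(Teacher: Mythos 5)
Your stabilization arguments for the two limit claims are correct and essentially the paper's own: the finiteness of the relevant witnesses below $\min\xi$, Lemma \ref{lem:TS-R}, and the monotonicity of the $U_i$'s give the eventual constancy of the trigger data, and your third paragraph (each pair $(i,b)$ with $g(i,b,\cdot)$ eventually $0$ contributes only the finitely many strings $\langle b\rangle\sigma\langle x\rangle$ already in the finite set $U_{i,t-1}$ at the unique removal stage $t$) is a clean variant of the paper's argument, which instead notes that at the removal stage all lower-priority requirements are initialized and thereafter hold witnesses larger than $\max\sigma$.

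The genuine shortfall is the treatment of well-definedness. You observe the possible conflict when two pairs $(i_1,b_1)$, $(i_2,b_2)$ qualify in clause (ii) for the same $\xi$ and resolve it by decreeing a least-priority tie-break, i.e.\ by amending the construction. But the lemma's assertion that $f$ is well-defined is precisely the claim that clause (ii), as written, never issues conflicting instructions, and this has a real proof via the priority mechanism: if $(i,b)$ qualifies at stage $s$, then at the stage $s_0\le s$ where $g(i,b,\cdot)$ dropped from $1$ to $0$ we already had $\langle b\rangle\xi\in U_{i,s_0-1}$, hence $\max\xi<s_0$; at that stage all lower-priority requirements are initialized, so from $s_0$ on every witness they acquire is a stage number $\ge s_0>\max\xi$, and therefore no second pair with witness below $\min\xi$ can qualify at stage $s$ (apply this to the highest-priority qualifying requirement). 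This uniqueness is not a cosmetic point that a convention can replace: the verification of Lemma \ref{lem:TS-U} needs that once $b$ is removed from $B_i$ with $\langle b\rangle\xi\in U_{i}$, the construction sets $f(\xi,s)=i$ at \emph{every} later stage, which under your tie-break could fail if some other qualifying pair of higher priority were allowed to override the color. So you should replace the convention by the uniqueness argument above; with that change the rest of your proof stands.
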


\begin{proof}
Suppose that $f(\xi,s) \neq \bar{f}_s(\xi)$. Then at stage $s$ there exists $R_{i,j}$ such that $R_{i,j}$ has an active witness $b$, $\<b\>\xi \in U_{i,s-1}$ and $g(i,b,s) = 0$. By (i) and (iii) of the construction, there exists $s_0 \leq s$ with $g(i,b,s_0-1) = 1 \neq 0 = g(i,b,s_0)$ and $\<b\>\xi \in U_{s_0-1}$. At stage $s_0$, $R_{i,j}$ receives attention and all $R_{i',j'}$'s with $\<i',j'\> > \<i,j\>$ are initialized. So at any stage $t \geq s_0$, $R_{i',j'}$ with $\<i',j'\>$ cannot have an active witness $b' < \min \xi$. It follows that at stage $s$ the above $R_{i,j}$ is unique. Hence $f$ is well-defined. The construction guarantees that $f$ is computable.

To prove the existence of $\lim_s f(\xi,s)$, pick a stage $s$ such that no $R_{i,j}$ with a witness less than $\min \xi$ receives attention after stage $s$. At every stage $t > s$, either there exists exactly one fixed $R_{i,j}$ with an active witness $b$ such that $\<b\>\xi \in U_{i,t}$, or there is no such $R_{i,j}$. In the former case $\lim_s f(\xi,s) = i$ and in the latter $\lim_s f(\xi,s) = \lim_s \bar{f}_s(\xi) = \bar{f}(\xi)$.

Suppose that $n > 1$, $\sigma \in [\omega]^{n-1}$ and $f(\sigma,x,s_1) \neq \bar{f}_{s_1}(\sigma,x)$. Then there is exactly one $R_{i,j}$ with an active witness $b$ such that $\<b\>\sigma\<x\> \in U_{i,s_1-1}$ and $g(i,b,s_1) = 0$. Let $s_0 \leq s_1$ be the stage such that $g(i,b,s_0 - 1) = 1$ and $g(i,b,s_0) = 0$ and $\<b\>\sigma\<x\> \in U_{i,s_0-1}$. Then $R_{i,j}$ receives attention and all $R_{i',j'}$ with $\<i',j'\> > \<i,j\>$ are initialized at stage $s_0$. So at any stage $t > s_0$, if $\<i',j'\> > \<i,j\>$ and $R_{i',j'}$ has an active witness $b'$ then $b' > \max \sigma$ and thus there exists \emph{no} $\<b'\>\sigma\<x'\> \in U_{i',t}$. It follows that $\lim_s f(\sigma,x,s) \neq \bar{f}(\sigma,x)$ for at most finitely many $x$. As $\lim_x \bar{f}(\sigma,x)$ exists, $\lim_x \lim_s f(\sigma,x,s) = \lim_x \bar{f}(\sigma,x)$.
\end{proof}

\begin{lemma}\label{lem:TS-g}
$g$ is computable and stable.
\end{lemma}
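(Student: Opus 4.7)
The plan is to observe that both computability and stability of $g$ fall out of the construction together with the finite-injury bound provided by Lemma \ref{lem:TS-R}. The function $g$ is defined by cases that are uniformly effective in the stage $s$: at stage $0$ all values are $0$, and at each later stage the finitely many modifications in part (i) (tied to the unique requirement receiving attention, or to the initializations it triggers) together with the copy-over rule in part (ii) are all computable, so $g$ is primitively recursive in the construction and in particular computable.

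For stability, I would argue that for each fixed pair $(i,x)$ there are only finitely many stages $s$ at which $g(i,x,s) \neq g(i,x,s-1)$. The key observation is that a change of $g(i,x,s)$ can occur only inside part (i), and hence only if at stage $s$ the number $x$ is, or has been, the witness of some requirement of the form $R_{i,j}$. Since new witnesses are always chosen equal to the current stage number, there is at most one requirement $R^{(x)}$ that could ever hold $x$ as a witness (namely the one that receives attention at stage $x$ while lacking a witness), and in particular at most one $j$ with $R^{(x)} = R_{i,j}$ for our fixed $i$. If no such $R_{i,j}$ exists, then $g(i,x,s) = 0$ for every $s$ and we are done.

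If $R^{(x)} = R_{i,j}$, then $g(i,x,s)$ can move only at a stage at which either $R_{i,j}$ itself receives attention (zeroing the value) or some higher-priority requirement $R_{i',j'}$ with $\langle i',j'\rangle < \langle i,j\rangle$ receives attention and initializes $R_{i,j}$ (setting the value to $1$). By Lemma \ref{lem:TS-R}, each of these finitely many requirements receives attention only finitely often, so there are only finitely many such stages and $\lim_s g(i,x,s)$ exists. I expect no real obstacle here; the only point demanding care is the bookkeeping showing that $x$ can be a witness of at most one requirement, which is an immediate consequence of the convention that a freshly chosen witness equals the stage number.
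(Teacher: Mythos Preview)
Your proposal is correct and follows essentially the same approach as the paper: both argue that a change in $g(i,x,\cdot)$ can only be triggered in part (i) of the construction, tie such changes to finitely many requirements (via the fact that witnesses are fresh stage numbers), and then invoke the finite-injury bound of Lemma \ref{lem:TS-R}. The only cosmetic difference is bookkeeping: you track the unique requirement $R^{(x)}$ that ever holds $x$ as a witness together with the higher-priority requirements that can initialize it, whereas the paper more tersely tracks all requirements with an active witness $b \leq x$; both routes lead to the same conclusion.
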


\begin{proof}
By the construction, $g(i,x,s-1) \neq g(i,x,s)$ happens only if at stage $s$ some $R_{i,j}$ with an active witness $b \leq x$ receives attention. But there are at most finitely many $R_{i,j}$'s having witnesses not greater than $x$. So we can pick a stage $t$ such that no $R_{i,j}$ with an active witness $b \leq x$ receives attention after stage $t$. Then $g(i,x,t) = \lim_s g(i,x,s)$.
\end{proof}

Recall that $A_i = \{\xi: \lim_s f(\xi,s) = i\}$.

\begin{lemma}\label{lem:TS-U}
For each $i$, if $b \in B_i$ then $\<b\>\xi \in U_i$ for all $\xi \not\in A_i$ with $\min \xi$ sufficiently large, and if $b \not\in B_i$ and $\<b\>\xi \in U_i$ then $\xi \in A_i$.
\end{lemma}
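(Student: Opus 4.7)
The plan is to prove the two halves of Lemma \ref{lem:TS-U} by tracking the behaviour of the sequence $g(i,b,\cdot)$ together with the active-witness status of $b$ under the priority-based construction.

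For the second part, suppose $b \not\in B_i$ and $\<b\>\xi \in U_i$. The enumeration rule in part (iii) of the construction only puts $\<b\>\xi$ into $U_{i,s}$ at a stage where $g(i,b,s) = 1$, so $g(i,b,\cdot)$ must take value $1$ at some stage. Since $g(i,b,0) = 0$, this can happen only after $b$ has been picked as the witness of some requirement $R_{i,j}$, which occurs (if at all) at stage $b$. I then analyse the possible trajectories of $g(i,b,\cdot)$: it may switch from $1$ back to $0$ when $R_{i,j}$ receives attention with $b$ as active witness, and it may be reset from $0$ back to $1$ when $R_{i,j}$ is initialized by a higher-priority attention, in which case $b$ becomes orphaned and stays so forever, since fresh witnesses are always chosen equal to the current stage. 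Because $b \not\in B_i$, the final transition of $g(i,b,\cdot)$ must be to $0$; let $s_*$ be that stage. After $s_*$, no higher-priority $R_{i',j'}$ can receive attention (otherwise $b$ would be orphaned with $g(i,b,\cdot) = 1$ thereafter, contradicting $b \not\in B_i$), so $b$ remains the active witness of $R_{i,j}$ and $g(i,b,s) = 0$ for every $s \geq s_*$. Consequently, at every stage $s > \max(s_*, \max \xi)$, the $f$-definition clause in part (ii) sets $f(\xi,s) = i$, and therefore $\lim_s f(\xi,s) = i$, i.e., $\xi \in A_i$.

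For the first part, suppose $b \in B_i$ and $\xi \not\in A_i$. Fix a stage $s_0$ with $g(i,b,s) = 1$ for all $s \geq s_0$. By Lemma \ref{lem:TS-f} the limit $\lim_s f(\xi,s)$ exists, and since $\xi \not\in A_i$ this limit is not $i$, so there is a stage $s_1$ with $f(\xi,s) \neq i$ for all $s \geq s_1$. Requiring $\min \xi > b$ and picking any $s > \max(s_0, s_1, \max \xi)$, the enumeration clause in part (iii) adds $\<b\>\xi$ to $U_{i,s}$, whence $\<b\>\xi \in U_i$.

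The main obstacle is the bookkeeping in the second part: one must rule out the possibility that, after the final $1 \to 0$ transition of $g(i,b,\cdot)$, some later higher-priority injury resets $g(i,b,\cdot)$ to $1$. The key observation is that once $b$ is orphaned the value $g(i,b,\cdot)$ is frozen at $1$ (as $b$ can never be re-chosen as a witness at any stage $s > b$), so such a scenario would place $b$ in $B_i$, contrary to hypothesis.
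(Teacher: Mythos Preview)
Your proof is correct and follows essentially the same route as the paper's. Both arguments handle the first clause by the enumeration rule (iii) together with the stability of $f$ and $g$, and handle the second clause by observing that if $b\notin B_i$ then $b$ can never be orphaned (since orphaning sets $g(i,b,\cdot)=1$ permanently), so $b$ stays the active witness of $R_{i,j}$ with $g(i,b,\cdot)=0$ from some stage on, whence clause (ii) forces $f(\xi,s)=i$ cofinally.
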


\begin{proof}
It follows from (iii) of the construction at each stage and the stability of $f$ and $g$ that if $b \in B_i$ then $\<b\>\xi \in U_i$ for all $\xi \not\in A_i$ with $\min \xi > b$.

Suppose that $b \not\in B_i$ and $\<b\>\xi \in U_i$. By the construction of $U_i$, there is a stage $s_0$ such that $g(i,b,s_0) = 1$ and $f(\xi,s_0) \neq i$. As $b \not\in B_i$, there is a stage $s_1 > s_0$ such that $g(i,b,s_1 - 1) = 1$ and $g(i,b,s_1) = 0$. By (i) of the construction at stage $s_1$, $b$ is the active witness of some $R_{i,j}$, $R_{i,j}$ receives attention at stage $s_1$ and $f(\xi,s_1) = i$.

If some $R_{i',j'}$ with $\<i',j'\> < \<i,j\>$ receives attention at a stage $s > s_1$ then $g(i,b,s) = 1$ and $b$ cannot become an active witness for any requirement at any stage $t > s$. By (ii) of the construction, $b \in B_i$, contradicting our choice of $b$. So $R_{i,j}$ is not initialized after stage $s_1$ and $g(i,b,s) = 0$ for any $s \geq s_1$. By (ii) of the construction, $f(\xi,s) = i$ at every stage $s > s_1$. Hence $\lim_s f(\xi,s) = i$ and $\xi \in A_i$.
\end{proof}

It follows from Lemma \ref{lem:TS-f} that $f$ is as desired. By Lemmata \ref{lem:TS-g} and \ref{lem:TS-R}, $B_i = \{x: \lim_s g(i,x,s) = 1\}$'s yield a uniformly $\Delta^0_2$ sequence $(B_i: i < \omega)$ with each member not in $\Sigma^0_1$. If $X$ is an infinite $f$-thin set then $i \not\in f([X]^{n+1})$ for some $i$ and thus $B_i \in \Sigma^X_1$ by Lemma \ref{lem:TS-U}. So we have proven Lemma \ref{lem:TS-n-presv}.

\begin{corollary}\label{cor:FS-n-presv}
If $n > 1$ then $\FS^n$ does not admit preservation of $\Delta^0_{i+1}$-definitions for any positive $i < n$.
\end{corollary}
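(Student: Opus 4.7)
The plan is to reduce the corollary to Theorem \ref{thm:TS-n-presv} via Proposition \ref{pro:sep-by-presv}(3), exploiting the fact that $\FS^n$ implies $\TS^n$ over $\RCA$ at each fixed arity $n$. Both $\FS^n$ and $\TS^n$ are true $\Pi^1_2$ sentences, so with $\Phi = \FS^n$, $\Psi = \TS^n$, and $\Xi = \Delta^0_{i+1}$, Proposition \ref{pro:sep-by-presv}(3) gives: if $\RCA + \FS^n \vdash \TS^n$ and $\TS^n$ fails preservation of $\Delta^0_{i+1}$-definitions, then so does $\FS^n$. The second hypothesis is Theorem \ref{thm:TS-n-presv}, so only the derivation $\RCA + \FS^n \vdash \TS^n$ needs to be recorded.

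For the level-by-level derivation, given $f:[\omega]^n \to \omega$ and an infinite $f$-free set $H$, I set $H' = H \setminus \{\min H\}$. For every $\sigma \in [H']^n$ the element $\min H$ lies in $H - \sigma$, so freeness forces $f(\sigma) \neq \min H$; hence $H'$ is an infinite $f$-thin set (avoiding the color $\min H$), and $H' \leq_T H$. This goes through in $\RCA$.

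Equivalently, one may argue contrapositively and directly: fix the computable $f:[\omega]^n \to \omega$ produced by Theorem \ref{thm:TS-n-presv}. If some infinite $f$-free set $H$ preserved $\Delta^0_{i+1}$-definitions, then $H' = H \setminus \{\min H\}$ would be an infinite $f$-thin set computable from $H$, so $\Sigma^{H'}_i \subseteq \Sigma^{H}_i$, and Theorem \ref{thm:TS-n-presv} would produce a $B \in (\Delta^0_{i+1} - \Sigma^0_i) \cap \Sigma^{H'}_i \subseteq (\Delta^0_{i+1} - \Sigma^0_i) \cap \Sigma^H_i$, contradicting the preservation assumption via Proposition \ref{pro:presv-equiv}(3).

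I do not anticipate any serious obstacle: both ingredients are already in place, and the only new observation needed is the one-line construction of a thin set from a free set by deleting the least element.
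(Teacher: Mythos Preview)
Your proposal is correct and follows essentially the same approach as the paper: both invoke Proposition~\ref{pro:sep-by-presv}(3) together with Theorem~\ref{thm:TS-n-presv} and the implication $\RCA + \FS^n \vdash \TS^n$. The only difference is that the paper cites Cholak et~al.\ \cite[Theorem~3.2]{Cholak.Giusto.ea:2005.freeset} for this implication, whereas you supply the standard one-line argument (delete $\min H$ from a free set to obtain a thin set) directly.
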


\begin{proof}
By Cholak et al. \cite[Theorem 3.2]{Cholak.Giusto.ea:2005.freeset}, $\RCA + \FS^n \vdash \TS^n$. So the corollary follows from Proposition \ref{pro:sep-by-presv} and Theorem \ref{thm:TS-n-presv}.
\end{proof}

Recently in private communications, Patey showed that $\RCA + \RRT^{2n+1}_2 \vdash \STS^{n+1}$ for $n > 0$. So the definability strength of $\RRT^{2n+1}_2$ for $n > 0$ is strictly stronger than that of $\RRT^2_2$.

\begin{corollary}\label{cor:RRT-n-presv}
For $n > 0$, $\RRT^{2n+1}_2$ does not admit preservation of $\Delta^0_{i+1}$-definitions for any positive $i \leq n$.
\end{corollary}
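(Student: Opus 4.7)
The plan is to derive Corollary \ref{cor:RRT-n-presv} as an essentially immediate combination of three already-available ingredients: Patey's implication $\RCA + \RRT^{2n+1}_2 \vdash \STS^{n+1}$ (stated in the paragraph just before the corollary), the non-preservation result for the stable thin set theorem contained in Theorem \ref{thm:TS-n-presv}, and the transfer principle of Proposition \ref{pro:sep-by-presv}(3).

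First I would apply Theorem \ref{thm:TS-n-presv} to $\STS^{n+1}$. Since $n > 0$, we have $n+1 > 1$, so the hypothesis of the theorem is satisfied, and the conclusion gives that $\STS^{n+1}$ does not admit preservation of $\Delta^0_{i+1}$-definitions for any positive $i < n+1$, i.e., for every $1 \leq i \leq n$. This matches exactly the range of $i$ appearing in the corollary's conclusion.

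Next I would invoke Proposition \ref{pro:sep-by-presv}(3) with $\Psi = \STS^{n+1}$ and $\bigwedge_j \Phi_j$ consisting of the single sentence $\Phi_0 = \RRT^{2n+1}_2$. The proposition's contrapositive form states: if $\RCA + \bigwedge_j \Phi_j \vdash \Psi$ and $\Psi$ fails preservation of $\Xi$-definitions, then some $\Phi_j$ must fail preservation of $\Xi$-definitions. With only one $\Phi_j$ this forces $\RRT^{2n+1}_2$ itself to fail preservation of $\Delta^0_{i+1}$-definitions for every $1 \leq i \leq n$, which is the corollary.

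There is essentially no obstacle: the substantive work has already been done in Theorem \ref{thm:TS-n-presv}, in Proposition \ref{pro:sep-by-presv}, and in Patey's unpublished implication. The only thing to verify cleanly is the range of indices, which I addressed above; Theorem \ref{thm:TS-n-presv} applied to $\STS^{n+1}$ yields non-preservation in exactly the range $1 \leq i \leq n$ asked for in Corollary \ref{cor:RRT-n-presv}, and the transfer through Proposition \ref{pro:sep-by-presv}(3) preserves this range verbatim.
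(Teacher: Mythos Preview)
Your proposal is correct and is exactly the argument the paper intends: the corollary is stated without proof immediately after noting Patey's implication $\RCA + \RRT^{2n+1}_2 \vdash \STS^{n+1}$, and it follows from that implication together with Theorem \ref{thm:TS-n-presv} and Proposition \ref{pro:sep-by-presv}(3) precisely as you outline, with the index range matching as you checked.
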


\section{Conclusion}\label{s:conclusion}

We summarize the known preservations and non-preservations in Table \ref{table} with references in parenthese. For simplification, we omit some easy consequences of the results in Table \ref{table}. For example, it is omitted that $\RT^2_2$ does not admit preservation of $\Delta^0_2$-definitions (Corollary \ref{cor:RT22-n-presv-D2}).

\renewcommand{\arraystretch}{1.25}
\begin{table}
  \centering
    \begin{tabu}{l|l|l}
      \hline
      & Preservations & Non-preservations \\
      \hline
      Arithmetic Hierarchy & Cohen generics (\ref{pro:generic-presv}) & \\
	& random reals (\ref{pro:random-presv}) & \\
	& $\Pi^0_1 \operatorname{G}$ (\ref{cor:PiG-presv}) & \\
	& $\WKL$ (\ref{thm:WKL-presv}) & \\
	& $\RRT^2_2$ (\ref{cor:RRT22-presv}) & \\
      \hline
      $\Sigma^0_{i+1}, \Pi^0_{i+1}, \Delta^0_{i+2}$ ($i > 0$) & $\RT^2$ (\ref{cor:RT22-presv}) & \\
      \hline
      $\Delta^0_{i+1}$ ($0 < i \leq n, n > 0$) & & $\STS^{n+1}$ (\ref{thm:TS-n-presv}) \\
      & & $\RRT^{2n+1}_2$ ($n > 0$, \ref{cor:RRT-n-presv}) \\
      \hline
      $\Delta^0_3$ & $\RRT^3_2$ (\ref{thm:RRT32-presv-D3}) & \\
      \hline
      $\Delta^0_2$ & $\COH$ (\ref{thm:COH-presv-D2}) & $\SADS$ (\ref{thm:SADS-n-presv}) \\
	& $\EM$ (\ref{thm:EM-presv-D2}) & \\
      \hline
      $\Sigma^0_1, \Pi^0_1$ & $\RT^2_2, \FS, \ART^n_{<\infty, d}$ (\ref{cor:presv-P1}) & $\RT^n_2$ ($n > 2$, \cite{Jockusch:1972.Ramsey}) \\
      \hline
    \end{tabu}
    \bigskip
    \caption{Preservations and Non-preservations}\label{table}
\end{table}

From Table \ref{table} and Proposition \ref{pro:sep-by-presv}, we can derive some consequences about provability strength. We present two examples here. The first is by examining the preservations and non-preservations of $\Delta^0_2$-definitions.

\begin{theorem}\label{thm:sep-1}
Let $\Phi$ be the conjunction of $\COH$, $\WKL$, $\RRT^2_2$, $\Pi^0_1 \operatorname{G}$ and $\EM$. Over $\RCA$, $\Phi$ does not imply any of $\SADS$, $\STS^2$, $\TS^2$ and $\FS^2$.
\end{theorem}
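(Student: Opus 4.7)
My plan is to apply Proposition \ref{pro:sep-by-presv}(3) with $\Xi$ taken to be $\Delta^0_2$. That proposition says: if a collection $(\Phi_i)_{i<\omega}$ of true $\Pi^1_2$ sentences all admit preservation of $\Delta^0_2$-definitions while a single $\Psi$ does not, then $\RCA + \bigwedge_i \Phi_i \not\vdash \Psi$. So the entire argument reduces to two lookups in the table already compiled in the paper.

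First, I would verify that each conjunct of $\Phi$ admits preservation of $\Delta^0_2$-definitions. For $\COH$ this is Theorem \ref{thm:COH-presv-D2}, and for $\EM$ this is Theorem \ref{thm:EM-presv-D2}. For $\WKL$, $\Pi^0_1 \operatorname{G}$, and $\RRT^2_2$ a much stronger conclusion is available: they all admit preservation of the arithmetic hierarchy by Theorem \ref{thm:WKL-presv}, Corollary \ref{cor:PiG-presv}, and Corollary \ref{cor:RRT22-presv} respectively, and preservation of the arithmetic hierarchy trivially specialises to preservation of $\Delta^0_2$-definitions. Thus every conjunct of $\Phi$ qualifies as a $\Phi_i$ in the hypothesis of Proposition \ref{pro:sep-by-presv}(3).

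Next I would verify that none of $\SADS$, $\STS^2$, $\TS^2$, $\FS^2$ admits preservation of $\Delta^0_2$-definitions. For $\SADS$ this is exactly Theorem \ref{thm:SADS-n-presv}. For $\STS^2$ and $\TS^2$, one takes $n = 2$ and $i = 1$ in Theorem \ref{thm:TS-n-presv}, which directly gives non-preservation of $\Delta^0_2$-definitions. For $\FS^2$, Corollary \ref{cor:FS-n-presv} (with $n = 2$ and $i = 1$) furnishes the same conclusion; alternatively, since $\RCA + \FS^2 \vdash \TS^2$, non-preservation for $\FS^2$ also follows from non-preservation for $\TS^2$ by the contrapositive in Proposition \ref{pro:sep-by-presv}(3).

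Applying Proposition \ref{pro:sep-by-presv}(3) four times, once for each of $\Psi \in \{\SADS, \STS^2, \TS^2, \FS^2\}$, completes the proof. There is no real obstacle here; the theorem is simply a packaging of the preservation/non-preservation results established earlier, and the only thing to be careful about is to cite the correct instance of Theorem \ref{thm:TS-n-presv} (and Corollary \ref{cor:FS-n-presv}) so that the definability-class $\Xi$ matches on both sides of the application of Proposition \ref{pro:sep-by-presv}(3).
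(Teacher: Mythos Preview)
Your proposal is correct and matches the paper's own approach exactly: the paper derives Theorem~\ref{thm:sep-1} by reading off the $\Delta^0_2$ row of Table~\ref{table} and invoking Proposition~\ref{pro:sep-by-presv}, which is precisely what you do, only with the individual references spelled out. There is nothing to add.
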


Theorem \ref{thm:sep-1} strengthens the following known results: $\RCA + \COH + \WKL \not\vdash \SADS$ (Hirschfeldt and Shore \cite{Hirschfeldt.Shore:2007}), $\RCA + \Pi^0_1 \operatorname{G} \not\vdash \SADS$ (Hirschfeldt, Shore and Slaman \cite{Hirschfeldt.Shore.ea:2009}), $\RCA + \RRT^2_2 \not\vdash \SADS$ (Csima and Mileti \cite{Csima.Mileti:2009.rainbow}), $\RCA + \RRT^2_2 \not\vdash \TS^2$ (Kang \cite{Kang:2013}), $\RCA + \EM \not\vdash \SADS$ (Lerman, Solomon and Towsner \cite{Lerman.Solomon.ea:2013}), and $\RCA + \EM \not\vdash \STS^2$ (Patey, unpublished). But the approach here is more uniform. Moreover, Proposition \ref{pro:sep-by-presv} allows us to stack $\Pi^1_2$ propositions with weak definability strength together. This is another advantage of our approach.

The next example follows from the $\Delta^0_3$ row of Table \ref{table}.

\begin{theorem}\label{thm:sep-RRT32-TS32}
Over $\RCA$, $\RRT^3_2$ does not imply any of $\STS^3, \TS^3, \FS^3$.
\end{theorem}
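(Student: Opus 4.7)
The plan is to apply Proposition \ref{pro:sep-by-presv}(3) with $\Phi = \RRT^3_2$ and with $\Psi$ ranging over $\STS^3$, $\TS^3$ and $\FS^3$, choosing $\Xi = \Delta^0_3$. By Theorem \ref{thm:RRT32-presv-D3}, $\RRT^3_2$ admits preservation of $\Delta^0_3$-definitions, so it suffices to verify that each of $\STS^3$, $\TS^3$ and $\FS^3$ fails to admit preservation of $\Delta^0_3$-definitions; then the separation follows immediately from the contrapositive form of Proposition \ref{pro:sep-by-presv}(3).

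For $\STS^3$ and $\TS^3$, I would simply quote Theorem \ref{thm:TS-n-presv} with $n = 3$ and $i = 2$: the theorem produces a computable stable $f\colon [\omega]^3 \to \omega$ such that every infinite $f$-thin set $X$ codes, via some $B \in (\Delta^0_3 - \Sigma^0_2) \cap \Sigma^X_2$, a properly $\Delta^0_3$ set that becomes $\Sigma^0_2$ in $X$. Such an $X$ therefore cannot preserve $\Delta^0_3$-definitions, and since every infinite $\TS^3$-solution is a thin set, and every infinite $\STS^3$-solution for a stable instance is likewise a thin set, neither principle admits preservation of $\Delta^0_3$-definitions.

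For $\FS^3$, I would invoke Corollary \ref{cor:FS-n-presv} directly (which is itself a consequence of Cholak, Giusto, Hirst and Jockusch's implication $\RCA \vdash \FS^n \to \TS^n$ together with Theorem \ref{thm:TS-n-presv}), taking $n = 3$ and $i = 2$ to conclude that $\FS^3$ does not admit preservation of $\Delta^0_3$-definitions.

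Combining these observations via Proposition \ref{pro:sep-by-presv}(3) yields $\RCA + \RRT^3_2 \not\vdash \Psi$ for each $\Psi \in \{\STS^3, \TS^3, \FS^3\}$, which is the desired statement. There is no genuine obstacle here beyond correctly lining up the preservation and non-preservation results already established; the entire force of the theorem comes from Theorem \ref{thm:RRT32-presv-D3} on one side and Theorem \ref{thm:TS-n-presv} (with Corollary \ref{cor:FS-n-presv}) on the other, so the proof reduces to a two-line application of Proposition \ref{pro:sep-by-presv}(3).
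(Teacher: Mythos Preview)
Your proposal is correct and matches the paper's approach exactly: the paper simply remarks that the theorem follows from the $\Delta^0_3$ row of Table \ref{table}, which amounts to combining Theorem \ref{thm:RRT32-presv-D3} with Theorem \ref{thm:TS-n-presv} (and Corollary \ref{cor:FS-n-presv}) via Proposition \ref{pro:sep-by-presv}(3), precisely as you have written.
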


Yet there are questions around Table \ref{table}. From Proposition \ref{pro:sep-by-presv} and the $\Delta^0_2$ row of the table, we can derive an almost complete classification of well-known $\Pi^1_2$ propositions below $\ACA$. However, we know just a little at rows above $\Delta^0_2$. Moreover, there is an interesting phenomenon: for either $\COH$ or $\EM$, we prove that each admits preservation of $\Delta^0_2$-definitions and preservation of definitions beyond the $\Delta^0_2$ level (implied by that of $\RT^2_2$). So it is natural to conjecture that these two preservations can be combined for both $\COH$ and $\EM$.

\begin{conjecture}
Both $\COH$ and $\EM$ admit preservation of the arithmetic hierarchy.
\end{conjecture}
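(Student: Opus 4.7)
The plan is to proceed by induction on $n > 0$, showing at level $n$ that both principles admit simultaneous preservation of $\Sigma^0_n$-, $\Pi^0_n$- and $\Delta^0_{n+1}$-definitions. Since Theorems \ref{thm:COH-presv-D2} and \ref{thm:EM-presv-D2} handle the base case $n = 1$, the content is the inductive step. By Proposition \ref{pro:presv-equiv}(4) the task reduces to the following: given $Z$, a $Z$-computable instance $\vec{A}$ (respectively tournament $R$) and a countable family $(B_i : i < \omega)$ with no $B_i \in \Sigma^Z_n$, produce a cohesive (respectively $R$-transitive) set $G$ with no $B_i \in \Sigma^{Z \oplus G}_n$.

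For $\COH$, I would lift the branching Mathias construction of Lemma \ref{lem:jump-cohesiv-presv-D3} (which treats the level $n = 2$) to arbitrary $n$. One builds an infinite binary tree $T$ and a strictly increasing labelling $f \colon T \to [\omega]^{<\omega}$ so that each $G = \bigcup_m f(P \uh m)$ for $P \in [T]$ is $\vec{A}$-cohesive. Conditions are multiple Mathias tuples $((\sigma_\mu : \mu \in I), X)$ whose reservoir $X$ is generalized-low$_{n-1}$ relative to $Z$, so that the relativised Low Basis Theorem remains available. One uses the $s$-$m$-$n$ theorem to reduce ``$m \in B_i$'' (a $\Sigma^{Z \oplus G}_n$ condition) to ``$\Phi_{s(i,m)}(Z \oplus G)$ is partial,'' exactly as in Claim \ref{clm:jump-cohesiv-presv-D3}. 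The large/small dichotomy then goes through, with the ``small'' set $W \in \Sigma^{Z \oplus \vec{h}}_n$ lifting the $\Sigma^0_2$ witness set of the $\Delta^0_3$-case to level $n$.

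For $\EM$, since $\RCA + \COH + \SEM \vdash \EM$, it suffices by Proposition \ref{pro:sep-by-presv} to prove the analogous preservation for $\SEM$. I would replicate the forcing of Lemma \ref{lem:SEM-presv-D2}, with two modifications: first, replace the $\Sigma^Z_1$ witness set $W$ in the density lemma (Lemma \ref{lem:EM-acceptable-density}) by its $\Sigma^Z_n$ analogue, obtained by quantifying over sufficiently low reservoirs and employing the $s$-$m$-$n$ reduction described above; and second, in the ``small'' case, invoke Theorem \ref{thm:WKL-presv} to select a $g$ in the $\Pi^Z_1$ class $\mathcal{G}$ such that $B_i \notin \Sigma^{Z \oplus g}_n$ for every $i$ simultaneously, using that $\WKL$ preserves the whole arithmetic hierarchy. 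A branching version of the acceptable-condition forcing (in the style of the $\COH$ proof above) is then needed in order to thread the reservoirs and the 2-colorings $g$ through successive stages without losing the required low-ness.

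The main obstacle is managing the interaction between the reservoir's complexity and the definability of the forcing relation at level $n$. For $\EM$ one must simultaneously ensure (i) that the reservoir is low enough that the largeness set $W$ in the density lemma stays $\Sigma^Z_n$; (ii) that the compatibility $\Pi^Z_1$ class $\mathcal{G}$ admits a member $g$ preserving every $B_i$ at level $n$; and (iii) that the post-extension reservoir $Y = g^{-1}(\epsilon) \cap X$ (for some $\epsilon \in \{0,1\}$) retains its low-ness property. Propagating these three constraints through a branching forcing at an arbitrary level $n$ is substantially more delicate than at level $n = 2$ for $\COH$ or level $n = 1$ for $\EM$, and is where the bulk of the technical work would lie.
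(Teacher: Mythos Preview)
The statement is a \emph{conjecture} in the paper: the author explicitly leaves it open and gives no proof. So there is no argument in the paper to compare your proposal against, and your task was not to reproduce a known proof but to attack an open problem.

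On the substance of your plan, there is a concrete gap. You write that one ``uses the $s$-$m$-$n$ theorem to reduce `$m \in B_i$' (a $\Sigma^{Z \oplus G}_n$ condition) to `$\Phi_{s(i,m)}(Z \oplus G)$ is partial','' exactly as in Claim~\ref{clm:jump-cohesiv-presv-D3}. But that reduction is specific to $n = 2$: ``$\Phi(G)$ is partial'' is a $\Sigma^G_2$ predicate, and it is precisely this that lets the paper encode $\Sigma^G_2$ membership as a totality/partiality question the forcing can decide. For $n > 2$ one would instead need to control partiality relative to $(Z \oplus G)^{(n-2)}$, which means the forcing must govern iterated jumps of the generic, not merely $G$ itself. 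Your suggestion to take reservoirs that are generalized-low$_{n-1}$ is a reasonable first move, but it does not by itself supply a large/small dichotomy at level $n$: one must formulate a notion of $n$-largeness whose failure is genuinely $\Sigma^Z_n$, and one must show that largeness is preserved under the branching steps while the reservoirs remain low enough. None of this is carried out, and it is not clear that the $n=2$ machinery lifts without a new idea.

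For $\EM$ the same objection applies to your proposed ``$\Sigma^Z_n$ analogue'' of the witness set $W$ in Lemma~\ref{lem:EM-acceptable-density}, and there is an additional difficulty you mention but do not resolve: after invoking Theorem~\ref{thm:WKL-presv} to pick $g \in \mathcal{G}$ preserving all $B_i$ at level $n$, the new reservoir $Y = g^{-1}(\epsilon) \cap X$ need not have any useful lowness property relative to $Z$, so the complexity bookkeeping required for the next stage may simply fail. Your final paragraph correctly flags this as the crux, but flagging it is not the same as solving it; as it stands the proposal is a plausible outline of where one might look, not a proof.
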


\bibliographystyle{plain}

\begin{thebibliography}{99}

\bibitem{Cholak.Giusto.ea:2005.freeset}
Peter~A. Cholak, Mariagnese Giusto, Jeffry~L. Hirst, and Carl~G. Jockusch, Jr.
\newblock Free sets and reverse mathematics.
\newblock In {\em Reverse mathematics 2001}, volume~21 of {\em Lect. Notes
  Log.}, pages 104--119. Assoc. Symbol. Logic, La Jolla, CA, 2005.

\bibitem{Cholak.Jockusch.ea:2001.Ramsey}
Peter~A. Cholak, Carl~G. Jockusch, and Theodore~A. Slaman.
\newblock On the strength of {R}amsey's theorem for pairs.
\newblock {\em J. Symbolic Logic}, 66(1):1--55, 2001.

\bibitem{Csima.Mileti:2009.rainbow}
Barbara Csima and Joseph Mileti.
\newblock The strength of the rainbow {R}amsey theorem.
\newblock {\em Journal of Symbolic Logic}, 74(4):1310--1324, 2009.

\bibitem{Dorais.Dzhafarov.ea:ta}
Fran\c{c}ois~G. Dorais, Damir~D. Dzhafarov, Jeffry~L. Hirst, Joseph~R. Mileti,
  and Paul Shafer.
\newblock On uniform relationships between combinatorial problems.
\newblock {\em Trans. Amer. Math. Soc.}, to appear.

\bibitem{Downey.Hirschfeldt:2010.book}
Rod Downey and Denis Hirschfeldt.
\newblock {\em Algorithmic Randomness and Complexity}.
\newblock Theory and Applications of Computability. Springer, 2010.

\bibitem{Harizanov:1998}
Valentina~S. Harizanov.
\newblock Turing degrees of certain isomorphic images of computable relations.
\newblock {\em Ann. Pure Appl. Logic}, 93(1-3):103--113, 1998.

\bibitem{Hirschfeldt:slicing}
Denis Hirschfeldt.
\newblock Slicing the truth: on the computability theoretic and reverse
  mathematical analysis of combinatorial principles.
\newblock preprint.

\bibitem{Hirschfeldt.Shore:2007}
Denis~R. Hirschfeldt and Richard~A. Shore.
\newblock Combinatorial principles weaker than {R}amsey's theorem for pairs.
\newblock {\em J. Symbolic Logic}, 72(1):171--206, 2007.

\bibitem{Hirschfeldt.Shore.ea:2009}
Denis~R. Hirschfeldt, Richard~A. Shore, and Theodore~A. Slaman.
\newblock The atomic model theorem and type omitting.
\newblock {\em Trans. Amer. Math. Soc.}, 361(11):5805--5837, 2009.

\bibitem{Jockusch:1972.Ramsey}
Carl~G. Jockusch, Jr.
\newblock Ramsey's theorem and recursion theory.
\newblock {\em J. Symbolic Logic}, 37(2):268--280, 1972.

\bibitem{Kang:2013}
Xiaojun Kang.
\newblock Combinatorial principles between {$\operatorname{RRT}^2_2$} and
  {$\operatorname{RT}^2_2$}.
\newblock preprint.

\bibitem{Kurtz:81.thesis}
Stuart Kurtz.
\newblock {\em Randomness and Genericty in the degrees of unsolvability}.
\newblock PhD thesis, University of {I}llinios at {U}rbana-{C}hampaign, 1981.

\bibitem{Lerman.Solomon.ea:2013}
Manuel Lerman, Reed Solomon, and Henry Towsner.
\newblock Separating principles below {R}amsey's theorem for pairs.
\newblock {\em J. Math. Log.}, 13(2):1350007, 44, 2013.

\bibitem{Seetapun.Slaman:1995.Ramsey}
David Seetapun and Theodore~A. Slaman.
\newblock On the strength of {R}amsey's theorem.
\newblock {\em Notre Dame J. Formal Logic}, 36(4):570--582, 1995.
\newblock Special Issue: Models of arithmetic.

\bibitem{Simpson:2009.SOSOA}
Stephen~G. Simpson.
\newblock {\em Subsystems of {S}econd {O}rder {A}rithmetic}.
\newblock Perspectives in Logic. Cambridge University Press, Cambridge;
  Association for Symbolic Logic, Poughkeepsie, NY, second edition, 2009.

\bibitem{Soare:1987.book}
Robert~I. Soare.
\newblock {\em Recursively Enumerable Sets and Degrees}.
\newblock Perspectives in Mathematical Logic, Omega Series. Springer--Verlag,
  Heidelberg, 1987.

\bibitem{Wang:2013}
Wei Wang.
\newblock {R}ainbow {R}amsey {T}heorem for triples is strictly weaker than the
  {A}rithmetic {C}omprehension {A}xiom.
\newblock {\em Journal of Symbolic Logic}, 78(3):824--836, 2013.

\bibitem{Wang:2014}
Wei Wang.
\newblock Cohesive sets and rainbows.
\newblock {\em Annals of Pure and Applied Logic}, 165(2):389--408, 2014.

\bibitem{Wang:2014.ramseyan}
Wei Wang.
\newblock Some logically weak {R}amseyan theorems.
\newblock {\em Adv. Math.}, 261:1--25, 2014.

\end{thebibliography}

\end{document}